\newcommand{\nui}[1]{N(#1)}
\newtheorem{fed}{Definition}[section]
\newtheorem*{fed*}{Definition}
\newtheorem*{feds*}{Definitions}
\newtheorem{teo}[fed]{Theorem}
\newtheorem*{teo*}{Theorem}
\newtheorem{cor}[fed]{Corollary}
\newtheorem{pro}[fed]{Proposition}
\theoremstyle{definition}
\newtheorem{rem}[fed]{Remark}
\newtheorem*{rems*}{Remarks}
\newtheorem{exa}[fed]{Example}
\newtheorem{nota}[fed]{Notation}
\newtheorem{probs}[fed]{Problems}
\def\coma{\, , \, }
\def\py{\peso{and}}
\newcommand{\peso}[1]{ \quad \text{ #1 } \quad }
\def\n0{n_{ \text{\rm \tiny o}}}
\newcommand{\IN}[1]{\mathbb {I} _{#1}}
\def\suml{\sum\limits}
\def\bce{\begin{center}}
\def\ece{\end{center}}
\newcommand{\trivial}{\{0\}}
\def\py{\peso{and}}
\def\rk{\text{\rm rk}}
\def\noi{\noindent}
\def\cF{\mathcal F}
\def\cG{\mathcal G}
\def\QED{\hfill $\square$}
\def\EOE{\hfill $\triangle$}
\def\EOEP{\tag*{\EOE}}
\def\uno{\mathds{1}}
\def\bm{\left[\begin{array}}
\def\em{\end{array}\right]}
\def\ben{\begin{enumerate}}
\def\een{\end{enumerate}}
\def\bit{\begin{itemize}}
\def\eit{\end{itemize}}
\def\barr{\begin{array}}
\def\earr{\end{array}}
\def\igdef{\ \stackrel{\mbox{\tiny{def}}}{=}\ }
\def\la{\lambda}
\def\N{\mathbb{N}}
\def\R{\mathbb{R}}
\def\C{\mathbb{C}}
\def\I{\mathbb{I}}
\def\T{\mathbb{T}}
\def\G{\mathcal{G}}
\def\cH{\mathcal{H}}
\def\cM{{\cal M}}
\def\cB{{\cal B}}
\def\cV{{\cal V}}
\def\cU{{\cal U}}
\def\ca{\mathbf{a}}
\def\orto{^\perp}
\def\inc{\subseteq}
\def\sii{ if and only if }
\def\api{\langle}
\def\cpi{\rangle}
\def\da{^\downarrow}
 \DeclareMathOperator{\tr}{tr}
\DeclareMathOperator{\gen}{span}
\DeclareMathOperator{\leqp}{\leqslant}
\newcommand{\mat}{\mathcal{M}_d(\mathbb{C})}
\newcommand{\matsad}{\mathcal{H}(d)}
\newcommand{\matud}{\mathcal{U}(d)}
\newcommand{\matpos}{\mat^+}
\newcommand{\matrec}[1]{\mathcal{M}_{#1} (\mathbb{C})}
\def\beq{\begin{equation}}
\def\eeq{\end{equation}}
\def\pausa{\medskip\noi}
\def\Ax2{\,( S_{E(\cF)^\#_\cV})\hat{}_x }
\newcommand{\Id}{\I_d}
\newcommand{\tcal}{\T_{d}(\ca)}
\newcommand{\asubi}{\ca=(a_i)_{i\in\I_k}\,\in(\R^k_{>0})\da}
\newcommand{\asinsubi}{\ca \in(\R^k_{>0})\da}
\newcommand{\tnsa}{\Theta_{( N \coma  S\coma \ca)}}
\newcommand{\then}{\Rightarrow}
\newcommand{\pint}[1]{\langle #1 \rangle}
\newcommand{\corch}[1]{\left[ #1 \right]}
\newcommand{\llav}[1]{\left\{#1\right\}}
\newcommand{\norm}[1]{\left\|#1\right\|}
\def\nug0{\nu(\cG_0)}
\definecolor{rojo}{rgb}{1,0,0}
\definecolor{azul}{rgb}{0,0,1}
\begin{document}

\title{Generalized frame operator distance problems}
\author{ Pedro G. Massey $^{*}$, Noelia B. Rios $^{*}$ and Demetrio Stojanoff 
\footnote{Partially supported by CONICET 
(PIP 0150/14), FONCyT (PICT 1506/15) and  FCE-UNLP (11X681), Argentina. } \ 
 \footnote{ e-mail addresses: massey@mate.unlp.edu.ar, nbrios@mate.unlp.edu.ar, demetrio@mate.unlp.edu.ar}
\\ {\small CMaLP-FCE-UNLP
and IAM-CONICET, Argentina  }}
\date{}
\maketitle
\begin{abstract}
Let $S\in\matpos$ be a positive semidefinite $d\times d$ complex matrix and let $\ca=(a_i)_{i\in\I_k}\in \R_{>0}^k$, indexed by 
$\I_k=\{1,\ldots,k\}$, be a $k$-tuple of positive numbers. Let $\tcal$ denote the set of families $\cG=\{g_i\}_{i\in\I_k}\in (\C^d)^k$
such that $\|g_i\|^2=a_i$, for $i\in\I_k$; thus, $\tcal$ is the product of spheres in $\C^d$ endowed 
with the product metric. For a strictly convex unitarily invariant norm $N$ in $\mat$, we consider the generalized frame operator distance 
function $\tnsa$ defined on $\tcal$, given by $$\tnsa(\cG)=N(S-S_{\cG}) \peso{where} S_{\cG}=\sum_{i\in\I_k} g_i\,g_i^*\in\matpos\,.$$
In this paper we determine the geometrical and spectral structure of local minimizers $\cG_0\in\tcal$  of $\tnsa$. In particular,
we show that local minimizers are global minimizers, and that these families do not depend on the particular choice of $N$. 
\end{abstract}

\noindent  AMS subject classification: 42C15, 15A60.

\noindent Keywords: matrix approximation, unitarily invariant norms, majorization,
frame operator distance.

\tableofcontents

\section{Introduction}

Matrix approximation problems are ubiquitous in applications of matrix analysis. Following \cite{Hig} these problems can be briefly described
as follows: given $S\in\mat$, a complex matrix of size $d$, 
a matrix norm $N$ in $\mat$, 
and a set $\mathcal X\subset \mat$ then we search for the minimal distance 
$$ 
d_{N}(S,\mathcal X)=\min \{ N(S- A):\ A\in\mathcal X\} \ ,
$$ 
and for the best approximations
of $S$ from $\mathcal X$ (or nearest members in $\mathcal X$) 
$$ 
\mathcal A^{\rm op}_{N}(S\coma \mathcal X) 
=\{A\in\mathcal X:\ N(S-A)= d_{N}(S \coma \mathcal X)\}\ .
$$
Solving these problems, 
that are also known as matrix nearness or Procrustes problems in the literature
(see for example the recent text \cite{GB}, and the classic books of Bhatia \cite{Bhat} and Kato \cite{Ka})  
amounts to 
provide a characterization and, if possible, an explicit computation (in some cases sharp estimations) 
of $d_{N}(S \coma \mathcal X)$ and of the set of best approximations 
$\mathcal A^{\rm op}_{N}(S\coma \mathcal X)$. 
A typical choice for $N$ is the Frobenius norm (also called $2$-norm) since it is an euclidean 
norm (i.e. it is the norm associated with an inner product 
in $\mat$). Still, some other norms are also of interest such as weighted norms, the $p$-norms for $1\leq p$ (that contain the Frobenius norm), or the 
more general class of unitarily invariant norms. Some of the most important choices for $\mathcal X$ are the set of: selfadjoint matrices, 
positive semidefinite matrices, correlation matrices, orthogonal projections, oblique projections, matrices with rank bounded by a fix number 
(see \cite{CMR,Eld,Hig,Ki,Wat}).  

\pausa
Once the nearness problem above has been solved for some $S$, some set $\mathcal X$ and norm $N$ in $\mat$ then,
a natural proximity problem arises: for a fixed $A_0\in\mathcal X$, we search for (some sharp upper bound of) the distance 
$$ 
d_\mathcal X(A_0\coma  \mathcal A^{\rm op}_{N}(S\coma \mathcal X))
=\min\{d_\mathcal X(A_0\coma A):\ A\in \mathcal A^{\rm op}_{N}(S\coma \mathcal X)\}\ ,
$$
where $d_\mathcal X$ denotes a metric in $\mathcal X$. In case $\mathcal X$ can be endowed with a smooth structure that is compatible with
$d_\mathcal X$ and such that $\Psi(A)=N(A_0-A)$ is also a smooth function on $\mathcal X$, then estimations of 
$d_\mathcal X(A_0\coma  \mathcal A^{\rm op}_{N}(S\coma \mathcal X))$ can be obtained by 
applying gradient descent algorithms for $\Psi$ or by studying the evolution of the solutions of flows in $\mathcal X$ associated with the gradient of $\Psi$.

\pausa
Motivated by some optimization problems in finite frame theory, in \cite{dnp} we considered the following matrix nearness problem.
Fix an arbitrary positive semidefinite $S\in\matpos$ and a finite sequence of positive numbers $\ca=(a_i)_{i\in\I_k}\in (\R_{>0})^k$,
 indexed by $\I_k=\{1 \coma \ldots\coma k\}$; we considered the sets 
$$
\tcal=\left\{\{g_i\}_{i\in\I_k}\in (\C^d)^k: \, \|g_i\|^2=a_i \coma i\in\I_k\,\right\} \py
 \mathcal X_\ca=\left\{ \sum_{i\in\I_k} g_i\, g_i^*: \{ g_i\}_{i\in\I_k}\in\tcal \right\}\ .
$$
With this notation we solved the matrix nearness problem corresponding 
to $\mathcal X_\ca\subset \matpos$, for an 
arbitrary strictly convex unitarily invariant norm $N$ in $\mat$. That is, we obtained an explicit description
of $d_{N}(S\coma \mathcal X_\ca)=d_{N}(S\coma \ca)$ and 
$\mathcal A^{\rm op}_{N}(S\coma \mathcal X_\ca)=\mathcal A^{\rm op}_{N}(S\coma \ca)$.
We point out that the set $\mathcal X_\ca$ above can also be described 
as the set of frame operators $S_\mathcal G$ of finite sequences $\cG=\{g_i\}_{i\in\I_k}\in\tcal$ (see Section \ref{sec prelis} for details).

\pausa 
It is then natural to consider the proximity problem associated to the matrix nearness 
problem that we just described. Indeed, because of our initial motivation on this problem, 
we further pose the following (stronger) version: for $\cG_0\in\tcal$ search for a (sharp) upper bound 
of 
$$ 
d(\cG_0\coma \mathcal B^{\rm op}_{N}(S\coma \ca))
=\min\left\{d_{\tcal} (\cG_0\coma  \cG):\ \cG\in \mathcal B^{\rm op}_{N}(S\coma \ca) \right\} \ , 
$$
where 
$$
\mathcal B^{\rm op}_{N}(S\coma \ca)
=\left\{\cG\in\tcal:\ S_\cG\in \mathcal A^{\rm op}_{N}(S\coma \ca)\right\} 
\py
d_{\tcal}^{\, 2}(\cG_0\coma  \cG)=\sum_{i\in\I_k}\|g_i^0-g_i\|^2\ ,
$$ 
for $\cG_0=\{g_i^0\}_{i\in\I_k},\, \cG=\{g_i\}_{i\in\I_k}\in\tcal$.
That is, we shift our attention from frame operators $S_\cG$ to finite sequences $\cG\in\tcal$.
Notice that in the particular case $S=\frac{k}{d}\,I$, $a_i=1$ for $i\in\I_k$ and 
$N$  is the Frobenius norm,
 this problem is related with Paulsen's proximity problem \cite{BoC10,CC13,CFM12}, which is a central open problem in finite frame theory.

\pausa
In case the norm $N$ is sufficiently smooth, we could apply gradient descent algorithms to the function 
$\Theta=\Theta_{(N\coma S\coma\ca)}$ defined on
$\tcal$ - which is a smooth manifold in $(\C^d)^k$ - given by 
$\Theta(\cG)=N(S-S_{\cG})$, starting at $\cG_0$. Such an approach was considered by N. Strawn \cite{strawn,strawn2} for the Frobenius norm $N$.
Also, we could study the evolution of solutions of gradient flows as considered in \cite{LPS}.

\pausa
In the general case, the analysis of the behavior of gradient descent algorithms leads to the study 
the local behavior of the map $$\tcal\ni \cG=\{g_i\}_{i\in\I_k} \mapsto \Theta(\cG)=N(S-S_\cG)\peso{around} \cG_0\in\tcal\,.$$ 
One important issue is determining whether local minimizers of $\Theta$ 
(that are natural attractors of gradient descent algorithms) are actually global minimizers. 
In \cite{dnp} we settled this question in the affirmative for the Frobenius norm (thus solving a conjecture in \cite{strawn}), by relating 
frame operator distance problems in the Frobenius norm with frame completion problems for the Benedetto-Fickus frame potential introduced in \cite{BF}.
Unfortunately, the techniques used in \cite{dnp} do not apply for arbitrary $N$ (not even for $p$-norms with $p>1$, $p\neq 2$).
In the present work we tackle this problem and show 
that, in case $N$ is an arbitrary strictly convex u.i.n., local minimizers of $\Theta$ are characterized by a spectral condition that does not depend on $N$, but only
on $S$ and $\ca$. In particular, we conclude that local minimizers are global minimizers and do not depend on the particular choice of $N$. Our techniques
rely on majorization theory and Lidskii's local theorems for unitarily invariant norms obtained in \cite{dnp2}; indeed, in that paper
we showed that in some particular cases, local minimizers of the generalized frame operator distance (GFOD) functions 
(i.e. $\Theta(\cG)=N(S-S_\cG)$) are global minimizers. Based on the
features of these particular cases, we introduce the notion of co-feasible GFOD problems. Although in general GFOD problems are not co-feasible, this notion plays a crucial role in the study of the spectral structure of local minimizers.
Using that the map $\tcal\ni \cG\mapsto S_\cG\in\mathcal X_\ca$ is continuous, as a byproduct we obtain that local minimizers $S_{\cG_0}\in\mathcal X_\ca$ of the function 
$$ \Psi:\mathcal X_\ca\rightarrow \R_{\geq 0} \peso{given by} \Psi(S_{\cG})= N(S-S_\cG)$$ are global minimizers and do not depend on the choice of 
strictly convex u.i.n. $N$. This last fact is weaker than the result for the functions $\Theta$, since the continuous map $\tcal\ni \cG\mapsto S_\cG\in\mathcal X_\ca$
does not have local cross sections around an arbitrary $\cG_0\in\tcal$.

\pausa
The paper is organized as follows. In Section \ref{sec prelis} we include some preliminary material on matrix analysis and finite frame theory that 
is used throughout the paper. In Section \ref{sec tris} we state our main problem namely, the study of the geometrical and spectral structure of local minimizers of the GFOD functions (i.e. $\Theta$ as above), 
associated to a strictly convex unitarily invariant norm. 
 We begin by  obtaining a series of results related with what we call the inner structure of such local minimizers. 
In section \ref{sec cuatris} we state our main results namely, that local minimizers of GFOD functions are
global minimizers, and give an algorithmic construction of the eigenvalues of such families.
Finally, in Section \ref{Appendixity} we give detailed proofs of some results stated in Section \ref{sec tris}. 

\section{Preliminaries}\label{sec prelis}

In this section we introduce the notation, terminology and results from matrix analysis (see the text \cite{Bhat})
and finite frame theory (see the texts \cite{TaF,FinFram, Chr}) that we will use throughout the 
paper.

\subsection{Matrix Analysis}\label{prel mat an}

\pausa
{\bf Notation and terminology}. We let $\mathcal M_{k,d}(\C)$ be the space of complex $k\times d$ matrices and write $\mathcal M_{d,d}(\C)=\mat$ for the algebra of $d\times d$ complex matrices. We denote by $\matsad\subset \mat$ the real subspace of selfadjoint matrices and by $\matpos\subset \matsad$ the cone of positive 
semi-definite matrices. We let $\matud\subset \mat$ denote the group of unitary matrices.

\pausa
For $d\in\N$, let $\I_d=\{1,\ldots,d\}$. 
Given a vector $x\in\C^d$ we denote by $D_x$ the diagonal matrix in $\mat$ whose main diagonal is $x$.
Given $x=(x_i)_{i\in\I_d}\in\R^d$ we denote by $x\da=(x_i\da)_{i\in\I_d}$ the vector obtained by 
rearranging the entries of $x$ in non-increasing order. We also use the notation
$(\R^d)\da=\{x\in\R^d\ :\ x=x\da \}$ and $(\R_{\geq 0}^d)\da=
\{x\in\R_{\geq 0}^d\ :\ x=x\da \}$. For $r\in\N$, we let $\uno_r=(1,\ldots,1)\in\R^r$.

\pausa
 Given a matrix $A\in\matsad$ we denote by $\la(A)=\la\da(A)=(\la_i(A))_{i\in\I_d}\in (\R^d)\da$ 
the eigenvalues of $A$ counting multiplicities and arranged in 
non-increasing order.   
For $B\in\mat$ we let $s(B)=\la(|B|)$ denote the singular values of $B$, i.e. the eigenvalues of $|B|=(B^*B)^{1/2}\in\matpos$; we also let $\sigma(B)\subset \C$ denote the spectrum of $B$.
If $x,\,y\in\C^d$ we denote by $x\otimes y=x\,y^*\in\mat$ the rank-one matrix given by $(x\otimes y) \, z= \langle z\coma y\rangle \ x$, for $z\in\C^d$.

\pausa Next we recall the notion of majorization between vectors, that will play a central role throughout our work.
\begin{fed}\rm 
Let $x\in\R^k$ and $y\in\R^d$. We say that $x$ is
{\it submajorized} by $y$, and write $x\prec_w y$,  if
$$
\suml_{i=1}^j x^\downarrow _i\leq \suml_{i=1}^j y^\downarrow _i \peso{for every} 1\leq j\leq \min\{k\coma d\}\,.
$$
If $x\prec_w y$ and $\tr x = \sum_{i=1}^kx_i=\sum_{i=1}^d y_i = \tr y$,  then $x$ is
{\it majorized} by $y$, and write $x\prec y$.
\end{fed}

\begin{rem}\label{desimayo}
\pausa Given $x,\,y\in\R^d$ we write
$x \leqp y$ if $x_i \le y_i$ for every $i\in \mathbb I_d \,$.  It is a standard  exercise 
to show that: 
\begin{enumerate}
\item $x\leqp y \implies x^\downarrow\leqp y^\downarrow  \implies x\prec_w y $. 
\item $x\prec y\implies |x|\prec_w|y|$, where $|x|=(|x_i|)_{i\in\I_d}\in\R_{\geq 0}^d$.
\item $x\prec y $ and $ |x|\da=|y|\da \implies x\da=y\da$. 
\item $x\prec y$ and $z\prec w\in \R^e \implies  (x,z)\prec (y,w)\in\R^{d+e}$. 
\EOE\end{enumerate}
\end{rem}

\pausa 
Although majorization is not a total order in $\R^d$, there are several fundamental inequalities in 
matrix theory that can be described in terms of this relation. As an example of this phenomenon we can consider 
Lidskii's (additive) inequality (see \cite{Bhat}). In the following result we also include 
the characterization of the case of equality obtained in \cite{mrs2}.


\begin{teo}[Lidskii's inequality]\label{mrs284}\rm
Let $A,\,B\in \matsad$. 
Then 
\ben
\item $\la(A)-\la(B) \prec \la(A-B)$.
\item $\la(A-B)=\big(\,\la(A)-\la(B)\,\big)^{\downarrow}$ if and only if there exists 
$\{v_i\}_{i\in\I_d}$ an ONB of $\C^d$ such that 
\beq
A=\sum_{i\in\I_d}\la_i (A) \ v_i\otimes v_i \py B=\sum_{i\in\I_d}\la_i (B)\ v_i\otimes v_i\  .
\eeq
Notice that in this case, $A$ and $B$ commute. \qed
\een 
\end{teo}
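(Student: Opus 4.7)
The plan is to prove part (1) by reducing Lidskii's inequality to Ky Fan's maximum principle via a smooth interpolation between $A$ and $B$, and then to extract the equality case in part (2) by saturating every step of that reduction. The trace identity
$$\sum_{i\in\I_d}\la_i(A)-\sum_{i\in\I_d}\la_i(B)=\tr(A-B)=\sum_{i\in\I_d}\la_i(A-B)$$
handles the equality at $k=d$ required in the definition of majorization. For the remaining partial sums I would use the dual description $\sum_{i=1}^k x_i\da=\max\{\sum_{i\in I}x_i\,:\,I\inc\I_d\coma\,|I|=k\}$, which reduces matters to showing $\sum_{i\in I}(\la_i(A)-\la_i(B))\leq\sum_{j=1}^{|I|}\la_j(A-B)$ for every $I\inc\I_d$.

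Setting $C=A-B$ and $A_t=B+tC$ for $t\in[0\coma 1]$, I would first treat the generic case in which $A_t$ has simple spectrum for almost every $t$. By Rellich's theorem, on the complement of a finite set the eigenvalues $\la_i(A_t)$ are real-analytic in $t$ with analytic orthonormal eigenvectors $v_i(t)$, and the Hellmann--Feynman formula gives $\frac{d}{dt}\la_i(A_t)=\langle Cv_i(t)\coma v_i(t)\rangle$. Integrating in $t$ and summing over $i\in I$ yields
$$\sum_{i\in I}\bigl(\la_i(A)-\la_i(B)\bigr)=\int_0^1\sum_{i\in I}\langle Cv_i(t)\coma v_i(t)\rangle\,dt\,.$$
For each $t$ the family $\{v_i(t)\}_{i\in I}$ is orthonormal of size $|I|$, so Ky Fan's principle bounds the integrand by $\sum_{j=1}^{|I|}\la_j(C)$, and integrating gives part (1). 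A density argument in the pair $(A\coma B)$ removes the simple-spectrum assumption.

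For part (2), the implication $(\Leftarrow)$ is immediate: if $A$ and $B$ share the orthonormal basis $\{v_i\}_{i\in\I_d}$ as diagonalizing eigenvectors with the ordering in the statement, then $A-B$ is diagonal in the same basis with diagonal entries $\la_i(A)-\la_i(B)$, whose decreasing rearrangement is exactly $(\la(A)-\la(B))\da=\la(A-B)$. The converse is the main obstacle and is precisely the content of \cite{mrs2}. The strategy is to saturate the estimate above: if $\la(A-B)=(\la(A)-\la(B))\da$, then for each $k\in\I_d$ and the index set $I_k$ realizing the $k$ largest values of $\la_i(A)-\la_i(B)$, the integral identity together with the pointwise Ky Fan bound forces
$$\sum_{i\in I_k}\langle Cv_i(t)\coma v_i(t)\rangle=\sum_{j=1}^k\la_j(C) \peso{for a.e.} t\in[0\coma 1]\,.$$
The equality case of Ky Fan then requires $\gen\{v_i(t):i\in I_k\}$ to coincide with the span of the top $k$ eigenvectors of $C$; letting $k$ vary produces a common orthonormal eigenbasis for $A$, $B$ and $C$ at those $t$, and specializing to $t=0$ and $t=1$ gives the conclusion. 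The main technical hurdle—and the reason the equality case cannot be read off by a direct Ky Fan argument—is the treatment of degenerate eigenvalues of $A_t$: the analytic branches furnished by Rellich's theorem are not canonical inside an eigenspace, and organising them so that both $A$ and $B$ are simultaneously diagonalized with eigenvalues in non-increasing order requires the careful reordering carried out in \cite{mrs2}.
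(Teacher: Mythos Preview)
The paper does not actually prove Theorem \ref{mrs284}: it is stated as a background result, with part (1) attributed to the classical literature (see \cite{Bhat}) and the equality case in part (2) attributed to \cite{mrs2}. So there is no in-paper argument to compare against; what follows is an assessment of your sketch on its own merits.

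Your proof of part (1) is one of the standard routes and is correct as written: Rellich's analytic perturbation theory plus the Hellmann--Feynman identity plus Ky Fan, with a density argument to dispose of persistent degeneracies of $A_t$. This is fine.

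For part (2), the $(\Leftarrow)$ direction is indeed immediate. Your outline of $(\Rightarrow)$ has the right flavor---saturate the Ky Fan bound in the integral representation---but, as you yourself note, it is not a proof without the work in \cite{mrs2}, and there are two places where the sketch as written would break if taken literally. First, the density argument you invoke for part (1) cannot be reused here: passing to a limit does not preserve equality cases, so for $(\Rightarrow)$ you must run the integral formula for the \emph{given} $A,B$, which means working with Rellich's analytic (not ordered) eigenbranches and tracking the permutations at crossings. Second, the line ``specializing to $t=0$ and $t=1$ gives the conclusion'' is not right as stated: the families $\{v_i(0)\}$ and $\{v_i(1)\}$ are a priori different orthonormal bases (one diagonalizes $B$, the other $A$), so you do not get a single common basis by evaluating at the endpoints. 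What actually happens in the non-degenerate case is that the Ky Fan saturation pins each $v_i(t)$ to a fixed eigenvector of $C$ for a.e.\ $t$, forcing $v_i(\cdot)$ to be constant (up to phase), and \emph{that} constant family is the common ordered eigenbasis. Making this precise when $C$ has repeated eigenvalues, and checking that the resulting basis orders both $\la(A)$ and $\la(B)$ non-increasingly, is exactly the ``careful reordering'' you defer to \cite{mrs2}. Since the paper likewise cites \cite{mrs2} for this equality case, your treatment is no less complete than the paper's---but it is a sketch, not a self-contained proof.
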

\pausa
Recall that a norm $N$ in $\mat$ is {\bf unitarily invariant} (briefly u.i.n.) if 
$$ 
\nui{UAV}=\nui{A} \peso{for every} A\in\mat \py U,\,V\in\matud\ ,
$$
and $N$ is {\bf strictly convex} if its restriction to diagonal matrices is a strictly convex  norm in $\C^d$. 
Examples of u.i.n. are the spectral norm $\|\cdot\|$ and the $p$-norms $\|\cdot\|_p$, for $p\geq 1$
(strictly convex if $p>1$).
It is well known that (sub)majorization relations between singular values of matrices are intimately related 
with inequalities with respect to u.i.n's. 
The following result summarizes these relations (see for example \cite{Bhat}):

\begin{teo}\label{teo intro prelims mayo}\rm
Let $A,\,B\in\mat$ be such that $s(A)\prec_w s(B)$. Then:
\ben 
\item For every u.i.n. $N$ in $\mat$
we have that $N(A)\leq N(B)$.
\item If $N$ is a strictly convex u.i.n. in $\mat$ 
and $N(A)=N(B)$, then  $s(A)=s(B)$.\qed
\een

\end{teo}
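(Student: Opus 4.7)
The plan is to reduce both statements to the corresponding ones for the symmetric gauge function associated with $N$, and then use a Birkhoff decomposition together with strict convexity to handle the equality case.

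\smallskip
First I would invoke Ky Fan's theorem: every unitarily invariant norm $N$ on $\mat$ has the form $N(A)=\Phi(s(A))$ for a unique symmetric gauge function $\Phi\colon\R^d\to\R_{\geq 0}$, i.e.\ a norm invariant under coordinate permutations and under sign changes (hence monotone in absolute values of the coordinates). Since the restriction of $N$ to diagonal matrices is precisely $\Phi$ acting on $\C^d$, the strict convexity of $N$ in the sense of the paper is equivalent to the strict convexity of $\Phi$. Thus it suffices to prove: if $x,y\in\R_{\geq 0}^d$ satisfy $x\prec_w y$, then $\Phi(x)\leq \Phi(y)$, and moreover if $\Phi$ is strictly convex and $\Phi(x)=\Phi(y)$, then $x\da=y\da$.

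\smallskip
For item 1 I would use the classical characterization (Bhatia, Ch.\ II): $x\prec_w y$ with $x,y\geq 0$ is equivalent to the existence of $z\in\R_{\geq 0}^d$ with $x\leq z$ coordinatewise and $z\prec y$. By Birkhoff's theorem, $z=\sum_\sigma t_\sigma P_\sigma y$ is a convex combination of permutations of $y$. Chaining monotonicity of $\Phi$ in absolute values (to obtain $\Phi(x)\leq \Phi(z)$) with convexity and permutation invariance (to obtain $\Phi(z)\leq \sum_\sigma t_\sigma\Phi(P_\sigma y)=\Phi(y)$) yields $\Phi(x)\leq \Phi(y)$, and applying this to $x=s(A)$, $y=s(B)$ proves item 1.

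\smallskip
For item 2, the equality $\Phi(x)=\Phi(y)$ forces both inequalities in the chain to be equalities. Strict convexity of $\Phi$ applied to the identity $\Phi(z)=\sum_\sigma t_\sigma\Phi(P_\sigma y)$ forces all $P_\sigma y$ with $t_\sigma>0$ to coincide, so $z$ is itself a permutation of $y$ and in particular $z\da=y\da$. To conclude $x=z$ from $0\leq x\leq z$ and $\Phi(x)=\Phi(z)$, I would argue by contradiction: if $x\neq z$, then $m=(x+z)/2$ satisfies $m\geq x$ coordinatewise, so $\Phi(m)\geq \Phi(x)$ by monotonicity of $\Phi$, while strict convexity gives $\Phi(m)<\tfrac12(\Phi(x)+\Phi(z))=\Phi(x)$, a contradiction. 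Hence $x=z$, and so $x\da=y\da$, i.e.\ $s(A)=s(B)$.

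\smallskip
The main obstacle is the equality case: the Birkhoff step only supplies a majorizer $z$ of $x$, so strict convexity has to be invoked twice, first to collapse the convex combination of permutations and then, via the midpoint argument, to collapse the coordinatewise inequality $x\leq z$. Everything else is routine once Ky Fan's correspondence between u.i.n.'s and symmetric gauge functions is in place.
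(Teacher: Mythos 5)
The paper does not prove this theorem; it is stated with a reference to Bhatia's \emph{Matrix Analysis} and closed with \qed, as a summary of standard facts. Your argument is correct and is essentially the textbook proof that the cited reference supplies: von Neumann's correspondence $N(A)=\Phi(s(A))$ with a symmetric gauge function, the characterization of weak majorization via an intermediate $z$ with $x\leq z\prec y$, Birkhoff's theorem, monotonicity and convexity of $\Phi$ for item 1, and two applications of strict convexity (to collapse the Birkhoff combination, and the midpoint argument to upgrade $x\leq z$ to $x=z$) for item 2. One small point worth making explicit: the paper's definition of strict convexity concerns the restriction of $N$ to complex diagonal matrices, i.e.\ the map $x\mapsto\Phi(|x|)$ on $\C^d$; your reduction only uses strict convexity of $\Phi$ on $\R_{\geq 0}^d$, which does follow (the midpoint of two nonnegative vectors stays in $\R_{\geq 0}^d$, where $\Phi(|\cdot|)$ agrees with $\Phi$), so the argument is sound, but writing "equivalent to strict convexity of $\Phi$" on all of $\R^d$ is slightly stronger than what you verify or need.
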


\subsection{Finite frames}
We consider some notions and results from the theory of finite frames. In what follows we adopt: 

\pausa
{\bf Notation and terminology}: let $\cF=\{f_i\}_{i\in\I_k}$ be a finite sequence in $\C^d$. Then,
\ben
\item $T_\cF\in \cM_{d,k}(\C)$ denotes the synthesis operator of $\cF$ given by $T_\cF\cdot(\alpha_i)_{i\in\I_k}=\sum_{i\in\I_k}\alpha_i\, f_i$.
\item $T_\cF^*\in \cM_{k,d}(\C)$ denotes the analysis operator of $\cF$ and it is given by $T_\cF^*\cdot f=(\langle f,f_i\rangle)_{i\in\I_k}$.
\item  $S_\cF\in \matpos$ denotes the frame operator of $\cF$ and it is given by $S_\cF=T_\cF\,T_\cF^*$. Hence, 
\beq\label{el SF}
S_\cF =\sum_{i\in\I_k} f_i\otimes f_i 
\py  R(S_\cF) =  \gen\{f_i : i \in \I_k\} \,.
\eeq 

\item We say that $\cF$ is a frame for $\C^d$ if it spans $\C^d$; equivalently, $\cF$ is a frame for $\C^d$ if $S_\cF$ is a positive invertible operator acting on $\C^d$.
\EOE\een 

\pausa
Hence, in case $\cF=\{f_i\}_{i\in\I_k}$ is a frame for $\C^d$ we get the so-called canonical reconstruction formulas: for $x\in\C^d$,
$$ 
x=\sum_{i\in\I_k} \langle x\coma  S_\cF^{-1}f_i\rangle \, f_i =\sum_{i\in\I_k} 
\langle x\coma  f_i\rangle \, S_\cF^{-1}f_i\ .$$
In several applications of finite frame theory, it is important to construct families
$\cF=\{f_i\}_{i\in\I_k}\in (\C^d)^k$ in such a way that the frame operator $S_\cF$ and the squared norms $(\|f_i\|^2)_{i\in\I_k}$
are prescribed in advance. This problem is known as the frame design problem, and its solution can be obtained in 
terms of the Schur-Horn theorem for majorization.

\begin{teo}[See \cite{Illi}]\label{teo SH para marcos}
Let $S\in \matpos$ and let $\ca=(a_i)_{i\in\I_k}\in (\R_{>0}^k)$. Then, the following statements are equivalent:
\ben
\item There exists $\cF=\{f_i\}_{i\in\I_k}\in (\C^d)^k$ such 
that $S_\cF=S$ and $\|f_i\|^2=a_i\,$, for $i\in\I_k\,$;
\item $\ca\prec \la(S)$.	
\qed\een
\end{teo}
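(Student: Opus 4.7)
The result is essentially a translation of the classical Schur--Horn theorem for positive semidefinite matrices (diagonals are majorized by eigenvalues, and any such majorization is realized) into the frame-theoretic language via the synthesis/Gram duality. The plan is to relate the frame operator $S_\cF=T_\cF T_\cF^*\in\matpos$ to the Gram matrix $G_\cF=T_\cF^* T_\cF\in\matn^+$ and exploit the two basic facts that (i) $S_\cF$ and $G_\cF$ share the same nonzero spectrum, so $\la(G_\cF)$ is obtained from $\la(S)$ by padding with zeros (or truncating, depending on whether $k\ge d$ or $k<d$), and (ii) the diagonal entries of $G_\cF$ are exactly $(\|f_i\|^2)_{i\in\I_k}=(a_i)_{i\in\I_k}$.

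For the implication $(1)\Rightarrow(2)$, I would fix $\cF=\{f_i\}_{i\in\I_k}$ with $S_\cF=S$ and $\|f_i\|^2=a_i$, form $G_\cF$, and apply the classical Schur--Horn theorem: the diagonal of a PSD matrix is majorized by its eigenvalues, giving $\ca\prec\la(G_\cF)$. The trace identity $\sum_i a_i=\tr(G_\cF)=\tr(S)$ guarantees the equality of sums needed for the majorization $\ca\prec\la(S)$, since appending or removing zero entries does not change majorization relations by Remark~\ref{desimayo}.

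For the converse $(2)\Rightarrow(1)$, I would invoke the surjectivity part of the Schur--Horn theorem to produce $G\in\matn^+$ with diagonal $\ca$ and eigenvalue list equal to $\la(S)$ padded with $\max\{k-d,0\}$ zeros; this is legitimate precisely because $\ca\prec\la(S)$ (and the padded vector still majorizes $\ca$ with matching trace). Since $\rk(G)\le d$, I can factor $G=T^*T$ with $T\in\cM_{d,k}(\C)$, and then $TT^*\in\matpos$ has the same nonzero eigenvalues as $G$, hence the same eigenvalue list as $S$. Choose $U\in\matud$ with $TT^*=U\,S\,U^*$ and set $T_\cF=U^*T$, so that $S_\cF=T_\cF T_\cF^*=U^*(TT^*)U=S$. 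The columns $f_i$ of $T_\cF$ satisfy $\|f_i\|^2=\|U^*T_{:,i}\|^2=\|T_{:,i}\|^2=G_{ii}=a_i$, as required.

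The only mildly delicate point is the bookkeeping between vectors of different lengths (the tuple $\ca\in\R^k$ versus $\la(S)\in\R^d$): one has to interpret the majorization $\ca\prec\la(S)$ consistently with the zero-padding convention used when passing between $S$ and $G$, and check in both directions that the trace condition is automatic. Beyond that, the argument is a direct reduction to the matrix Schur--Horn theorem via the synthesis/analysis operator identities recalled at the beginning of Section~\ref{sec prelis}.
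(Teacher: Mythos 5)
The paper does not prove this statement; it is quoted from \cite{Illi} (note the ``See \cite{Illi}'' in the header and the \textsf{qed} placed directly after the statement), so there is no in-paper argument to compare against. Your reduction is the standard one and is essentially correct: pass to the Gram matrix $G_\cF=T_\cF^*T_\cF$, use that $\mathrm{diag}(G_\cF)=\ca$ and that $G_\cF$ and $S_\cF$ share the nonzero spectrum, and invoke the classical Schur--Horn theorem in both directions; this is also the route taken in the cited reference. Two small bookkeeping points. First, in the step ``produce $G\in\matn^+$ with diagonal $\ca$ and eigenvalue list equal to $\la(S)$ padded with $\max\{k-d,0\}$ zeros,'' the target matrix $G$ lives in $\cM_k(\C)^+$ (not $\matn^+$, whose index $n$ is unrelated here), so its eigenvalue list must have exactly $k$ entries. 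When $k\ge d$ the padding description is correct, but when $k<d$ you must \emph{truncate} $\la(S)$ to $(\la_1(S),\ldots,\la_k(S))\in(\R_{\ge0}^k)\da$. This truncation is harmless: $\ca\prec\la(S)$ with $\tr\ca=\tr S$ and $\la(S)\in(\R_{\ge0}^d)\da$ forces $\sum_{i>k}\la_i(S)\le 0$, hence $\la_i(S)=0$ for $i>k$, so nothing is lost and the trace still matches. You do flag the truncation option in your opening paragraph, so this is a slip in phrasing rather than a gap. Second, factoring $G=T^*T$ with $T\in\cM_{d,k}(\C)$ is legitimate precisely because $\rk G\le d$ (take the compact factorization into $\rk G$ rows and pad with zero rows up to $d$); it is worth saying this explicitly since for $k\ge d$ the na\"ive Cholesky-type factor would be $k\times k$.
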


\section{Generalized frame operator distance functions}\label{sec tris}

In this section we state our main problem namely, the study of the geometrical and spectral structure
of local minimizers of generalized frame operator distance (GFOD) functions. After recalling some preliminary results
from \cite{dnp2}, we obtain a description of what we call the inner structure of local minimizers of GFOD's functions.
Since the proofs of some results in this section 
are quite technical, they are developed in Section \ref{Appendixity}.

\subsection{Statement of the problem and related results}

Let $S\in\matpos$ and $\asubi$. In this case we consider the torus 
$$
\tcal \igdef \llav{ \cG=\{g_i\}_{i\in\I_k} \in  (\C^d)^{k} \  : 
\norm{g_{i}}^2=a_i  \,\mbox{, for every }   i \in \IN{k}}\  . 
$$ 
By definition, $\tcal$ is the (cartesian) product of spheres in $\C^d$; 
we endow $\tcal$ with the product metric of the Euclidean metrics in each of these spheres, namely 
$$
d^2(\cG\coma \cG')=\sum_{i\in\I_k}\|g_i-g_i'\|^2  \peso{for} 
\cG=\{g_i\}_{i\in\I_k},\, \cG'=\{g_i'\}_{i\in\I_k}\in\tcal\ .
$$  
Thus, $\tcal$ is a compact smooth manifold.
Given a strictly convex u.i.n $N:\mat\rightarrow  \R_{\geq 0}$, we can consider 
the generalized frame operator distance (G-FOD) in $\tcal$ (see \cite{dnp}) given by
$$
\Theta_{(N\coma S \coma \ca)}=\Theta: \tcal \rightarrow \R_{\geq0} \peso{given by} \Theta(\cG)=N(S-S_{\cG})\ ,
$$
where $S_\cG=\sum_{i\in\I_k} g_i\otimes g_i$ denotes the frame operator of a family $\cG\in\tcal$. 
This notion is based on the frame operator distance (FOD) $\Theta_{(\|\cdot\|_2\coma S\coma \ca )}$ 
introduced by Strawn in \cite{strawn}, where
$\|A\|_2^2=\tr(A^*A)$ denotes the Frobenius norm, $A\in\mat$. 
Based on his work and on numerical evidence, Strawn conjectured that local minimizers of 
$\Theta_{(\|\cdot\|_2\coma S\coma \ca )}$ 
were also global minimizers. In \cite{dnp} we settled Strawn's conjecture in the affirmative, by relating 
FOD problems in the norm $\|\cdot\|_2$ with optimal frame completion problems for the Benedetto-Fickus frame potential.
It is then natural to ask whether local minimizers of the G-FOD $\Theta_{(N\coma S\coma \ca )}$ are also global minimizers, where
$N$ denotes an arbitrary strictly convex u.i.n. on $\mat$ (e.g. $p$-norms, with $p\in(1,\infty)$). Unfortunately, the techniques used in \cite{dnp} do not apply in this general case, leaving untouched the 
following 
\begin{probs}
Let $S\in\matpos$, $\asubi$ and fix a strictly convex u.i.n. $N$ on $\mat$. Then
\begin{itemize}
\item[P1.] Compute the spectral and geometrical structure of local minimizers of $\Theta_{(N\coma S\coma \ca )}$ 
in $\tcal$.
\item[P2.] Determine whether local minimizers are global minimizers of $\Theta_{(N\coma S\coma \ca )}$ 
in $\tcal$.
\item[P3.] Determine whether these minimizers depend on the chosen u.i.n.
\qed\end{itemize}
\end{probs}

\pausa 
In what follows we completely solve the three problems above in an algorithmic way, thus settling in the affirmative 
the questions in P2. and P3. (see Theorem \ref{si kmenord} in Section \ref{sec kmenord}).

\pausa
Next, we recall some results from 
\cite{dnp2} that we use throughout our work.

\begin{teo}[See \cite{dnp2}]\label{teo applic1} \rm
Fix $S\in\matpos$,  $\asubi$, 
and a strictly convex u.i.n. $N$ on $\mat$. 
Consider the map 
$\tnsa=\Theta: \tcal \rightarrow \R_{\geq0}$ given by $\Theta(\cG)=N(S-S_{\cG})$.

\pausa
Fix a local minimizer $\cG_0=\llav{g_i}_{i\in\I_k}\in\tcal$ of $\tnsa$, with frame operator 
$S_0=S_{\cG_0}\,$. Denote by  $W=R(S_0)=\text{span}\{g_i:\ i\in\I_k\} \inc \C^d$. 
Then,
\begin{enumerate}
\item There exists $\mathcal B=\{v_i\}_{i\in\I_d}$ an ONB of $\C^d$ such that 
$$
S=\sum_{i\in\I_d} \la_i(S) \ v_i\otimes v_i
\py  S_0=\sum_{i\in\I_d} \la_i(S_0) \ v_i\otimes v_i  
\,.
$$ 
In particular, we have that $\la(S-S_0)=\big(\la(S)-\la(S_0)\,\big)\da$.
\item The subspace $W$ reduces $S-S_0\in\cH(d)$; hence, $D\igdef (S-S_0)|_W\in L(W)$ verifies $D^*=D$.
\item All vectors $g_i$ ($i\in\I_k$) are eigenvectors of $D$ and $S-S_0\,$. 
\item Let $\sigma(D)=\{c_1\coma \ldots\coma c_p\}$ 
be such that $c_1<c_2<\ldots<c_p\,$. Denote by 
$$
J_j=\{\ell \in \I_k: \ D\,g_\ell=c_j\, g_\ell\} \py 
W_j=\text{span}\{g_\ell:\ \ell\in J_j\} \peso{for} j\in\I_p\ .$$
Then the subspaces $W_j$ reduces both 
$S$ and $S_0$, for $j\in\I_p\,$. Moreover, 
\beq\label{Wj ort}
\I_k = 
\stackrel{\text{D}}{\bigcup_{j\in\I_p}} \ J_j  \peso{(disjoint union) \ \ and} W=\bigoplus_{j\in\I_p} W_j
\ .
\eeq
\item If $j\in \I_p$ and $c_j \neq \max \sigma(S-S_0)$ (for example, when $1\leq j<p$), then 
the family $\{g_\ell\}_{\ell\in J_j}$ is linearly independent. 
\qed\end{enumerate}
\end{teo}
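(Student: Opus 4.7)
The plan is to reduce the five claims to the local Lidskii theorems for strictly convex unitarily invariant norms from \cite{dnp2}, applied at two different scales: once to the full unitary orbit of $\cG_0$, and once to rank-two perturbations of a single $g_i$.

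For (1), $\tcal$ is invariant under the diagonal unitary action $U\cdot\cG=\llav{Ug_i}_{i\in\I_k}$, whose induced effect on frame operators is $S_\cG\mapsto US_\cG U^*$. Local minimality of $\cG_0$ then forces $I\in\matud$ to be a local minimizer of $U\mapsto N(S-US_0U^*)$, and the local Lidskii theorem of \cite{dnp2} forces $S$ and $S_0$ to share an ONB $\cB=\llav{v_i}_{i\in\I_d}$ that diagonalizes each of them in decreasing order of eigenvalues; Theorem \ref{mrs284} then yields $\la(S-S_0)=\bigl(\la(S)-\la(S_0)\bigr)\da$. For (2), this simultaneous diagonalization gives $[S,S_0]=0$, so $S-S_0$ commutes with $S_0$ and therefore preserves $R(S_0)=W$. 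Since $S-S_0$ is selfadjoint, preservation of $W$ upgrades to reduction of $W$, making $D=(S-S_0)|_W$ selfadjoint.

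For (3), fixing $i\in\I_k$ and varying only $g_i$ along its sphere $\llav{h\in\C^d : \|h\|^2=a_i}$ gives a rank-two path of frame operators, $S_{\cG(h)}=S_0-g_i\otimes g_i+h\otimes h$; local minimality of $\cG_0$ then puts a local minimum of $h\mapsto N\bigl((S-S_0)+g_i\otimes g_i-h\otimes h\bigr)$ at $h=g_i$, and the Lidskii-type local theorem from \cite{dnp2} applied to this rank-two perturbation forces $g_i$ to be an eigenvector of $S-S_0$, and since $g_i\in W$, of $D$. Claim (4) is then a bookkeeping consequence: the $W_j$'s are contained in the mutually orthogonal eigenspaces $\ker(D-c_j I_W)$ and $\sum_j W_j=W$, so equality $W_j=\ker(D-c_j I_W)$ follows and $W=\bigoplus_{j\in\I_p}W_j$. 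Each $W_j$ is $D$-invariant by construction, $S_0$-invariant because $[D,S_0]=0$, hence $S$-invariant since $S=(S-S_0)+S_0$; together with the fact that $W^\perp$ reduces both operators, each $W_j$ reduces $S$ and $S_0$ on $\C^d$.

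For (5), assume $c_j\neq\max\sigma(S-S_0)$, fix $c_{j'}>c_j$, and suppose for contradiction that $\llav{g_\ell}_{\ell\in J_j}$ is linearly dependent. A nonzero relation $\sum_{\ell\in J_j}\alpha_\ell g_\ell=0$ provides the slack to build a curve $\cG(t)\in\tcal$ with $\cG(0)=\cG_0$ that transfers a small positive amount of squared-norm mass from $W_j$ to $W_{j'}$ while leaving the other $g_\ell$ fixed and preserving each sphere constraint. To leading nonvanishing order, this replaces one eigenvalue $c_{j'}$ of $S-S_0$ by $c_{j'}-\eps(t)$ and one eigenvalue $c_j$ by $c_j+\eps(t)$, with $\eps(t)>0$ for $t$ small; since $c_{j'}>c_j$, this mean-preserving contraction strictly majorizes the spectrum of $S-S_{\cG(t)}$ by that of $S-S_0$, hence $s(S-S_{\cG(t)})\prec_w s(S-S_0)$ strictly, and Theorem \ref{teo intro prelims mayo} yields $N(S-S_{\cG(t)})<N(S-S_0)$, contradicting local minimality. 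The main obstacle is exactly this construction: one must explicitly engineer the perturbation using the linear relation so that all sphere constraints hold exactly, the harmful first-order cross terms cancel, and the surviving leading-order effect on $S_{\cG(t)}$ is a clean transfer of mass between the correct blocks. Verifying that the resulting change in the spectrum yields a strict submajorization of singular values (in particular, handling cases where some $c_j$ are negative) requires careful bookkeeping with the block decomposition of (4); these technical details are the reason for postponing proofs to Section \ref{Appendixity}.
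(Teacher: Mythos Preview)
The paper does not give a proof of this theorem at all: it is quoted verbatim from \cite{dnp2} and closed with a \qed. So there is no ``paper's own proof'' to compare against; what you have written is a plausible reconstruction of the argument that presumably appears in \cite{dnp2}. Your outline for items (1)--(4) is sound: the diagonal $\matud$-action on $\tcal$ reduces (1) to the local Lidskii theorem of \cite{dnp2}, (2) is an immediate consequence of $[S,S_0]=0$, the single-vector variation in (3) is exactly a unitary-orbit perturbation of a rank-one operator (since $\{h\otimes h:\|h\|^2=a_i\}$ is the $\matud$-orbit of $g_i\otimes g_i$), so local Lidskii again applies and forces $g_i$ to be an eigenvector, and (4) is bookkeeping.

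Two remarks, one editorial and one mathematical. First, your closing sentence is misleading: Section \ref{Appendixity} of the present paper does \emph{not} contain the technical details for this theorem. That section proves Proposition \ref{nueva pro 3.10 y 3.11} and Theorem \ref{teo estruc min loc detallada}, which \emph{use} Theorem \ref{teo applic1} as input; the proof of Theorem \ref{teo applic1} itself lives in \cite{dnp2}. Second, in your sketch of (5) you write ``fix $c_{j'}>c_j$'' with $j'\in\I_p$, but the hypothesis is only $c_j\neq\max\sigma(S-S_0)$, which allows $j=p$ with the maximum attained on $W^\perp$. In that case there is no $c_{j'}$ available inside $\sigma(D)$, and the mass-transfer curve must move weight from $W_p$ into an eigenspace of $S-S_0$ contained in $W^\perp$. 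The construction is analogous (the linear relation still provides the slack, and $W_j\perp W^\perp$), but your sketch as written does not cover it.
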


\begin{rem}\label{cp max}
 With the notation of Theorem \ref{teo applic1}, if we assume that 
\beq\label{cp max eq}
k\ge d  \quad \implies \quad  
c_p =\max \sigma(S-S_0)  \ . 
\eeq
Indeed, if $W = \C^d$ then 
$\sigma(S-S_0) = \{c_1\coma \ldots\coma c_p\}$. Otherwise $\dim W<d\le k$ so, by items 4 and  5 of Theorem \ref{teo applic1}, 
the family $\{g_i\}_{i\in J_p}$ can not be linearly independent (because the families 
$\{g_i\}_{i\in J_j}$ are linearly independent for $1\leq j<p$, and all families are mutually orthogonal). By item 5 again,   
we deduce  that $c_p=\max \sigma(S-S_0)$. \EOE
\end{rem}

\subsection{Inner structure of local minimizers of GFOD's}\label{sec inner struc}

In this section, based on Theorem \ref{teo applic1} above, we obtain a detailed description of 
what we call the inner structure of local minimizers. In order to do this, we introduce the following
\begin{nota} \label{nuevas notas 2} 
Fix $S\in\matpos$,  $\asubi$ 
and a strictly convex u.i.n. $N$ on $\mat$. Also consider 
the notions introduced in Theorem \ref{teo applic1}. As before, consider
\begin{enumerate}
\item $\tnsa=\Theta: \tcal \rightarrow \R_{\geq0}$ given by $\Theta(\cG)=N(S-S_{\cG})$.
\item A local minimizer $\cG_0=\llav{g_i}_{i\in\I_k}\in\tcal$ of $\tnsa$, with frame operator 
$S_0=S_{\cG_0}$.
\item We denote by  $\la=(\la_i)_{i\in\I_d} = \la(S) \in (\R_{\geq 0}^d)\da$  and 
$\mu=(\mu_i)_{i\in\I_d} = \la(S_0) \in (\R_{\geq 0}^d)\da$.

\item We fix $\cB=\{v_i\}_{i\in\I_d}$ an ONB of $\C^d$ as in Theorem \ref{teo applic1}. Hence, 
\beq\label{la y mu}
S=\sum_{i\in\I_d} \la_i \ v_i\otimes v_i
\py S_{0}=\sum_{i\in\I_d} \mu_i \ v_i\otimes v_i  \ .
\eeq 
\item We consider
$W=R(S_0)$, $D=(S-S_0)|_W$ and 
$\sigma(D)=\{c_1\coma\ldots\coma c_p\}$ where $c_1<c_2<\ldots<c_p$. 

\item Let $ s_D= \max \, \{i\in\I_d: \mu_i \neq 0\} = \rk \, S_{0}\,$.

\item We denote by $  \delta= \la - \mu \in \R^d$ 
so that, by Eq. \eqref{la y mu},  
$$
S-S_0=\sum_{i\in\I_d} \delta_i \ v_i\otimes v_i \peso{and}
D=\sum_{i=1}^{s_D} \delta_i \ v_i\otimes v_i\ .
$$
Notice that $\delta$ is constructed by pairing the entries 
of ordered vectors (since $\la=\la(S)$ and $\mu=\la(S_0)$. 
Nevertheless, we have that $\la(S-S_0) = \delta\da$. 
In what follows we obtain some properties of (the unordered vector) $\delta$. 

\item For each $j\in \I_p\,$, we consider the following sets of indexes: 
$$K_j = \{ i \in \I_{s_D} :   \delta_i=\la_i -\mu_i = c_j\}  
\py J_j = \{i\in \I_k: D\,g_i = c_j \, g_i\}  \ .
$$ 
Theorem \ref{teo applic1} assures that   
$\I_{s_D}  
= \stackrel{\text{D \ \ \ \ \ }}{\bigcup_{j\in\I_p}} \ K_j \py \I_k  
= \stackrel{\text{D \ \ \ \ \ }}{\bigcup_{j\in\I_p}} \ J_j$ (disjoint unions).
\item By Eq. \eqref{el SF}, $R(S_{0})= \gen\{g_i : i \in \I_k\} 
= W = \bigoplus_{i\in\I_p} \ker \,(D-c_i\,I_W\,)$. 
Then, for every $ j\in \I_p\ $, 
$$
W_j =\gen\{g_i : i \in J_j\} = \ker \,(D-c_j\,I_W\,) = \gen\{v_i : i \in K_j\} \ ,
$$
because $g_i \in  \ker \,(D-c_j\,I_W\,)$ for every $i \in J_j\,$. 
Note that, by Theorem \ref{teo applic1}, each $W_j$ reduces both $S$ and $S_{0}\,$. \EOE 
\end{enumerate}
\end{nota}

\pausa The next proposition describes the structure of the sets $J_j$ and $K_j$ for $j\in\I_p\,$, as defined in 
Notation \ref{nuevas notas 2}. In turn, these sets play a central role in the proof of Theorem \ref{teo estruc min loc detallada} below.

\begin{pro} \label{nueva pro 3.10 y 3.11}
Let $S\in\matpos$ and $\cG_0\in \tcal$ be as in Notation \ref{nuevas notas 2}. 
Then there exist indexes $0=s_0<s_1<\ldots<s_{p-1}<s_p= \rk \,S_0 \leq d$ such that 
\beq\label{los Kj}
\barr{rl}
K_j &=J_j =\{s_{j-1}+1\coma \ldots \coma  s_j\} \ ,\quad 
\peso{for} j\in\IN{p-1}\ \  (\text{if} \ p>1 ) , \\&\\
K_p& =\{s_{p-1}+1 \coma \ldots \coma  s_p\} \ \ , \ \ J_p=\{s_{p-1}+1\coma \ldots\coma  k\} \ .
\earr 
\eeq
\end{pro}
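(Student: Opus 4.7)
The proposition has two essentially rearrangement-theoretic claims: (A) the level sets $K_j$ of $\delta$ on $\I_{s_D}$ are consecutive intervals ordered by increasing $c_j$ --- equivalently, $\delta$ is non-decreasing on $\I_{s_D}$; and (B)--(C) the frame-vector index sets are likewise consecutive, with $J_j=K_j$ for $j<p$ and $J_p$ absorbing the tail $\{s_{p-1}+1,\ldots,k\}$. I will prove (A) and (B) by separate perturbation arguments that contradict local minimality of $\cG_0$, using the strict convexity of $N$; claim (C) will then follow by complementation.

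As preliminary bookkeeping, item 8 of Notation \ref{nuevas notas 2} gives $|K_j|=\dim W_j$, and Theorem \ref{teo applic1}(5) gives $|J_j|=\dim W_j$ for every $j<p$. Hence, once the indices in $K_j$ (resp.\ $J_j$) are shown to form an interval starting right after the previous one, the integers $s_j=\sum_{l\leq j}\dim W_l$ are forced, so the whole content of the statement reduces to identifying the correct ordering of indices inside $\I_{s_D}$ and inside $\I_k$.

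For (A), assume toward a contradiction that $\delta_i>\delta_{i+1}$ for some consecutive $i,i+1\in\I_{s_D}$. Since $\lambda$ and $\mu$ are non-increasing, this forces $\lambda_i-\lambda_{i+1}>\mu_i-\mu_{i+1}\geq 0$, and in particular $\lambda_i>\lambda_{i+1}$; writing $v_i\in W_{j_1}$, $v_{i+1}\in W_{j_2}$, we have $c_{j_1}>c_{j_2}$. By item 8 of Notation \ref{nuevas notas 2}, each $W_{j_l}$ contains at least one frame vector $g_{r_l}$. I construct a smooth path $\cG(t)\in\tcal$ with $\cG(0)=\cG_0$ by rotating the pair of unit vectors $(g_{r_1}/\sqrt{a_{r_1}},\,g_{r_2}/\sqrt{a_{r_2}})$ inside their own plane, thereby preserving the norm constraints $\|g_{r_l}(t)\|^2=a_{r_l}$. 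This shifts mass between $W_{j_1}$ and $W_{j_2}$ and perturbs the spectrum of $S_{\cG(t)}$. Using the Lidskii local inequalities from \cite{dnp2} to track the movement of $\lambda(S-S_{\cG(t)})$, together with strict convexity of the symmetric gauge function associated with $N$ (and the rigidity in Theorem \ref{teo intro prelims mayo}(2)), one shows that for an appropriate sign of $t$ the value $N(S-S_{\cG(t)})$ strictly decreases, contradicting local minimality.

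For (B), suppose $\ell\in J_{j_1}$ and $\ell'\in J_{j_2}$ satisfy $j_1>j_2$ but $\ell<\ell'$, so $a_\ell\geq a_{\ell'}$. An analogous rotation of $(g_\ell,g_{\ell'})$ transfers mass from $W_{j_1}$ (larger $c_{j_1}$) to $W_{j_2}$ (smaller $c_{j_2}$), and the same two-dimensional convexity argument shows that $\Theta$ strictly decreases, again contradicting local minimality. Hence indices of $J_j$ must precede those of $J_{j'}$ whenever $j<j'<p$; combined with $|J_j|=|K_j|=s_j-s_{j-1}$ for $j<p$ this forces $J_j=K_j=\{s_{j-1}+1,\ldots,s_j\}$, and then $J_p=\I_k\setminus\bigsqcup_{j<p}J_j=\{s_{p-1}+1,\ldots,k\}$. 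The main technical obstacle is to make the perturbations in both arguments explicit and to quantitatively establish the strict decrease of $\Theta$ at first (or second) order; for this I would work with the differential of the map $\cG\mapsto S_\cG$ on the smooth manifold $\tcal$, the identity $\lambda(S-S_0)=\delta^{\downarrow}$ from Theorem \ref{teo applic1}(1), and the quantitative Lidskii-type estimates developed in \cite{dnp2}.
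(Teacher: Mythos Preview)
Your strategy for (B) --- rotating the two frame vectors $g_\ell\in W_{j_1}$ and $g_{\ell'}\in W_{j_2}$ with $c_{j_1}>c_{j_2}$ and $a_\ell\geq a_{\ell'}$ --- is exactly the paper's Proposition \ref{pro apend 1}. There the authors rotate $g_h$ and $g_l$ by angles $t$ and $\gamma t$ in the plane they span, compute the $2\times 2$ block of $S-S_{\cG(t)}$, and show its Frobenius norm has a strict local maximum at $t=0$ for a suitable $\gamma$; the existence of such $\gamma$ comes from a discriminant computation that uses precisely the inequality $a_h\leq a_l$ (your $a_{\ell'}\leq a_\ell$). Your sketch is correct in outline, though the actual decrease is second order (the first derivative vanishes), and the ``two-dimensional convexity argument'' you allude to must be this explicit discriminant calculation, not a soft appeal to Lidskii-type bounds.

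Your argument for (A), however, has a genuine gap. You propose to pick \emph{some} frame vectors $g_{r_1}\in W_{j_1}$, $g_{r_2}\in W_{j_2}$ and rotate them as in (B). But the discriminant step that makes that rotation work requires the vector in the \emph{larger}-$c$ eigenspace to have the \emph{larger} norm, i.e.\ $a_{r_1}\geq a_{r_2}$. Nothing in the hypothesis $\delta_i>\delta_{i+1}$ guarantees this; in fact, once (B) is established, every index in $J_{j_1}$ (larger $c$) exceeds every index in $J_{j_2}$, so $a_{r_1}\leq a_{r_2}$ for \emph{all} choices --- the wrong direction. You also never use the specific information $\lambda_i>\lambda_{i+1}$, $\mu_i\leq\mu_j$ that your setup provides. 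The paper's proof of (A) (Proposition \ref{pro apend 2}) uses a different perturbation: it applies a one-parameter family of (non-unitary) linear maps $V(t)=I+((1-t^2)^{1/2}-1)\,v_i\otimes v_i+t\,v_j\otimes v_i$ to \emph{all} the frame vectors simultaneously, followed by a unitary correction $U(t)$. This keeps $\cG(t)\in\tcal$ and, on the $2$-plane $\text{span}\{v_i,v_j\}$, transfers eigenvalue mass of $S_0$ from $\mu_i$ to $\mu_j$, pinching the pair $(\delta_j,\delta_i)=(c_r,c_e)$ towards its mean and producing a strict majorization $\lambda(S-S_{\tilde\cG(t)})\prec\lambda(S-S_0)$ with no hypothesis on the $a_\ell$'s. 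That construction is what your plan is missing.
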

\begin{proof}
See Section \ref{Appendixity}.
\end{proof}

\pausa
\begin{rem}\label{el delta}\rm
Consider Notation \ref{nuevas notas 2} for $S\in\matpos$ and a local minimizer $\cG_0\in \tcal$ of the map $\tnsa\,$. 
Let 
$s_0=0<s_1<\ldots<s_p\leq d$, where $s_p=\rk(S_0)$, be as in Proposition \ref{nueva pro 3.10 y 3.11}. 
In terms of these indexes we also get that $\la(S-S_0)= \delta (S\coma \ca \coma \G_0)\da$ for 
$\delta (S\coma \ca \coma \G_0) = \la(S)-\la (S_0)$, and 

\beq\label{eq el la de F}
\delta (S\coma \ca \coma \G_0) =
\big(\, c_1 \, \uno_{s_1} \coma c_2 \, \uno_{s_2-s_{1}} \coma 
\dots \coma c_p\, \uno_{s_p-s_{p-1}} \coma \la_{s_p+1} \coma 
\dots \coma \la_d\,\big) \peso{if} s_p<d\,
\eeq or 
\beq\label{eq el la de F2}
\delta (S\coma \ca \coma \G_0) = \big(\, c_1 \, \uno_{s_1} \coma c_2 \, \uno_{s_2-s_{1}} \coma 
\dots \coma c_p\, \uno_{s_p-s_{p-1}} \,\big)\peso{if} s_p=d\,.
\eeq 

\pausa
In the next result, we obtain a characterization of the indexes $s_1<\ldots<s_{p-2}$ and constants $c_1<\ldots c_{p-1}$ 
in terms of the index $s_{p-1}\,$ (when $p>1$). 
In the next section we complement these results and show the key role played by the index $s_{p-1}$ and give 
a characterization of $c_p\,$. We begin by fixing some  notation, which is independent of the norm $N$ and the 
local minimizer $\cG_0$.
\EOE
\end{rem}
\begin{nota}\label{nota los his} \rm
Let $S\in\matpos$,  $\asinsubi$, $\la(S)=(\la_i)_{i\in\Id}\in (\R^d)\da$ 
and $m =\min\{k\coma d\}$.
\ben
\item We let $h_i \igdef \la_{i}-a_i\,$, for every $i\in\I_m\,$. 
\item Given  $1\leq j\leq r \leq m$, let 
\beq\label{Pjr}
P_{j\coma r} =\frac{1}{r-j+1}\ \sum_{i=j}^r\  h_i = 
\frac{1}{r-j+1}\ \sum_{i=j}^r\ \la_{i}-a_i 
\ .
\eeq
 We abbreviate $P_{1 \coma r} = P_r\,$ for the initial averages. \EOE  
\een\end{nota}

\begin{teo}\label{teo estruc min loc detallada} 
Consider Notation \ref{nuevas notas 2} for $S\in\matpos$ and a local minimizer $\cG_0\in \tcal$ of the map $\tnsa\,$.
Assume further that $p>1$. Let 
$s_0=0<s_1<\ldots<s_p\leq d$ be such that Eq. \eqref{los Kj} holds.
Then, we have the following relations:
\ben
\item The index $s_1 = \max \, \big\{1\leq r \le s_{p-1} \, :\, 
P_{r} = \min\limits_{1\leq i\le s_{p-1}}  \, P_{ i} \, \big\}$, and 
$c_1 = P_{ s_1}\,$.
\item Recursively, if $s_j<s_{p-1}\,$, then
$$
s_{j+1} = \max \, \big\{s_j< r \le s_{p-1} \, :\, 
P_{s_j+1\coma r} = \min\limits_{s_j< i\le s_{p-1}}  \, P_{s_j+1 \coma i} \, \big\} 
\py c_{j+1} = P_{s_j+1\coma s_{j+1}}\ .
$$
\een
\end{teo}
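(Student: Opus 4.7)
The plan is to combine the structural information of Proposition \ref{nueva pro 3.10 y 3.11} with the Schur-Horn theorem for frames, and then exploit the strict monotonicity $c_1 < c_2 < \cdots < c_{p-1}$ to characterize each $s_j$ as the arg-min of an initial average of the $h_i$'s.

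First I would fix $j \in \I_{p-1}$ and observe that, by Proposition \ref{nueva pro 3.10 y 3.11} together with item 5 of Theorem \ref{teo applic1}, the family $\{g_i\}_{i \in J_j}$ is a linearly independent spanning set of $W_j$ of cardinality $|J_j| = s_j - s_{j-1} = \dim W_j$, with prescribed squared norms $(a_i)_{i \in J_j}$ and frame operator $S_0|_{W_j}$ of spectrum $(\mu_i)_{i \in K_j}$. The Schur-Horn theorem for frames (Theorem \ref{teo SH para marcos}) then yields
$$(a_i)_{i \in J_j} \prec (\mu_i)_{i \in K_j}.$$
Since $\ca$ and $\la(S)$ are both non-increasing, and $\mu_i = \la_i - c_j$ on $K_j$, both vectors above are already in non-increasing order along the natural index $i$. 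The majorization unfolds into the chain of partial-sum inequalities
$$\sum_{i = s_{j-1}+1}^{r} a_i \leq \sum_{i = s_{j-1}+1}^{r} (\la_i - c_j) \peso{for} s_{j-1} < r \leq s_j,$$
with equality at $r = s_j$. Rearranging, this is exactly $P_{s_{j-1}+1,r} \geq c_j$ with equality at $r = s_j$, and in particular $c_j = P_{s_{j-1}+1, s_j}$ for every $j \in \I_{p-1}$.

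Next I would translate this into the minimality statements. For item 1, fix $1 \leq r \leq s_{p-1}$: if $r \leq s_1$ the previous step gives $P_r \geq c_1$, with equality at $r = s_1$. If $s_1 < r \leq s_{p-1}$, pick $j' \geq 1$ such that $s_{j'} < r \leq s_{j'+1}$ and split
$$
\sum_{i = 1}^{r} h_i
= \sum_{l=1}^{j'} c_l\,(s_l - s_{l-1}) + \sum_{i = s_{j'}+1}^{r} h_i
\;\geq\; \sum_{l=1}^{j'} c_l\,(s_l - s_{l-1}) + (r - s_{j'})\, c_{j'+1},
$$
using the block-wise bound $\sum_{i=s_{j'}+1}^{r} h_i \geq (r-s_{j'})\,c_{j'+1}$ in $K_{j'+1}$. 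Thus $P_r$ is bounded below by a convex combination of $c_1, \ldots, c_{j'+1}$ with all weights $s_1, s_2-s_1, \ldots, s_{j'}-s_{j'-1}, r-s_{j'}$ strictly positive. Since $c_{j'+1} \geq c_2 > c_1$ and it appears with the positive weight $r - s_{j'}$, this convex combination is strictly greater than $c_1$, giving $P_r > c_1$. Therefore $s_1$ is the largest $r$ in $[1, s_{p-1}]$ at which $P_r$ attains its minimum, as claimed. Item 2 follows by the same argument applied to the shifted window $[s_j+1, s_{p-1}]$, with $c_{j+1} < c_{j+2} < \cdots$ playing the role of $c_1 < c_2 < \cdots$.

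The main obstacle is the bookkeeping: translating the block-wise Schur-Horn majorization into a statement about the initial averages $P_r$ on the full interval $[1, s_{p-1}]$. The crucial ingredients are the identification $J_j = K_j$ for $j < p$ (Proposition \ref{nueva pro 3.10 y 3.11}), which guarantees that the frame in each intermediate block is a Riesz basis, and the global monotonicity of $\ca$ and $\la(S)$, which ensures the majorization unfolds as a chain of partial-sum inequalities in the natural index. Once these identifications are in place, neither the local minimality of $\cG_0$ nor the specific choice of norm $N$ is invoked again.
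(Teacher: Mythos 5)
Your proposal is correct and takes essentially the same route as the paper's proof: you establish the block-wise Schur--Horn majorization $(a_i)_{i\in J_j}\prec (\la_i-c_j)_{i\in K_j}$ (this is the paper's Proposition \ref{pro prom para c1}, item 1), unfold it into partial-sum inequalities $P_{s_{j-1}+1,r}\ge c_j$ with equality at $r=s_j$, and then use the convex-combination bound with strictly increasing $c_1<\dots<c_{p-1}$ to rule out larger maximizers. The only difference is presentational: you argue directly that $P_r>c_1$ whenever $s_1<r\le s_{p-1}$, whereas the paper fixes a hypothetical later maximizer $t$ and derives a contradiction from $P_{s_j+1,t}\ge\beta>c_{j+1}$; the underlying convex-combination computation (with weights $s_1,\,s_2-s_1,\dots,\,r-s_{j'}$ summing to $r-s_j$) is the same.
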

\begin{proof}
See Section \ref{Appendixity}.
\end{proof}
\subsection{The co-feasible case for $k\ge d$.}\label{sec cofea}

Throughout this section we assume that $k \ge d$.  
In \cite{dnp2} we showed that in some cases, local minimizers of G-FOD functions are also global minimizers. We recall this fact
in the following

\begin{teo}[See \cite{dnp2}]\label{teo conj caso esp}\rm
Consider Notation \ref{nuevas notas 2} with $k\ge d$ for $S\in\matpos$ and a local minimizer $\cG_0\in \tcal$ 
of the map $\tnsa\,$. 
 Assume further that $p=1$ i.e., that 
there exists $c=c_1$ that satisfies $(S-S_{0}) g_i=c\,g_i\, $, for every $i\in\I_k\,$. 
Then there exists an ONB $\{v_i\}_{i\in\I_d}$ of $\C^d$ such that 
$$S=\sum_{i\in\I_d} \la_i\ v_i\otimes v_i \py S_{0}=\sum_{i\in\I_d} (\la_i-c)^+\ v_i\otimes v_i\,, $$
where $(\la_i)_{i\in\I_d}=\la(S)\in (\R_{\geq 0}^d)\da$. 
Moreover,
$\cG_0$ is a global minimizer of $\Theta$ in $\tcal$.
\qed
\end{teo}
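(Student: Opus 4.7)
The plan is to use Theorem \ref{teo applic1} to jointly diagonalize $S$ and $S_0$ in a common ONB, exploit the hypothesis $p=1$ together with the size condition $k\ge d$ to deduce the water-filling formula $\mu_i=(\la_i-c)^+$, and then prove global optimality by combining Lidskii's inequality with the Schur--Horn criterion to reduce the minimization of $\Theta$ to a purely spectral majorization.

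\textbf{Spectral structure.} By item 1 of Theorem \ref{teo applic1} there is an ONB $\{v_i\}_{i\in\I_d}$ with $S=\sum_i \la_i\,v_i\otimes v_i$ and $S_0=\sum_i \mu_i\,v_i\otimes v_i$, where $\la=\la(S)$ and $\mu=\la(S_0)$ are in non-increasing order. Setting $s_D:=\rk S_0$, the range $W=R(S_0)$ equals $\gen\{v_i:i\le s_D\}$. The hypothesis $p=1$ gives $(S-S_0)|_W=c\,I_W$, so $\la_i-\mu_i=c$ for $i\le s_D$, which forces $\mu_i=\la_i-c>0$ and $\la_i>c$. Because $k\ge d$, Remark \ref{cp max} applies and yields $c=\max\sigma(S-S_0)$. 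For $i>s_D$ one has $\mu_i=0$ and $v_i\in W^\perp$; since $S_0$ vanishes on $W^\perp$, $v_i$ is an eigenvector of $S-S_0$ with eigenvalue $\la_i$, and the maximality of $c$ forces $\la_i\le c$. Combining both cases, $\mu_i=(\la_i-c)^+$ for every $i\in\I_d$, which proves the spectral formula.

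\textbf{Reduction of global optimality.} For any $\cG\in\tcal$, write $\nu:=\la(S_\cG)\in(\R_{\ge 0}^d)\da$. Lidskii's inequality (Theorem \ref{mrs284}) yields $\la-\nu\prec\la(S-S_\cG)$; passing to absolute values (Remark \ref{desimayo}, item 2) gives $s(D_{\la-\nu})\prec_w s(S-S_\cG)$, whence Theorem \ref{teo intro prelims mayo} produces $N(D_{\la-\nu})\le N(S-S_\cG)$. By the Schur--Horn theorem (Theorem \ref{teo SH para marcos}), as $\cG$ varies over $\tcal$ the spectrum $\nu$ ranges over exactly $\cV:=\{\nu\in(\R_{\ge 0}^d)\da:\ca\prec(\nu,0_{k-d})\}$. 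Thus it suffices to show that $\nu_0:=\la(S_0)=\bigl((\la_i-c)^+\bigr)_i$ minimizes $\nu\mapsto N(D_{\la-\nu})$ on $\cV$, and by Theorem \ref{teo intro prelims mayo} once more this reduces to the spectral majorization
\begin{equation*}
\la-\nu_0 \prec_w |\la-\nu| \qquad \text{for every } \nu\in\cV.
\end{equation*}

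\textbf{Main obstacle.} The remaining task, verifying this water-filling majorization, is the technical heart of the proof. The left-hand side equals $(c,\ldots,c,\la_{s_D+1},\ldots,\la_d)$ and is already non-negative and non-increasing (by $\la_i\ge c$ for $i\le s_D$ and $\la_i\le c$ for $i>s_D$ established above), so its $r$-partial sums are $rc$ for $r\le s_D$ and $s_D c+\sum_{s_D<i\le r}\la_i$ for $r>s_D$. These must be bounded against the $r$-partial sums of $|\la-\nu|\da$ using only the Schur--Horn constraint $\ca\prec(\nu,0_{k-d})$ and the trace identity $\sum_i(\la_i-\nu_i)=s_Dc+\sum_{i>s_D}\la_i$, which already matches the total sum on the left. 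I would argue by an exchange-type principle: any $\nu\in\cV$ with $\nu\neq\nu_0$ either pulls some $\nu_i$ for $i\le s_D$ away from $\la_i-c$, raising $|\la_i-\nu_i|$ above the level $c$, or places $\nu_i>0$ at some index $i>s_D$ where $\la_i\le c$, thereby injecting a positive contribution into the rearranged list; in either scenario the partial sums of $|\la-\nu|\da$ dominate those of $\la-\nu_0$, yielding the required submajorization.
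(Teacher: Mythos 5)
The paper does not actually prove this theorem: it is quoted from \cite{dnp2} with the \qed box marking a citation, not a proof, so there is no ``paper's own proof'' to compare against. Your attempt must therefore stand on its own. The spectral-structure half is correct and efficient: the $p=1$ hypothesis together with Remark~\ref{cp max} (which does require $k\ge d$) gives $c=\max\sigma(S-S_0)$, and then splitting into $i\le s_D$ and $i>s_D$ forces $\mu_i=(\la_i-c)^+$. The reduction of global optimality to a scalar claim is also the right move: Lidskii plus Remark~\ref{desimayo}(2) plus Theorem~\ref{teo intro prelims mayo} gives $N(D_{\la-\nu})\le N(S-S_\cG)$, unitary invariance gives $N(S-S_{\cG_0})=N(D_{\la-\nu_0})$, and Theorem~\ref{teo SH para marcos} identifies the range of $\nu=\la(S_\cG)$; so it indeed suffices to establish the vector submajorization $|\la-\nu_0|\prec_w|\la-\nu|$ over that range.

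That last step is where the proof is not closed, and the ``exchange-type'' paragraph does not repair it. Two concrete problems. First, the heuristic is wrong as stated: if $\nu$ pulls a coordinate $i\le s_D$ from $\nu_{0,i}=\la_i-c$ toward $\la_i$, then $|\la_i-\nu_i|$ decreases below $c$, not above it; the compensating increase must occur at other indices (and, because $\nu_{0,j}=0$ for $j>s_D$ is already extremal, it must occur at other indices $\le s_D$), and one has to track how this reshuffling affects the top-$r$ sorted partial sums for every $r$. Second, the claim that $\la-\nu_0=(c,\ldots,c,\la_{s_D+1},\ldots,\la_d)$ is ``already non-negative'' fails whenever $c<0$, which genuinely occurs when $\tr(\ca)>\tr(S)$ --- the paper's second worked example in Section~\ref{sec 4.1.} has $c_1=-1$; in that case $s_D=d$ and $\la-\nu_0=c\,\uno_d$ is negative in every coordinate, so the absolute-value bookkeeping in your sketch is incorrect. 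The submajorization is true, but it is a real lemma that needs a real proof (e.g.\ a careful water-filling argument over index sets using the fixed trace $\tr\nu=\tr\ca$, or a convexity argument in the style of \cite{mrs2}). Until that step is supplied, the global-minimality conclusion is not established.
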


\begin{cor}\label{cor cons caso cofea} 
With the hypotheses and notation in Theorem \ref{teo conj caso esp} we have that:
\begin{enumerate}
\item The constant $c=\max \sigma(S-S_0)$ is the largest eigenvalue of $S-S_0$.
\item The eigenvalue $\la_i(S_0)= (\la_i-c)^+$, for every $i\in\I_d\,$.
\item The list of norms  $\ca\prec \big(\, (\la_i-c)^+\big)_{i\in\I_d}\,$.  In particular 
$$\tr(\ca)=\sum_{i\in\I_k}a_i=\sum_{i\in\I_d} (\la_i-c)^+\,.$$
\end{enumerate}
\end{cor}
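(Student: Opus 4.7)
The plan is to derive all three items essentially as consequences of Theorem \ref{teo conj caso esp}, Remark \ref{cp max}, and the Schur--Horn theorem for frames (Theorem \ref{teo SH para marcos}); no new geometric work is needed.

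First, for item 1, I would invoke Remark \ref{cp max}: since the hypothesis includes $k\ge d$, we have $c_p=\max\sigma(S-S_0)$. But the assumption $p=1$ means there is only one eigenvalue $c_1=c$ of $D$, and Theorem \ref{teo conj caso esp} gives the full diagonalization $S-S_0=\sum_{i\in\I_d}(\lambda_i-(\lambda_i-c)^+)\,v_i\otimes v_i$, whose eigenvalues are each either $c$ (when $\lambda_i\ge c$) or $\lambda_i$ (when $\lambda_i<c$). Since the latter are strictly smaller than $c$, we conclude $\max\sigma(S-S_0)=c$, matching $c_p=c_1=c$.

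For item 2, I would use that $\lambda=\lambda(S)\in(\R_{\ge 0}^d)\da$ is non-increasing by construction, hence so is $(\lambda_i-c)_{i\in\I_d}$ and therefore so is $((\lambda_i-c)^+)_{i\in\I_d}$. Since the spectral decomposition $S_0=\sum_{i\in\I_d}(\lambda_i-c)^+\,v_i\otimes v_i$ in Theorem \ref{teo conj caso esp} uses an ONB, the coefficients already appear in non-increasing order, so they must coincide with $\lambda(S_0)\da$, proving $\lambda_i(S_0)=(\lambda_i-c)^+$ for every $i\in\I_d$.

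Finally, for item 3, I would apply the Schur--Horn theorem for frames (Theorem \ref{teo SH para marcos}) to the family $\cG_0=\{g_i\}_{i\in\I_k}\in\tcal$: since $\|g_i\|^2=a_i$ and $S_{\cG_0}=S_0$, the existence of $\cG_0$ immediately yields $\ca\prec\lambda(S_0)$, and the previous item then rewrites this as $\ca\prec\big((\lambda_i-c)^+\big)_{i\in\I_d}$. The trace identity $\sum_{i\in\I_k}a_i=\sum_{i\in\I_d}(\lambda_i-c)^+$ is then just the equality of sums built into the definition of majorization. I do not anticipate any obstacle: every piece is already in hand, and the argument is essentially a bookkeeping combination of Theorem \ref{teo conj caso esp}, Remark \ref{cp max}, and Theorem \ref{teo SH para marcos}.
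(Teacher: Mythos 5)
Your proof is correct and follows essentially the same route as the paper: item 1 via Remark \ref{cp max} combined with $p=1$, item 2 from the monotonicity of $\big((\la_i-c)^+\big)_{i\in\I_d}$ together with the spectral decomposition in Theorem \ref{teo conj caso esp}, and item 3 by applying Theorem \ref{teo SH para marcos} to $\cG_0$ and then substituting the formula from item 2. The extra spectral computation you sketch for item 1 is a harmless double-check, not a different argument.
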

\begin{proof}
1. 
We are assuming that  $k\geq d$. Then  Remark  \ref{cp max} 
assures that $c=c_p=\max \sigma(S-S_0)$.

\pausa
2. This is a direct consequence of Theorem \ref{teo conj caso esp} above and the fact that $(\la_i)_{i\in\I_d}=\la(S)\in (\R^d)\da$, 
so that also $\big(\, (\la_i-c)^+\big)_{i\in\I_d}\in (\R^d)\da$.

\pausa
3. Since $\cG_0 \in \tcal$ (it is a family of vectors with norms given by $\ca$), 
then  Theorem \ref{teo SH para marcos} assures that 
$$
\ca\prec \la(S_{\cG_0}) = \big(\, (\la_i-c)^+\big)_{i\in\I_d}\ .
$$ 
The rest of the statement
is a direct consequence of this majorization relation.
\end{proof}

\pausa
The previous results motivate the following notion, which only depends on some $\la  \in (\R_{\ge0}^{d})\da$
and $\asubi$, with $k\ge d$ (and does not require any norm $N$ nor a local minimizer $\cG_0$).

\begin{fed}\label{caso cofea}\rm
Let $\la  \in (\R_{\ge0}^{d})\da $  
and $\asubi$, with $k\ge d$. 
We say that the pair $(\la\coma \ca) $ is {\bf co-feasible} if 
there exists a constant 
\beq\label{c cof}
c< \la_1  \peso{such that} \ca\prec \big(\, (\la_i-c)^+\big)_{i\in\I_d} \ .
\eeq
In this case, 
the co-feasibility constant $c$ is uniquely determined by 
$
\tr(\ca)=\sum\limits_{i\in\I_d} (\la_i-c)^+\,
$.
\EOE
\end{fed}

\begin{pro}\label{prop cond suf para cofea}
Let $S\in\matpos$ and $\asubi$ with $k\ge d$. Then the pair $(\la(S)\coma \ca) $ is co-feasible if and only if
the following conditions hold:
\ben
\item There exist $\cG=\{g_i\}_{i\in\I_k}\in\tcal$ and $c\in \R$ such that $(S-S_\cG)\, g_i=c\,g_i\,$, for every  $i\in\I_k\,$.
\item This constant $c=\max \sigma(S-S_\cG)$.\EOE
\een
\end{pro}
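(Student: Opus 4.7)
The plan is to prove the two implications separately, using the Schur–Horn theorem for frames (Theorem \ref{teo SH para marcos}) in both directions, together with the fact that if $(S-S_\cG)$ acts as a scalar $c$ on the span of the $g_i$'s, then $R(S_\cG)$ reduces both $S_\cG$ and $S$.

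\textbf{Forward direction ($\Rightarrow$).} Assume $(\la(S),\ca)$ is co-feasible, with constant $c$ as in Definition~\ref{caso cofea}, so that $c<\la_1(S)$ and
$$\ca\prec \big((\la_i(S)-c)^+\big)_{i\in\I_d}\quad \text{with equality of traces}.$$
Fix an ONB $\{v_i\}_{i\in\I_d}$ of $\C^d$ with $Sv_i=\la_i(S)v_i$, and define $T=\sum_{i\in\I_d}(\la_i(S)-c)^+\,v_i\otimes v_i\in\matpos$, so $\la(T)=((\la_i(S)-c)^+)_{i\in\I_d}$. By Theorem~\ref{teo SH para marcos}, since $\ca\prec\la(T)$ and $k\ge d$, there exists $\cG=\{g_i\}_{i\in\I_k}\in\tcal$ with $S_\cG=T$. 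Then
$$S-S_\cG = \sum_{i\in\I_d}\big(\la_i(S) - (\la_i(S)-c)^+\big)\,v_i\otimes v_i=\sum_{i\in\I_d}\min\{\la_i(S),c\}\,v_i\otimes v_i,$$
which equals $c\,I$ on $R(T)=\gen\{v_i:\la_i(S)>c\}$ and has eigenvalues $\le c$ on $R(T)^\perp$. Since $c<\la_1(S)$ the subspace $R(T)$ is non-trivial, so $c=\max\sigma(S-S_\cG)$, and because each $g_i\in R(S_\cG)=R(T)$, we get $(S-S_\cG)g_i=c\,g_i$ for every $i\in\I_k$.

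\textbf{Reverse direction ($\Leftarrow$).} Assume the two conditions of the Proposition hold. Since $R(S_\cG)=\gen\{g_i:i\in\I_k\}$ and each $g_i$ is a $c$-eigenvector of $S-S_\cG$, we have $(S-S_\cG)|_{R(S_\cG)}=c\,I_{R(S_\cG)}$. As $S-S_\cG$ is selfadjoint, $R(S_\cG)$ reduces $S-S_\cG$, hence also $S=S_\cG+(S-S_\cG)$. Note $R(S_\cG)$ is non-trivial (since the $g_i$ have positive norms). On $R(S_\cG)^\perp$ we have $S_\cG=0$ and therefore $S=S-S_\cG$, whose eigenvalues are $\le c$ by condition~2. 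On the other hand, on $R(S_\cG)$ the operator $S$ is $S_\cG|_{R(S_\cG)}+cI$, whose eigenvalues are exactly $\mu+c$ for $\mu$ ranging over the strictly positive eigenvalues of $S_\cG$; these are all strictly greater than $c$.

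Picking compatible ONBs for $R(S_\cG)$ and $R(S_\cG)^\perp$ consisting of eigenvectors of $S$ (using that $S$ and $S_\cG$ commute on $R(S_\cG)$ via the scalar relation, and $S_\cG=0$ on $R(S_\cG)^\perp$), we read off that the positive eigenvalues of $S_\cG$ are exactly $\{\la_i(S)-c:\la_i(S)>c\}$. Arranging in non-increasing order and padding with zeros,
$$\la(S_\cG)=\big((\la_i(S)-c)^+\big)_{i\in\I_d}.$$
In particular $\la_1(S)>c$ (since $S_\cG\neq 0$). Finally, since $\cG\in\tcal$, Theorem~\ref{teo SH para marcos} yields $\ca\prec\la(S_\cG)=((\la_i(S)-c)^+)_{i\in\I_d}$, which is exactly the co-feasibility of $(\la(S),\ca)$ with constant $c$.

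The proof is essentially a bookkeeping argument; the only mildly delicate point is the reverse direction, where one must justify that the range of $S_\cG$ reduces $S$ and then identify the nonzero spectrum of $S_\cG$ with $\{\la_i(S)-c:\la_i(S)>c\}$ via the hypothesis $c=\max\sigma(S-S_\cG)$. Once this structural decomposition is in place, Schur–Horn closes the argument.
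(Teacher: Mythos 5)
Your proof is correct and follows essentially the same route as the paper's: in one direction you build $S_\cG$ from the ansatz $\sum (\la_i(S)-c)^+\, v_i\otimes v_i$ and invoke the Schur--Horn theorem for frames; in the other you use that $W=R(S_\cG)$ reduces $S$, split the spectrum into the $W$ and $W^\perp$ parts, and identify $\la(S_\cG)=((\la_i(S)-c)^+)_{i\in\I_d}$ before applying Schur--Horn again. The only cosmetic difference is that you label the directions oppositely and close the argument with $S_\cG\neq 0$ rather than $\tr\ca\neq 0$ to get $c<\la_1(S)$.
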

\begin{proof}
Assume that there exist 
$c\in \R$ and  $\cG=\{g_i\}_{i\in\I_k}\in\tcal$ 
which satisfy items 1 and 2. 
By Eq. \eqref{el SF},   $W = R(S_\cG)= \gen\{g_i : i\in \I_k\}$. Since 
 $(S-S_\cG)\big|_W =c\, I_W\,$, then $S(W)\inc W$. Let $r = \dim W$. 
Then, considering separately the eigenvalues of $S\big|_W $
and $S\big|_{W\orto}=(S-S_\cG)\big|_{W\orto}\,$,   the fact that 
$c=\max \sigma(S-S_\cG)$ implies that  
$$
c< \la_i(S)  \peso{for} i \in \I_r \py c\ge 
\la_i(S) \peso{for} r<i\le d  \ .
$$
Therefore 
$\la(S_\cG) = \la (S- (S-S_\cG)\,) = \big(\, (\la_i(S)-c)^+\big)_{i\in\I_d}$. 
Hence,  arguing as in the proof of Corollary 
\ref{cor cons caso cofea}, we conclude that this $c$ satisfies Eq. \eqref{c cof}. Note that  $c<\la_1(S)$ because $\tr \ca \neq 0$. 

\pausa
Conversely, if there exists $c$ which satisfies Eq. \eqref{c cof}, let $\cB=\{v_i\}_{i\in\I_d}$ be an ONB for $\C^d$ such that 
$$
S=\sum_{i\in\I_d} \la_i(S)\, v_i\otimes v_i \peso{,  \ \ and set }S_0\igdef \sum_{i\in\I_d} (\la_i(S)-c)^+\, v_i\otimes v_i\in\matpos\ .
$$
By Theorem \ref{teo SH para marcos}, there exists $\cG=\{g_j\}_{j\in\I_k}\in\tcal$ such that $S_0=S_\cG\,$.
Note that 
\beq\label{minmin}
\la_i(S)-(\la_i(S)-c)^+=\min\{\la_i(S)\coma c\}  \peso{for every}  i\in\I_d\ . 
\eeq
Then $c\ge\max \sigma(S-S_\cG)$. 
If we let 
$$
r=\rk \, S_\cG=\max\{i\in\I_d:\ (\la_i(S)-c)^+>0\}=\max\{i\in\I_d:\ \la_i(S)> c\}\ge 1 \ ,
$$ 
then $\trivial \neq W\igdef  R(S_\cG)=\text{span}\{v_i:\ i\in\I_r\}$, and it  
satisfies that $(S-S_\cG)\big|_W = c\, I_W\,$. 
The proof finishes by noticing that, by Eq. \eqref{el SF}, 
 $g_j \in W $ and hence $(S-S_\cG)g_j=c\, g_j$ for every $j\in \I_k\,$.
\end{proof}

\begin{rem} \rm 
Let $S\in\matpos$ and $\asubi$ (with $k\ge d$) such that the pair $(\la(S)\coma \ca) $ is co-feasible. 
Let $\cG=\{g_i\}_{i\in\I_k}\in\tcal$ and $c\in \R$ be as in the proof of the 
second part of Proposition \ref{prop cond suf para cofea}. 
Then,  by Theorem \ref{teo conj caso esp}, $\cG$ is a global (and local) 
minimizer of the map $\tnsa\,$, with $p =1$.  Nevertheless, a priori this fact does not imply that 
every local minimizers should have the same structure (namely, to have also $p=1$). We shall prove soon 
that the spectral structure of local minimizers is indeed unique (in general, and then also in the co-feasible cases). 
 \EOE
\end{rem}

\pausa
It is worth pointing out that there are GFOD problems that are not co-feasible. In order to see this
we include the following:

\begin{exa}\label{exa no cofeasible}
Consider $S\in\cM_4(\C)^+$ be such that $\la:=\la(S)=(2,2,1,1)\in(\R_{>0}^4)\da$ and let $\ca=(3,1,1,1)\in(\R_{>0}^4)\da$. Then, the pair
$(\la\coma\ca)$ is not co-feasible. Indeed, the unique solution $c<2$ to the equation $6=\tr(\ca) = 2\,(2-c)^++ 2\,(1-c)^+$ is $c=0$.
Thus $((\la_i-c)^+)_{i\in\I_4}=\la$. But it can be easily checked that $\ca\not\prec \la $. 
\EOE 
\end{exa}

\pausa
Although in general, given $S\in\matpos$ and $\asubi$, the pair $(\la(S)\coma \ca)$ corresponding to this data is 
not co-feasible, 
the GFOD problems contain a co-feasible part. Indeed, if we further consider a strictly convex u.i.n. $N$ in $\mat$, 
then local minimizers of $\Theta_{(N\coma S\coma \ca)}$ allow us to locate such co-feasible parts. In order to describe
this situation, we introduce the following

\begin{fed}\label{cofea index}\rm
Let $S\in\matpos$ and $\asubi$ with $k\ge d$. 
For $r\in\I_{d-1}\cup \trivial$ we consider the truncated data 
$$
\la^{(r)}(S) =(\la_{r+1}(S) \coma \ldots \coma \la_d(S))
\in(\R_{\geq 0}^{d-r})\da \py 
\ca^{(r)} =(a_{r+1}(S) \coma \ldots \coma a_k) \in (\R_{>0}^{k-r})\da \ .
$$
We say that $r$ is a {\bf co-feasible index} for $S$ and $\ca$ if the pair 
$( \la^{(r)}(S) \coma \ca^{(r)}) $ is co-feasible
 (according to Definition \ref{caso cofea} with dimensions $d-r\leq k-r$).
\EOE
\end{fed}

\begin{rem} \label{R3.15}\rm
Let $S\in\matpos$ and $\asubi$ with $k\ge d$. 
Let $\cB=\{v_i\}_{i\in\I_d}$ be an ONB for $\C^d$ such that $S\,v_i=\la_i(S)\, v_i$ for $i\in\I_d\,$.
Then, by Proposition \ref{prop cond suf para cofea}, an index   $r\in\I_{d-1}\cup \trivial$ is co-feasible \sii
the conditions $1$ and $2$ of Proposition \ref{prop cond suf para cofea} hold for the space 
$V_r=\text{span}\{v_i:\ r+1\leq i\leq d\}$, the positive operator and $S_r=S|_{V_r}\in L(V_r)$
and the vector of norms  $\ca^{(r)} =(a_{r+1}(S) \coma \ldots \coma a_k) \in (\R_{>0}^{k-r})\da $. 
This means that there exist  $c\in \R$ and 
$$
\cG=\{g_i\}_{i\in\I_{k-r}}\in\mathbb T_{V_r}(\ca^{(r)}) \igdef \mathbb T_{k-r}(\ca^{(r)}) \cap  V_r^{k-r} \py c\in \R 
$$
such that $(S_r-S_\cG)\, g_i=c\,g_i\,$, for every  $i\in\I_{k-r}\,$, and  $c=\max \sigma(S_r-S_\cG)$.
Note that this statement seems to depend on the basis $\cB$. 
But actually, the list of eigenvalues  $\la(S_r)= \la^{(r)}(S) 
\in(\R_{\geq 0}^{d-r})\da$, so it does not depend on $\cB$.
\EOE
\end{rem}
%
%
%
%
%
%

\pausa 
The next result complements Theorem \ref{teo estruc min loc detallada}. 

\begin{pro} \label{pro index fea para loc min}
Consider Notation \ref{nuevas notas 2} with $k\ge d$ for $S\in\matpos$ and a local minimizer $\cG_0\in \tcal$ of the map $\tnsa\,$. 
Let $0=s_0<s_1<\ldots<s_{p-1}<s_p\leq d$ be as in Proposition \ref{nueva pro 3.10 y 3.11}.
Then $c_p=\max\sigma(S-S_{\cG_0})$ and $s_{p-1}$ is a co-feasible index for $S$ and $\ca$. 

\pausa
In particular, the constant $c_p$ and the index $s_p= \rk\, S_{\cG_0}$ are uniquely determined by 
the equations 
\beq\label{eq para cp}
\sum_{i=s_{p-1}+1}^k a_i=\sum_{i=s_{p-1}+1}^d (\la_i(S)-c_p)^+ \py
s_p=\max\{s_{p-1}+1\leq i\leq d\ : \ \la_i(S)-c_p>0\}\,.
\eeq
\end{pro}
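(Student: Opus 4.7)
The first assertion $c_p=\max\sigma(S-S_{\cG_0})$ is immediate from Remark \ref{cp max}, since we are in the regime $k\ge d$. The bulk of the work is to verify that $s_{p-1}$ is a co-feasible index; once this is settled, both equations in \eqref{eq para cp} will follow from the uniqueness of the co-feasibility constant built into Definition \ref{caso cofea} together with an identification of the non-zero eigenvalues of $S_{\cG_0}$ above level $s_{p-1}$.

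My plan is to apply Proposition \ref{prop cond suf para cofea} to the reduced data: the positive operator $S_{s_{p-1}}\igdef S|_{V_{s_{p-1}}}$ on $V_{s_{p-1}}=\gen\{v_i : s_{p-1}+1\le i\le d\}$, the vector $\ca^{(s_{p-1})}=(a_{s_{p-1}+1},\ldots,a_k)$, and the candidate sub-family $\cG^{(p)}\igdef \{g_i\}_{i=s_{p-1}+1}^{k}$. By Proposition \ref{nueva pro 3.10 y 3.11} we have $J_p=\{s_{p-1}+1,\ldots,k\}$ and $K_p=\{s_{p-1}+1,\ldots,s_p\}$, so the squared norms of the vectors in $\cG^{(p)}$ are precisely the entries of $\ca^{(s_{p-1})}$, and by item 9 of Notation \ref{nuevas notas 2} these vectors all sit inside $W_p=\gen\{v_i : i\in K_p\}\subset V_{s_{p-1}}$. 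Since $S_0$ annihilates $W\orto$ while $W\cap V_{s_{p-1}}=W_p$, one obtains $S_{\cG^{(p)}}=S_0\big|_{V_{s_{p-1}}}$, and therefore $S_{s_{p-1}}-S_{\cG^{(p)}}=(S-S_0)\big|_{V_{s_{p-1}}}$ as operators on $V_{s_{p-1}}$.

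With this identification, the two hypotheses of Proposition \ref{prop cond suf para cofea} become routine. First, each $g_i$ with $i\in J_p$ lies in $W_p$ and is an eigenvector of $D=(S-S_0)|_W$ with eigenvalue $c_p$, hence $(S_{s_{p-1}}-S_{\cG^{(p)}})g_i=c_p\,g_i$. Second, the operator $(S-S_0)\big|_{V_{s_{p-1}}}$ is diagonalized by $\{v_i\}_{i=s_{p-1}+1}^d$, with eigenvalues $c_p$ on the block $K_p$ and $\la_i(S)$ for $s_p<i\le d$; the equality $c_p=\max\sigma(S-S_0)$ obtained in the first paragraph forces each of these residual $\la_i$ to be $\le c_p$, so that $c_p=\max\sigma((S-S_0)|_{V_{s_{p-1}}})$. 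Proposition \ref{prop cond suf para cofea} then yields that $s_{p-1}$ is a co-feasible index and that its co-feasibility constant is precisely $c_p$.

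It remains to deduce \eqref{eq para cp}. The first identity is exactly the trace normalization built into Definition \ref{caso cofea} for the reduced co-feasible pair $(\la^{(s_{p-1})}(S),\ca^{(s_{p-1})})$, and it uniquely determines $c_p$. For the second identity, Theorem \ref{teo conj caso esp} applied on $V_{s_{p-1}}$ (or, equivalently, the formula $S_{\cG^{(p)}}=S_0|_{V_{s_{p-1}}}$ established above) gives $\la\bigl(S_0|_{V_{s_{p-1}}}\bigr)=\big((\la_i-c_p)^+\big)_{i=s_{p-1}+1}^d$; since $\rk\bigl(S_0|_{V_{s_{p-1}}}\bigr)=\dim W_p=s_p-s_{p-1}$, the largest index $i\in\{s_{p-1}+1,\ldots,d\}$ with $\la_i-c_p>0$ is exactly $s_p$. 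The subtlest point in the plan is to confirm the $S$- and $S_0$-invariance of $V_{s_{p-1}}$ and the precise identification $S_{\cG^{(p)}}=S_0|_{V_{s_{p-1}}}$; both are delivered by the reducing property of the $W_j$ from Theorem \ref{teo applic1} together with the vanishing of $S_0$ on $W\orto$.
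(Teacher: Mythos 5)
Your proposal is correct and follows essentially the same route as the paper: reduce to $V_{s_{p-1}}$, observe via Proposition \ref{nueva pro 3.10 y 3.11} and the decomposition $V_{s_{p-1}}=W_p\oplus W\orto$ that $S_{\cG^{(p)}}=S_0|_{V_{s_{p-1}}}$, then verify the two conditions of Proposition \ref{prop cond suf para cofea} (equivalently Remark \ref{R3.15}) using $c_p=\max\sigma(S-S_0)$ from Remark \ref{cp max}. The only cosmetic difference is that you offer an appeal to Theorem \ref{teo conj caso esp} as one way to derive \eqref{eq para cp}, whereas the paper deduces it directly from Definition \ref{cofea index}; your parenthetical alternative (reading off $\la(S_0|_{V_{s_{p-1}}})$ from the explicit diagonalization) is cleaner and avoids having to argue that $\cG^{(p)}$ is itself a local minimizer of the reduced problem.
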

\begin{proof} Let $S_0 = S_{\cG_0}\,$. 
Note  that $c_p=\max\sigma(S-S_0)$ by Remark \ref{cp max}, since we are assuming that $k\ge d$. 
In order to show that $s_{p-1}$ is a co-feasible index we shall use Remark \ref{R3.15}. Let $r=s_{p-1}\,$. Recall from 
Notation \ref{nuevas notas 2} and Proposition \ref{nueva pro 3.10 y 3.11} that 
$J_p = \{i \in \I_k : (S-S_0) g_i = c_p\, g_i\} = \{r+1\coma \ldots\coma  k\}$ and that 
$W_p = \gen\{g_i: i \in J_p\} =\gen \{v_j : r+1\le j \le s_p\}$. 
Since  
$$
W = R(S_0)= \gen \{v_i :  j \in \I_{s_p}\} \peso{then} 
V_r=\text{span}\{v_i:\ r+1\leq i\leq d\} = 
W_{p}\oplus W^\perp \ . 
$$ 
Then, $\cG_r=\{g_i\}_{i=r+1}^k\in\mathbb T_{V_r}(\ca^{(r)})= \mathbb T_{k-r}(\ca^{(r)}) \cap  V_r^{k-r}$ 
is such that $S_{\cG_r} = 
S_0|_{V_r}$ (here we use that, by Eq. \eqref{Wj ort},  
$g_j \in W_p\orto $ for every $j\notin J_ p$). So that, if $P_\cM$ denotes the orthogonal projection onto a subspace $\cM\inc \C^n$,  
$$
S|_{V_r}-S_{\cG_r}=(S-S_0)|_{V_r}=c_p\,P_{W_p}+ S\,P_{W^\perp} 
\implies (S|_{V_r}-S_{\cG_r})\,g_i=c_p\, g_i \peso{for} r+1\leq i\leq k\ .
$$
Hence $\max\sigma(S|_{V_r}-S_{\cG'})\leq \max\sigma(S-S_0)=c_p\,$ and, by Remark \ref{R3.15}, 
$s_{p-1}=r $ is a co-feasible 
index for $S$ and $\ca$. Then, by Definition  \ref{cofea index}, 
$s_p$ and $c_p$ are determined by Eq. \eqref{eq para cp}.
\end{proof}

\begin{rem} \label{el delta con N}\rm
Consider Notation \ref{nuevas notas 2} with $k\ge d$ for $S\in\matpos$ and a local minimizer $\cG_0\in \tcal$ of the map $\tnsa\,$. 
Taking into account all objects and  facts detailed in Notation \ref{nuevas notas 2}, Remark \ref{el delta}, Theorem \ref{teo estruc min loc detallada}, Eq. \eqref{minmin} and 
 Proposition \ref{pro index fea para loc min}, we conclude that 
$\la (S-S_{\cG_0}) = \delta (S\coma \ca \coma \G_0)\da$, with
\beq\label{del SaN}
\delta(S\coma\ca\coma \cG_0) \igdef
\left( 
c_1\,\uno_{s_1(r)} \coma c_2\,\uno_{s_2-s_1} 
\coma \ldots \coma c_{p-1}\, \uno_{s_{p-1}-s_{p-2}} \coma  
\big(\min\{\la_i(S)\coma c_p \}\big)_{i=s_{p-1}+1}^d
\right)\ , 
\eeq
or $\delta(S\coma\ca\coma \cG_0)  = 
\big(\min\{\la_i(S)\coma c_1 \}\big)_{i\in \I_d}$ (if $p=1$, the co-feasible case), 
where all data in this formula 
can be explicitly computed in terms of $S$, $\ca$ and the index $s_{p-1}\,$. 
Indeed, this expression depends on $\cG_0$ and 
$N$ only through the index $s_{p-1}$ which determines the previous 
indexes and constants by Theorem \ref{teo estruc min loc detallada}, and the co-feasible part which begins at 
$s_{p-1}$, so it determines $s_p$ and  $c_p\,$, by  Proposition \ref{pro index fea para loc min} via Eq. \eqref{eq para cp}.
Hence we shall denote $s_{p-1}= s_{p-1}(\cG_0)$ . 
%
\EOE

\end{rem}

\pausa
We end this section with the following result, which compares  the co-feasibility constants corresponding to different 
co-feasible indexes.

\begin{cor}\label{cor sobre constantes cofeas}
Let $S\in\matpos$ and $\asubi$ with $k\ge d$ and assume that $r,\, s\in\I_{d-1}$ are co-feasible indexes for $S$ and $\ca$. Denote by 
$c(s)$ and $c(r)$ their co-feasibility constants.
Then,   $$s<r  \implies c(s)\geq c(r) \ .$$
\end{cor}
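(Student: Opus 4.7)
The plan is to argue by contradiction: assume $s<r$ yet $c(s)<c(r)$, and exploit the two trace identities together with the majorization built into the definition of co-feasibility to derive an impossible strict inequality. The ingredients I need are the defining identities $\sum_{i=s+1}^k a_i = \sum_{i=s+1}^d(\la_i(S)-c(s))^+$ and $\sum_{i=r+1}^k a_i = \sum_{i=r+1}^d(\la_i(S)-c(r))^+$ (from Definition \ref{caso cofea} applied to the truncated data), the submajorization $\ca^{(s)}\prec ((\la_i(S)-c(s))^+)_{i=s+1}^d$, and the strict bound $c(r)<\la_{r+1}(S)$.

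First I would subtract the two trace identities and split the resulting sum at index $r$, obtaining
\begin{equation*}
\sum_{i=s+1}^r a_i \;=\; \sum_{i=s+1}^r (\la_i(S)-c(s))^+ \;+\; \sum_{i=r+1}^d \big[(\la_i(S)-c(s))^+ - (\la_i(S)-c(r))^+\big]\,.
\end{equation*}
Under the hypothetical inequality $c(s)<c(r)$ each bracket in the tail sum is nonnegative, since $c\mapsto (\la_i(S)-c)^+$ is non-increasing. Moreover, the term at $i=r+1$ is strictly positive: co-feasibility of the index $r$ guarantees $c(r)<\la_{r+1}(S)$, so both quantities $\la_{r+1}(S)-c(s)$ and $\la_{r+1}(S)-c(r)$ are genuinely positive, and their difference equals $c(r)-c(s)>0$. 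Hence the right-hand side of the displayed identity exceeds $\sum_{i=s+1}^r (\la_i(S)-c(s))^+$ strictly.

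The final step is to read off the partial-sum inequality coming from the co-feasible majorization at $j=r-s$. Both lists $\ca^{(s)}$ and $((\la_i(S)-c(s))^+)_{i=s+1}^d$ are already arranged in nonincreasing order --- the first by the hypothesis $\ca\in(\R^k_{>0})\da$, the second because $\la(S)$ is so --- hence the majorization gives directly $\sum_{i=s+1}^r a_i \leq \sum_{i=s+1}^r (\la_i(S)-c(s))^+$. Substituting this into the displayed identity produces the contradiction $\sum_{i=s+1}^r a_i < \sum_{i=s+1}^r a_i$, and therefore $c(s)\geq c(r)$. The only delicate point is the strict positivity at $i=r+1$, which hinges entirely on the constraint $c(r)<\la_{r+1}(S)$ in the definition of co-feasibility; everything else is a routine combination of the monotonicity of $c\mapsto (\la_i(S)-c)^+$ with the ordered-partial-sum form of majorization.
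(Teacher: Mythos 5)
Your proof is correct and follows essentially the same strategy as the paper's: both assume $c(s)<c(r)$, combine the two trace identities with the partial-sum majorization inequality $\sum_{i=s+1}^r a_i\le\sum_{i=s+1}^r(\la_i(S)-c(s))^+$, and then derive a contradiction from the pointwise monotonicity of $c\mapsto(\la_i(S)-c)^+$ together with a strict drop at $i=r+1$ forced by $c(r)<\la_{r+1}(S)$. The only cosmetic difference is that you subtract the trace identities first and isolate $\sum_{i=s+1}^r a_i$, while the paper keeps everything on one side and shows the resulting inequality would be violated; both are the same computation rearranged.
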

\begin{proof}
By Proposition \ref{prop cond suf para cofea}, 
$\ca^{(s)}\prec \big(\,(\la_i(S)-c(s)\,)^+\big)_{i=s+1}^d$
and $\ca^{(r)}	\prec \big(\,(\la_i(S)-c(r)\,)^+\big)_{i=r+1}^d\,$.
Then 
\beq\label{r y s}
\sum_{i=r+1}^ka_i=\sum_{i=r+1}^d (\la_i(S)-c(r)\,)^+ \py \sum_{i=s+1}^r a_i\leq \sum_{i=s+1}^r(\la_i(S)-c(s)\,)^+ \ .
\eeq
Therefore, 
$$ 
\sum_{i=s+1}^d(\la_i(S)-c(s)\,)^+ = \sum_{i=s+1}^k a_i
\le \sum_{i=s+1}^r(\la_i(S)-c(s)\,)^+ +\sum_{i=r+1}^d (\la_i(S)-c(r)\,)^+
\ .
$$
But if $c(s)< c(r) $ then  $(\la_i(S)-c(s)\,)^+ \ge (\la_i(S)-c(r)\,)^+$ for every $i \in \I_d\,$, 
and moreover, we have that 
 $(\la_{r+1}(S)-c(s)\,)^+ > (\la_{r+1}(S)-c(r)\,)^+$ because 
$\sum\limits_{i=r+1}^ka_i>0 \stackrel{\eqref{r y s}}{\implies}  c(r) <\la_{r+1}(S)$. 
\end{proof}

\section{Main results}\label{sec cuatris}

In this section we state and prove our main result namely, that local minimizers of GFOD's are actually global minimizers. 
This is achieved by considering in detail the results obtained in Section \ref{sec tris} related with the spectral structure of local minimizers
of GFOD's functions, and the notion of co-feasible index. We first consider the case when $k\geq d$. 

\subsection{When $k\geq d$}\label{sec 4.1.}
Throughout this subsection we assume that $k \ge d$. 
Notice that Eqs. \eqref{eq el la de F} and \eqref{eq el la de F2} together with 
Theorem \ref{teo estruc min loc detallada} and Proposition \ref{pro index fea para loc min} give a detailed description of the spectral structure of 
local minimizers of GFOD problems. With the notation of these results, it is worth pointing out the key role played by the (co-feasible) index $s_{p-1}$
in the determination of the complete spectral structure of $S-S_0$ and $S_0$ (see Definition \ref{caso cofea}). 

\pausa
The basic idea for what follows is to replace $s_{p-1}$ by an arbitrary co-feasible index $r$, to reproduce the algorithm given in 
Theorem \ref{teo estruc min loc detallada} and get indexes and constants in terms of $r$ (which a priori  
are not associated to any minimizer $\cG_0$). Then, we  shall
show that there exists a unique ``correct" 
index $r$ (i.e. co-feasible and admissible, see Definition 
\ref{fed el espectro posta abstracto} below) 
which only depends on $\la(S)$ and $\ca$, so that  it 
must coincide with $s_{p-1}(\cG_0)\,$.

\begin{fed}\label{fed el espectro posta abstracto}\rm
Let $S\in\matpos$ and $\ca=(a_i)_{i\in\I_k}\in(\R_{>0}^k)\da$. For a co-feasible index $r\in\I_{d-1}\cup\{0\}$ 
let $q=q(r)\in\I_d$, 
$0 =s_0(r)<s_1(r)<\ldots<s_{q-1}(r)=r<s_q\leq d\le k$ and $c_1(r),\ldots,\,\,c_q(r)$ 
be computed according to the following recursive algorithm (which only depends on $r$, $\la(S)$ and $\ca$):
\ben
\item If $r=0$, set $q=q(r)= 1$ and $s_0(r) = s_{q-1}(r)=r=0 $ (and go to item 4.). 
\item If $r>0$, using the numbers $P_{i\coma j}$ defined in Notation \ref{nota los his}, 
the index 
$$s_1(r) = \max \, \big\{1\leq j \le r \, :\, 
P_{1\coma j} = \min\limits_{i\le r}  \, P_{1\coma i} \, \big\} \ \ , \py 
c_1(r) = P_{1\coma s_1(r)}\ .
$$
\item If the index $s_j(r)$ is already computed and $s_j(r)<r\,$, then
\beq\label{sjr}
s_{j+1}(r) = \max \, \big\{s_j(r)< j \le r \, :\, 
P_{s_j(r)+1\coma j} = \min\limits_{s_j(r)< i\le r}  \, P_{s_j(r)+1 \coma i} \, \big\} \ ,
\eeq
and $ c_{j+1}(r) = P_{s_j(r)+1\coma s_{j+1}(r)}\,$.
\item If 
$s_j(r)=r\,$, we set $q= q(r) = j+1$ (so that $s_{q-1}(r)=r$), 
and we define 
$c_q(r)$ and $s_q(r)$ (with 
$c_q(r)<\la_{r+1} 
$ and $r= s_{q-1}(r)<s_q(r)\leq d$) that are uniquely determined by
\beq\label{eq para cq1}
\sum_{i= r+1}^k a_i=\sum_{i=r+1}^d \big(\la_i(S)-c_q(r)\,\big)^+ \py
\eeq 
\beq\label{eq para cq2}
s_q(r)=\max\{r+1\leq i\leq d: \la_i(S)-c_q(r)>0\}\ .
\eeq 
In particular, $s_q(r)=\max\{i\in\I_d: \la_i(S)-c_q(r)>0\}$ since $\la(S)=\la(S)\da$.
\item If $r>0$ we denote by $\delta(\la(S)\coma\ca\coma r)\in \R^d$ the vector given by 
\een
\beq\label{delta r}
\delta(\la(S)\coma\ca\coma r) = 
\left( 
c_1(r)\,\uno_{s_1(r)} 
\coma 
\ldots \coma c_{q-1}(r)\, \uno_{s_{q-1}(r)-s_{q-2}(r)} \coma  
\big(\,\min\{\la_i(S)\coma c_q(r) \}\,\big)_{i=r+1}^d
\right)\ ,  
\eeq 
\ben 
\item [ ] and  $\delta(\la(S)\coma\ca\coma 0) = 
\big(\,\min\{\la_i(S)\coma c_1(0) \}\,\big)_{i\in \I_d}\,
$. 
It is easy to see (by construction) that 
\beq\label{traza bien}
\tr \delta	(\la(S)\coma\ca\coma r)  = \tr \, (S )- \tr \, (\ca)  \ .
\eeq
\een
Finally, we shall say that the index $r$ is {\bf admissible} if $r=0$ or $r>0$ and $c_{q-1}(r)<c_q(r)\,$. 
\EOE
\end{fed}

\begin{rem}\label{era f y a}
Consider a fixed strictly convex u.i.n. $N$ in $\mat$. Let $\cG_0\in\tcal$ 
be a local (or global) minimizer of 
$\Theta_{(N\coma S\coma \ca)}=\Theta:\tcal \rightarrow \R_{\geq 0}\,$.  
Assume that $k\ge d$.

\pausa
 We can apply the previous results to $\cG_0$; thus, we consider 
$p\geq 1$ and constants $c_1<\ldots<c_p$ and indexes $s_0=0<s_1<\ldots <s_p\leq d$ 
as in Theorem \ref{teo estruc min loc detallada} and Proposition \ref{pro index fea para loc min}. In particular, we get that $s_{p-1}$
is a co-feasible index which is also admissible since, if $s_{p-1}>0$, then  
$c_{p-1}<c_p$ by definition
(see Theorem \ref{teo applic1}). 
The idea of what follows is to show that 
$s_{p-1} $ (denoted $s_{p-1}(\cG_0)$ in Remark \ref{el delta con N}) 
is the {\bf unique} index which has both properties (for any norm $N$). 
First, we need to verify some properties of the vector 
$\delta(\la(S)\coma \ca \coma r)$ for a co-feasible and admissible index.
\EOE
\end{rem}


\begin{pro}\label{pro para deltaop} 
Let $S\in\matpos$ and $\ca=(a_i)_{i\in\I_k}\in(\R_{>0}^k)\da$ (with $k\ge d$). 
Let $r\in \I_{d-1}\cup\{0\}$ be a co-feasible index. Then,
with $p=q(r)$, $s_j=s_j(r)$, $c_j=c_j(r)$  for $j\in\I_p\,$, and $\delta= \delta(\la(S)\coma\ca\coma r)$  
as in Definition \ref{fed el espectro posta abstracto}, we have that: 
\ben
\item If $p>1$ then $c_1<\ldots<c_{p-1}\,$. 
\een
If we also assume that $r$ is admissible, then 
$c_{p-1}<c_p = \max\limits_{i \in \I_d}{\delta_i}$ and:
\ben
\item[2.]  $\la_{s_{p-1}+1}\ge \la_{s_p}> c_p$ and $\la_i(S) > c_j$ , for every $s_{j-1}+1\leq i\leq s_j$ and $j\in\I_{p-1}\,$. Then \beq\label{acota}
\delta_i \le \min \{ c_p \coma \la_i\} \peso{for every} i \in \I_d \ .
\eeq
\item[3.] If $p>1$ then $(a_i)_{i=s_{j-1}+1}^{s_j}\prec \big(\, \la_i(S)-c_j\big)_{i=s_{j-1}+1}^{s_j}\in \R_{>0}^{s_j-s_{j-1}}$, for 
every $j\in \I_{p-1}$. 
\item[4.] $(a_i)_{i=s_{p-1}+1}^{k}\prec \big(\,(\la_i(S)-c_p)^+\big)_{i=s_{p-1}+1}^{d}\in \R_{\geq 0}^{d-s_{p-1}}$.
\een
\end{pro}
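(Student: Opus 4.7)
My plan is to prove the four items in order, exploiting the recursive structure of Definition \ref{fed el espectro posta abstracto}. For Item 1, I argue by induction on $j$: the maximality in the definition of $s_j$ forces $P_{s_{j-1}+1,\, l} > c_j$ for every $l \in (s_j, r]$. Writing $P_{s_{j-1}+1,\, l}$ as a convex combination of $c_j = P_{s_{j-1}+1,\, s_j}$ and $P_{s_j+1,\, l}$, weighted by the cardinalities of the respective sub-intervals, this strict inequality forces $P_{s_j+1,\, l} > c_j$. Specializing to $l = s_{j+1}$ (a valid choice in $(s_j, r]$ whenever $j+1 \le p-1$) yields $c_{j+1} > c_j$, giving the strict chain of inequalities.

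For Item 2 I use admissibility ($c_{p-1} < c_p$ when $p \geq 2$; vacuous when $r = 0$) as the extra input. The bound $\la_{s_p} > c_p$ is immediate from \eqref{eq para cq2}, and monotonicity of $\la(S)$ gives $\la_{s_{p-1}+1} \geq \la_{s_p}$. To prove $\la_{s_j} > c_j$ for every $j \in \I_{p-1}$, I split into two cases. For $j \leq p-2$, the recursion forces $c_{j+1} \leq P_{s_j+1,\, s_j+1} = \la_{s_j+1} - a_{s_j+1} < \la_{s_j+1} \leq \la_{s_j}$, and combined with $c_j < c_{j+1}$ from Item 1, this gives $c_j < \la_{s_j}$. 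For $j = p-1$, the chain $\la_{s_{p-1}} \geq \la_{s_p} > c_p > c_{p-1}$ works. Inspection of the piecewise formula \eqref{delta r} then yields the bound $\delta_i \leq \min\{c_p, \la_i\}$; for $i \leq s_{p-1}$ one has $\delta_i = c_j \leq c_{p-1} < c_p$ and $\delta_i = c_j < \la_{s_j} \leq \la_i$, while for $i > s_{p-1}$ the bound is built into the definition of $\delta$. Since $\delta_{s_{p-1}+1} = c_p$, this is also the maximum.

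For Item 3, I observe that both $(a_i)_{i=s_{j-1}+1}^{s_j}$ and $(\la_i - c_j)_{i=s_{j-1}+1}^{s_j}$ are already arranged in decreasing order on the given interval (the second strictly positive thanks to Item 2), so majorization reduces to inequalities among direct partial sums. The equality of total sums is immediate from the identity $c_j = P_{s_{j-1}+1,\, s_j}$, and for $l \in (s_{j-1}, s_j]$ the partial-sum inequality $\sum_{i=s_{j-1}+1}^{l} a_i \leq \sum_{i=s_{j-1}+1}^{l} (\la_i - c_j)$ translates to $P_{s_{j-1}+1,\, l} \geq c_j$, which holds by the minimality property defining $c_j$. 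Item 4 is essentially a restatement of the co-feasibility hypothesis at index $r = s_{p-1}$: the majorization in Definition \ref{caso cofea} is precisely the claimed one, and \eqref{eq para cq1} identifies the constant $c_p$.

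I expect the most delicate step to be Item 2, specifically the bifurcation establishing $\la_{s_j} > c_j$: stages $j \leq p-2$ use only Item 1 plus the trivial bound $c_{j+1} \leq h_{s_j+1}$, while the final stage $j = p-1$ is where admissibility is essential (and is in fact the whole point of introducing that hypothesis). Once this is settled, Items 3 and 4 become direct verifications from the definitions.
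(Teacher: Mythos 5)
Your proof is correct and follows essentially the same approach as the paper's: the convex-combination identity of $P_{s_{j-1}+1,\,l}$ in terms of $c_j$ and $P_{s_j+1,\,l}$ for Item~1, the reduction of the partial-sum inequalities to the minimality property of $P_{s_{j-1}+1,\,\cdot}$ for Item~3, and the direct appeal to the definition of co-feasibility for Item~4 are exactly the paper's ingredients. The only cosmetic deviation is in Item~2, where the paper uses the single chain $c_j<c_p<\la_{s_{p-1}+1}\le\la_i$ for all $j\in\I_{p-1}$ rather than your split into $j\le p-2$ (via $c_j<c_{j+1}\le h_{s_j+1}<\la_{s_j}$) and $j=p-1$; both are valid, and your split has the small advantage of isolating admissibility as needed only at the last stage.
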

\begin{proof}
\pausa 
1. The case $p=2$ is trivial. 
If $p>2$, assume that there exists  $j\in \I_{p-2}$ such that 
$c_j\geq c_{j+1}\, $. Then, notice that 
$$ 
P_{s_{j-1}+1\coma s_{j+1}}=\frac{s_j-s_{j-1}}{s_{j+1}-s_{j-1}}\ c_j + \frac{s_{j+1}-s_{j}}{s_{j+1}-s_{j-1}}\ c_{j+1}\leq c_j 
\ ,
$$
which contradicts the definition of $s_j$ in Eq. \eqref{sjr}, 
since $s_{j+1}\leq s_{p-1}=r$. Thus, $c_1<\ldots<c_{p-1}$. 

\pausa 
If $r=s_{p-1}$ is an 
admissible index, then
$c_{p-1}<c_p= \max\limits_{i \in \I_d}{\delta_i}$ by definition and Eq. \eqref{delta r}.
 
\pausa
2. 
By Eq. \eqref{eq para cq2}, 
we have that $c_p< \la_{i}(S)$ for $s_{p-1}+1\leq i\leq s_p\,$.
Therefore, if 
$$
j\in\I_{p-1}  \py s_{j-1}+1\leq i\leq s_j 
\implies c_j<c_p < \la_{s_{p-1}+1}(S)\leq \la_i(S) \ , 
$$
since $i\leq s_j\leq s_{p-1}<s_{p-1}+1$ and $\la(S)\in (\R^d)\da$.

\pausa
3. For $j\in\I_{p-1}$ and $s_{j-1}+1\leq m\leq s_j\,$, we have that 
\beq\label{eq majo check1}
\sum_{i=s_{j-1}+1}^ma_i\leq \sum_{i=s_{j-1}+1}^m (\la_i(S)-c_j) 
\iff c_j\leq \frac{1}{m-s_{j-1}}\ \sum_{i=s_{j-1}+1}^m\ \la_{i}(S)-a_i 
\stackrel{\eqref{Pjr}}{=}P_{s_{j-1}+1\coma m}
\eeq
(the equivalence also holds for equalities). 
Using the definition of $c_j$ (item 2. of Definition \ref {fed el espectro posta abstracto}), 
we see that the inequalities to the right in Eq. \eqref{eq majo check1} hold for every such index $m$, with equality 
for $m = s_j$ (by definition of $c_j$ and $s_j\,$).
We have proved that  $(a_i)_{i=s_{j-1}+1}^{s_j}\prec \big(\, \la_i(S)-c_j\big)_{i=s_{j-1}+1}^{s_j}
\,$. 

\pausa Item 4 follows immediately from the fact that $r=s_{p-1}$ is a co-feasible index 
(see 
Definition \ref{cofea index}).
\end{proof}

\begin {cor}\label{un cor}
Let $S\in\matpos$ and $\ca=(a_i)_{i\in\I_k}\in(\R_{>0}^k)\da$  (with $k\ge d$). 
Let $r\in \I_{d-1}\cup\{0\}$ be a co-feasible index which is also admissible. Then 
$\ca\prec  \la(S)- \delta (\la(S)\coma \ca\coma r) $. 
\end{cor}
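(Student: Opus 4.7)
Set $\beta \igdef \la(S)-\delta(\la(S)\coma\ca\coma r)\in\R^d$. The plan is to first verify that $\beta$ is already ordered non-increasingly (so $\beta=\beta\da$), then verify the partial-sum inequalities in blocks dictated by the indexes $s_j=s_j(r)$, using items 3 and 4 of Proposition \ref{pro para deltaop} and the monotonicity $c_1<\ldots<c_{p-1}<c_p$ from item 1 (together with admissibility). The trace equality $\sum_{i=1}^k a_i = \sum_{i=1}^d \beta_i$ is immediate from Eq. \eqref{traza bien}.

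First I would show $\beta=\beta\da$. By construction, on the block $\{s_{j-1}+1\coma \ldots\coma s_j\}$ with $j\in\I_{p-1}$ one has $\beta_i=\la_i(S)-c_j$, which is non-increasing since $\la(S)=\la(S)\da$; on the last block $\{r+1\coma \ldots\coma d\}$ one has $\beta_i=(\la_i(S)-c_p)^+$, which is also non-increasing. At a junction $i=s_j<s_{p-1}$, $\beta_{s_j}-\beta_{s_j+1}=(\la_{s_j}(S)-\la_{s_j+1}(S))+(c_{j+1}-c_j)\ge 0$ by item 1 and the ordering of $\la(S)$. At the junction $i=s_{p-1}=r$, item 2 of Proposition \ref{pro para deltaop} gives $\la_{r+1}(S)>c_p$, so $\beta_{r+1}=\la_{r+1}(S)-c_p$, and the same kind of computation together with $c_{p-1}<c_p$ gives $\beta_r\ge \beta_{r+1}$. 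Hence $\beta\in (\R_{\ge 0}^d)\da$.

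Next, I would split the verification of $\sum_{i=1}^m a_i\le \sum_{i=1}^m \beta_i$ for each $m\in\I_d$ according to which block $m$ belongs to. For $m\in\{s_{j-1}+1\coma\ldots\coma s_j\}$ with $j\in\I_{p-1}$, item 3 of Proposition \ref{pro para deltaop} gives
\[
\sum_{i=s_{j-1}+1}^{m}a_i\ \le\ \sum_{i=s_{j-1}+1}^{m}(\la_i(S)-c_j)\ =\ \sum_{i=s_{j-1}+1}^{m}\beta_i\ ,
\]
together with the block trace equality (the $m=s_j$ case) $\sum_{i=s_{j-1}+1}^{s_j}a_i=\sum_{i=s_{j-1}+1}^{s_j}\beta_i$. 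Telescoping the latter over $j$ yields $\sum_{i=1}^{s_j}a_i=\sum_{i=1}^{s_j}\beta_i$ for each $j\in\I_{p-1}$, and adding the displayed partial-sum inequality to the corresponding telescoped equality produces $\sum_{i=1}^m a_i\le \sum_{i=1}^m \beta_i$.

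Finally, for $m\in\{r+1\coma\ldots\coma d\}$ I would invoke item 4, which states that $(a_i)_{i=r+1}^k\prec ((\la_i(S)-c_p)^+)_{i=r+1}^d$, and in particular
\[
\sum_{i=r+1}^m a_i\ \le\ \sum_{i=r+1}^m (\la_i(S)-c_p)^+\ =\ \sum_{i=r+1}^m\beta_i\ \peso{for every} r+1\le m\le d\ ,
\]
which combined with the telescoped equality $\sum_{i=1}^{r}a_i=\sum_{i=1}^{r}\beta_i$ (or the trivial case $r=0$) again gives $\sum_{i=1}^m a_i\le \sum_{i=1}^m \beta_i$. The main subtlety I expect to navigate is the mismatched lengths $k\ge d$: at $m=d$ the inequality $\sum_{i=1}^d a_i\le \sum_{i=1}^d \beta_i$ may be strict (the deficit is $\sum_{i=d+1}^k a_i\ge 0$), yet the required overall trace identity $\sum_{i=1}^k a_i=\sum_{i=1}^d \beta_i$ holds by Eq. \eqref{traza bien}, so the defining conditions for majorization $\ca\prec \beta$ are all satisfied.
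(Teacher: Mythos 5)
Your proof is correct. It uses the same block decomposition and the same key input — items 3 and 4 of Proposition \ref{pro para deltaop} — as the paper, so the approach is essentially the same. The difference lies only in how the block-wise majorizations are glued together: the paper cites the concatenation rule for majorization (Remark \ref{desimayo}, item 4) in a single line, whereas you verify the resulting partial-sum inequalities directly, which forces you to first check that $\beta = \la(S)-\delta(\la(S)\coma\ca\coma r)$ is already in non-increasing order (a step you handle with items 1 and 2 of Proposition \ref{pro para deltaop}, and which the abstract concatenation lemma absorbs automatically). Your longer route has the small advantage of sidestepping the fact that Remark \ref{desimayo}(4) is stated for blocks of equal length, while the last block here compares a vector in $\R^{k-r}$ against one in $\R^{d-r}$ with $k\ge d$; your telescoping argument plus Eq. \eqref{traza bien} handles this mismatch explicitly.
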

\proof 
The relation $\ca\prec \la(S)- \delta(\la(S)\coma \ca\coma r)$  follows from 
items 3 and 4 of Proposition \ref{pro para deltaop}, since $x\prec y$ and 
$z\prec w \implies (x\coma z) \prec (y\coma w)$ (Remark \ref{desimayo}).
%
%
\QED

\begin{teo}\label{teo paren las rotativas}
Let $S\in\matpos$ and $\ca=(a_i)_{i\in\I_k}\in(\R_{>0}^k)\da$ (with $k\ge d$). 
Then there is a unique co-feasible
and admissible index $ s \in \I_{ d-1} \cup\trivial$, and this $s$ is the minimal co-feasible index.
\end{teo}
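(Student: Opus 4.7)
The plan is to prove existence and then a monotonicity statement yielding uniqueness plus minimality at once.

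\emph{Existence.} Fix any strictly convex u.i.n.\ $N$ on $\mat$, e.g.\ the Frobenius norm. Since $\tcal$ is compact and $\Theta_{(N,S,\ca)}$ continuous, a local (indeed global) minimizer $\cG_0\in\tcal$ exists; by Proposition \ref{pro index fea para loc min} the index $s_{p-1}(\cG_0)\in\I_{d-1}\cup\{0\}$ is co-feasible, and the strict ordering $c_1<\cdots<c_p$ of the eigenvalues of $D=(S-S_{\cG_0})|_W$ from Notation \ref{nuevas notas 2} gives $c_{q-1}<c_q$, so this index is also admissible.

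\emph{Uniqueness and minimality.} I will show that \emph{if $s$ is co-feasible and admissible and $r<s$, then $r$ is not co-feasible}; this single implication delivers uniqueness (two such indexes would contradict it when applied to the pair) together with minimality. Suppose for contradiction that $r<s$ are both co-feasible and $s$ is admissible, so $c(r)\ge c(s)$ by Corollary \ref{cor sobre constantes cofeas}. Running the algorithm of Definition \ref{fed el espectro posta abstracto} for $r$ and for $s$ in parallel, a routine induction based on the max-argmin rule shows that the two algorithms produce identical $s_i$'s and $c_i$'s up to the first stage $j^*$ at which $s_{j^*}(s)\ge r$: whenever a minimum over $(s_{i-1},s]$ is attained at an index $<r$, the same index is picked by the minimization over the smaller window $(s_{i-1},r]$. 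Then the partial majorization $\ca^{(r)}\prec ((\la_i-c(r))^+)_{i>r}$ coming from co-feasibility of $r$, combined with $\la_i>c(r)$ on $r<i\le s_{q(r)}(r)$, yields the key pointwise lower bound $P_{r+1,t}\ge c(r)$ valid throughout $r<t\le s_{q(r)}(r)$.

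In Case A ($s_{j^*}(s)=r$) the next stage of the algorithm for $s$ produces $c_{j^*+1}(s)=\min_{r<i\le s}P_{r+1,i}$; admissibility of $s$ and Proposition \ref{pro para deltaop}(1) give $c_{j^*+1}(s)<c(s)\le c(r)$, whereas confining the minimization to the window $(r,s_{q(r)}(r)]$ via the above inequality forces $c_{j^*+1}(s)\ge c(r)$---a contradiction. Case B ($s_{j^*}(s)>r$) is treated analogously using the convex-combination identity
\[
P_{s_{j^*-1}(s)+1,\,s_{j^*}(s)}=\frac{(r-s_{j^*-1}(s))\,P_{s_{j^*-1}(s)+1,\,r}+(s_{j^*}(s)-r)\,P_{r+1,\,s_{j^*}(s)}}{s_{j^*}(s)-s_{j^*-1}(s)}
\]
to transfer the bound $P_{r+1,\cdot}\ge c(r)$ into the same kind of contradiction with admissibility of $s$.

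\emph{Main obstacle.} The technical heart is the strict case $c(r)>c(s)$, where the two water-levels $s_{q(r)}(r)<s_{q(s)}(s)$ disagree. Verifying that the stage-$(j^*+1)$ minimizer for $s$ actually lies in the controlled window $(r,s_{q(r)}(r)]$ requires combining the tight partial majorizations of Proposition \ref{pro para deltaop}(3)--(4) applied to $s$ with the co-feasibility constraint for $r$, so as to control the $a_i$-mass between the two water-levels; this is the delicate step that closes the argument.
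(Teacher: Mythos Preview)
Your existence argument and the synchronization induction (the two algorithms agree up to the first stage $j^*$ with $s_{j^*}(s)\ge r$) are correct, and your reduction of uniqueness and minimality to the single implication ``$s$ admissible and $r<s$ co-feasible is impossible'' is sound. The problem is that the implication itself is not proved. In Case~A your inequality chain $c_{j^*+1}(s)<c(s)\le c(r)$ together with $P_{r+1,t}\ge c(r)$ only yields a contradiction when $s\le s_{q(r)}(r)$; when $c(r)>c(s)$ one may have $s_{q(r)}(r)<s<s_{q(s)}(s)$, and then the range of the minimization defining $c_{j^*+1}(s)$ escapes your ``controlled window''. You flag this as the ``Main obstacle'' but give only a verbal indication (``combining the tight partial majorizations \ldots''), not an argument; there is no reason to believe the minimizer must land in $(r,s_{q(r)}(r)]$. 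Case~B is likewise only asserted: the convex-combination identity is stated, but you do not explain which lower bound on $P_{s_{j^*-1}(s)+1,r}$ you use, nor why the resulting inequality contradicts admissibility of $s$. As written, the proof handles only the easy subcase (essentially $c(r)=c(s)$) and leaves the substantive part open.

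The paper avoids this difficulty entirely by abandoning the stage-by-stage comparison of constants and instead comparing the full vectors $\delta=\delta(\la,\ca,r)$ and $\delta^*=\delta(\la,\ca,s)$. The key facts are (i) $\tr\delta=\tr S-\tr\ca=\tr\delta^*$, (ii) for the admissible index one has the uniform bound $\delta_i\le\min\{c_p,\la_i\}$ for all $i$ (Proposition~\ref{pro para deltaop}, Eq.~\eqref{acota}), and (iii) $c_q^*\ge c_p$ by Corollary~\ref{cor sobre constantes cofeas}. With these three ingredients one shows $\delta\leqp\delta^*$ entrywise in every case, whence $\delta=\delta^*$, which is then seen to force $r\le s$ --- a contradiction. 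This trace-plus-pointwise argument sidesteps precisely the window-location problem that blocks your approach.
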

\begin{proof}
Assume that there exist two co-feasible  indexes $0\leq s<r\leq d-1$ such that 
$r$ is admissible. We show that this leads to a contradiction.
Indeed, let $s_0=0<s_1<\ldots <s_{p-1}=r<s_p\leq d$ and $c_1<\ldots <c_p$ be the indexes and constants corresponding to 
Definition \ref{fed el espectro posta abstracto}, for the index $ r$ 
(i.e., we rename $p=q(r), \ s_j = s_j(r) $ and $c_j = c_j(r)$ for $j\in \I_p$). Let $\la \igdef \la(S)\in (\R_{\geq 0}^d)\da$ and consider 
\beq\label{eq paren las rot1}
\delta=\delta(\la\coma \ca\coma r)= 
\left(c_1\, \uno_{s_1-s_0} \coma \ldots \coma c_{p-1}\,\uno_{s_{p-1}-s_{p-2}}
\coma  \big(\min\{\la_i\coma c_p\}\big)_{i=s_{p-1}+1}^d\, \right)\ .
\eeq
Similarly, consider $q = q(s)$ and 
 $s_0^*=0 <s^*_1<\ldots <s^*_{q-1}=s<s_q^*\leq d$ and $c^*_1<\ldots < c^*_{q-1}$ and $c_q^*$  be the indexes and constants corresponding to 
Definition \ref{fed el espectro posta abstracto}, for the index $s$. We also consider 
\beq\label{eq paren las rot2}
\delta^*=\delta(\la\coma \ca\coma s) =
\left(c^*_1\, \uno_{s^*_1-s^*_0} \coma \ldots\coma c^*_{q-1}\,\uno_{s^*_{q-1}-s^*_{q-2}}
\coma  \big(\min\{\la_i\coma c^*_q\}\big)_{i=s_{q-1}^*+1}^d\, \right)\ .
\eeq
If $\delta^*= \delta$ then by Eqs.  \eqref{eq para cq2}, \eqref{eq paren las rot1} 
and \eqref{eq paren las rot2}, 
$s_{q}^*= s_p = \max \{i \in \I_d : \delta_i < \la_i(S)\}$, and 
$c_{q}^*= c_p =\delta_{s_{p}}\,$. But in this case  
$r = s_{p-1}= \min\{i \in \I_{d-1} :\delta_{i+1} = c_{p}\} \le s_{q-1}^*= s$, a contradiction. Hence 
$\delta^*\neq \delta$. 

\pausa
{\bf Case 1.} 
Assume that there exists $1\leq j\leq \min\{p-1\coma q-1\}$ such that 
$$
\text{$s_i=s^*_i$ \ \ (and then also $c_i=c^*_i$) \ \  for \ $0\leq i\leq j-1$ \ , \ \   but\ \   $s_j\neq s^*_j$ .}
$$
Next we show that this leads to a contradiction ($\delta^*= \delta$). Indeed, 
since $s_{j-1} = s_{j-1}^*\,$, by construction
$$
s_j=\max\{s_{j-1}< i\leq r 
: \ P_{s_{j-1}+1\coma i}=\min_{s_{j-1}+1\leq \ell \leq r }
P_{s_{j-1}+1\coma \ell} \} \peso{with} c_j=P_{s_{j-1}+1\coma s_j}
$$
and 
$$
s_j^*=\max\{s_{j-1}< i\leq s 
: \ P_{s_{j-1}+1\coma i}=\min_{s_{j-1}+1\leq \ell \leq s}
P_{s_{j-1}+1\coma \ell} \} \peso{with}
 c^*_j=P_{s_{j-1}+1\coma s^*_j}\ .
$$
Using that the limits $s<r$, then $\min\limits_{s_{j-1}+1\leq \ell \leq s} P_{s_{j-1}+1\coma \ell} \ge 
\min\limits_{s_{j-1}+1\leq \ell \leq r} P_{s_{j-1}+1\coma \ell}\,$. 
Since $s_j^* \neq s_j\,$, this fact easily shows 
that 
\beq\label{cosas}
c_j\leq c^*_j \py 
s_{q-1}^*=s <s_j\leq r  \ .
\eeq 
On the other hand, by Corollary \ref{cor sobre constantes cofeas} we have that 
$c^*_q=c_q(s)\geq c_p(r)= c_p\,$, since they 
are the co-feasible constants corresponding to the co-feasible indexes $s^*_{q-1}=s<r=s_{p-1}\,$.

\pausa
With these facts we can compare $\delta $ and $\delta^*$: 
\bit
\item  We have that 
$\delta_i=\delta_i^*$ for $1\leq i\leq s_{j-1}= s_{j-1}^*$ 
by hypothesis. 
\item By Eq. \eqref{eq paren las rot1},  \eqref{eq paren las rot2}, 
and  item 1 of Proposition \ref{pro para deltaop} ($c_j^*< \dots<c_{q-1}^*$), 
$$
\delta_i^*\ge c_j^*\stackrel{\eqref{cosas}}{\geq}  c_j = \delta_i 
\peso{for} s_{j-1} = s_{j-1}^*< i\leq s^*_{q-1} \stackrel{\eqref{cosas}}{<}
s_j \ .
$$
\item 
Since $c_p = \max\{\delta_j: j\in \I_{d}\}$ by 
Proposition \ref{pro para deltaop}  ($r$ is admissible),  then 
$$
\delta_i^*= c_q^* \ge c_p 
\ge\delta_i  
\peso{for} s^*_{q-1} <i \le s_q^* 
\ .
$$ 
\item 
Finally,  $\delta_i^*=\la_i\geq \delta_i\,$, for $s^*_q< i\leq d$ 
(item 2 in Proposition \ref{pro para deltaop}). 
\eit
Therefore $\delta\leqp \delta^*$. Since
$\tr(\delta)=\tr(S)-\tr(\ca)=\tr(\delta^*)$ by Eq. \eqref{traza bien}, 
we get that $\delta=\delta^*$, a contradiction. 

\pausa
{\bf Case 2.} If we assume that  $p \le q$ and $s_j=s^*_j$ (and hence $c_j=c^*_j$) for $0\leq j\leq p-1$, 
then 
$$
s_{p-1}=s^*_{p-1}\leq s^*_{q-1}=s<r=s_{p-1}  \ .
$$
{\bf Case 3.} Finally, if 
$q< p$ and $s_j=s^*_j$ (and hence $c_j=c^*_j$) for $0\leq j\leq q-1$, 
then we have that $\delta_i=\delta^*_i$ for $1\leq i\leq s^*_{q-1}=s_{q-1}$. 
Then, by 
Proposition \ref{pro para deltaop}, we have that  
$$
c_p \le c^*_q  \implies 
\delta_i \stackrel{\eqref{acota}}{\le} \min\{c_p\coma \la_i\}\leq 
\min\{c^*_q\coma \la_i\} \stackrel{\eqref{eq paren las rot2}}{=} \delta^*_i\peso{for} s^*_{q-1}< i\leq d\ .
$$
Hence, $\delta\leq \delta^*$. Using that $\tr(\delta)=\tr(\delta^*)$, also in this case  we conclude that $\delta=\delta^*$.
The proof finishes once we notice that one of these three cases should occur. 
\end{proof}

\begin{fed}\label{def espec posta}\rm
Let $S\in\matpos$ with $\la=\la(S)\in (\R_{\geq 0}^d)\da$ and $\ca=(a_i)_{i\in\I_k}\in (\R_{>0}^k)\da$ (with $k\geq d$). 
If $ s \in \I_{ d-1} \cup\trivial$ is the unique
co-feasible and admissible index for $S$ and $\ca$ (which exists by Remark \ref{era f y a}), then we denote by 
 $\delta(\la\coma\ca)\igdef\delta(\la\coma \ca\coma s)$ as in Eq. \eqref {delta r} of 
Definition 
\ref{fed el espectro posta abstracto}.\EOE
\end{fed}

\begin{rem}\label{rem es computable}
With the notation of Definition \ref{def espec posta} above, notice that the vector $\delta(\la\coma \ca)$ can be computed using 
a fast algorithm. Indeed, the notion of co-feasible and admissible index is algorithmic and can be checked using 
a fast routine; once the unique co-feasible and admissible index is computed, the vector $\delta(\la\coma\ca)$ can also be computed 
using a fast algorithm (Definition \ref{fed el espectro posta abstracto}).\EOE
\end{rem}

\begin{teo}\label{teo main1}
Let $S\in\matpos$ with $\la=\la(S)\in (\R_{\geq 0}^d)\da$, $\ca=(a_i)_{i\in\I_k}\in (\R_{>0}^k)\da$ (with $k\geq d$) and 
$\delta(\la\coma \ca)$ as in Definition \ref{def espec posta}. If $N$ is a strictly convex u.i.n. in $\mat$ and 
$\cG_0\in\tcal$
then, the following statements are equivalent:
\ben
\item $\cG_0\in\tcal$ is a global minimizer of $\tnsa$;
\item $\cG_0\in\tcal$ is a local minimizer of $\tnsa$; 
\item $\la(S-S_{\cG_0})=\delta(\la\coma\ca)\da$.
\een
Hence, the global (and local)  minimizers are the same for every strictly convex u.i.n. $N$. 
\end{teo}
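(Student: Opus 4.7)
The plan is to establish the chain $(1) \Rightarrow (2) \Rightarrow (3) \Rightarrow (1)$, where $(1) \Rightarrow (2)$ is trivial. The substantive content has already been developed in Sections~\ref{sec tris} and~\ref{sec cofea}; the present theorem should reduce to a packaging of that structural analysis together with a short compactness argument.

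For $(2) \Rightarrow (3)$, I would invoke the complete spectral description of local minimizers from Remark~\ref{el delta con N}, which states that $\la(S - S_{\cG_0}) = \delta(S, \ca, \cG_0)^\downarrow$ with $\delta(S, \ca, \cG_0)$ of the form in Eq.~\eqref{del SaN}, depending on $\cG_0$ only through the index $s_{p-1}(\cG_0)$. The crucial point is that $s_{p-1}(\cG_0)$ is simultaneously co-feasible (by Proposition~\ref{pro index fea para loc min}) and admissible: indeed, if $p > 1$, then $c_{p-1} < c_p$ by construction in Theorem~\ref{teo applic1}, and if $p = 1$, then $s_{p-1}(\cG_0) = 0$, which is admissible by definition. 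Theorem~\ref{teo paren las rotativas} then forces $s_{p-1}(\cG_0)$ to coincide with the unique co-feasible and admissible index $s$ attached to the pair $(\la, \ca)$. Comparing Eq.~\eqref{del SaN} with Eq.~\eqref{delta r} applied at $r = s$ yields $\delta(S, \ca, \cG_0) = \delta(\la, \ca)$, which is condition $(3)$ by Definition~\ref{def espec posta}.

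For $(3) \Rightarrow (1)$, I would use a compactness argument: the map $\tnsa$ is continuous on the compact manifold $\tcal$, so it attains a global minimum at some $\cG^* \in \tcal$. Since a global minimizer is in particular a local minimizer, the already-established implication $(2) \Rightarrow (3)$ applied to $\cG^*$ gives $\la(S - S_{\cG^*}) = \delta(\la, \ca)^\downarrow$. Hypothesis~$(3)$ states the same equality for $\cG_0$, so $S - S_{\cG_0}$ and $S - S_{\cG^*}$ are self-adjoint matrices with the same non-increasingly ordered spectrum; in particular they share the same singular values, and any u.i.n.\ takes the same value on both. Thus $\tnsa(\cG_0) = N(S - S_{\cG_0}) = N(S - S_{\cG^*}) = \tnsa(\cG^*)$, proving that $\cG_0$ is itself a global minimizer.

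The final assertion—that the set of global (equivalently local) minimizers is independent of the strictly convex u.i.n.\ $N$—is then automatic, since condition~$(3)$ characterizes these minimizers entirely in terms of the vector $\delta(\la, \ca)$, which is computed from $\la(S)$ and $\ca$ alone. No real obstacle is expected at this stage: the main technical effort has already been absorbed into Theorem~\ref{teo paren las rotativas}, and once the uniqueness of the co-feasible and admissible index is in hand, the argument reduces to the bookkeeping outlined above. The only care needed is to verify that the spectral data produced in Remark~\ref{el delta con N} for a local minimizer matches the abstract construction of Definition~\ref{fed el espectro posta abstracto} when evaluated at the same index.
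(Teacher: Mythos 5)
Your proof is correct and follows essentially the same route as the paper: the implication $(2)\Rightarrow(3)$ via Remark~\ref{el delta con N}, Proposition~\ref{pro index fea para loc min} and the uniqueness in Theorem~\ref{teo paren las rotativas}, and $(3)\Rightarrow(1)$ by the compactness argument, exactly as in the published proof.
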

\begin{proof}
Clearly, $1.\Rightarrow 2.$ In order to see $2.\Rightarrow 3.$, we recall Remarks  \ref{el delta}, \ref{el delta con N} 
and \ref {era f y a}, where we have seen that $\la(S-S_{\cG_0})=\delta(S\coma\ca \coma \cG_0)\da$, 
for the vector 
$\delta(\la\coma\ca \coma \cG_0)$ given in Eq. \eqref{del SaN} and 
completely determined by the index called $s_{p-1}(\cG_0)$. By Remark \ref{era f y a} and 
Theorem \ref{teo paren las rotativas}, this  $s_{p-1}(\cG_0)$ is 
the unique  co-feasible and admissible index of Theorem \ref{teo paren las rotativas}. Therefore, by Equations \eqref{del SaN} and \eqref{delta r}, 
$$
\delta (\la(S)\coma \ca) = \delta(S\coma\ca \coma \cG_0) \implies \la(S-S_{\cG_0})=\delta(\la\coma\ca)\da \ .
$$

\pausa
$3.\Rightarrow 1.$ 
Notice that $\Theta$ is a continuous function defined on a compact metric space, so then there exists 
$\cG_1\in\tcal$ that is a global minimizer of $\Theta$ and, in particular, a local minimizer.  
By the already proved $2.\Rightarrow 3.$, we must have that $\la(S-S_{\cG_1})=\delta(\la\coma\ca)\da = \la(S-S_{\cG_0})$.

\pausa
In particular, since $N$ is unitarily invariant 
$$
\tnsa(\cG_0)=N(S-S_{\cG_0})=N(D_{\delta(\la\coma\ca)})= N(S-S_{\cG_1})=\tnsa(\cG_1)\  ,
$$ 
where $D_{\delta(\la\coma\ca)}\in\mat$ denotes the diagonal matrix with main diagonal $\delta(\la\coma\ca)$.
\end{proof}

\pausa
We end this section with the following examples.

\begin{exa} Consider $\cB=\{e_1\coma e_2\}$ the canonical basis of $\C^2$. Let 
$S=3\,e_1\otimes e_1+e_2\otimes e_2\in \matrec{2}^+$ and $\ca=(1\coma 1)$ 
(i.e. $k=d=2$).
Then $S$ is an invertible operator. Consider the vectors $g_1=g_2=e_1$, 
and  $\cG_0=\{g_1\coma g_2\}\in \mathbb T_2(\ca)$. Then $\la(S-S_{\cG_0})= 
\la(e_1\otimes e_1+e_2\otimes e_2) =(1\coma 1)$. 
If $\cG\in \mathbb T_2(\ca)$ is arbitrary, then
$\tr \la(S-S_\cG)= \tr\,S -\tr\, S_{\cG}  = 2$. Hence 
$$\la(S-S_{\cG_0})=(1\coma 1)\prec \la(S-S_\cG)
\implies s(S-S_{\cG_0})=(1\coma 1)\prec_w s(S-S_\cG) \ , 
$$ 
by Remark \ref{desimayo} and Theorem \ref{teo intro prelims mayo}. 
Then $\tnsa(\cG_0)\leq \tnsa(\cG)$, for every u.i.n. $N$. 
Thus, $\cG_0=\{e_1\coma e_1\}$ is a global minimizer of $\tnsa$ in $\mathbb T_2(\ca)$.
Therefore this problem is co-feasible, so that $p=1$, $s_1 =\rk\, S_{\cG_0}= 1$ and $c_1 = \la_2(S) = 1$. 
Notice that in this case $\cG_0$ is not a frame for $\C^2$ (even when $S\in\mathcal M_2(\C)^+$ is invertible and $k\geq d$). \EOE	
\end{exa}

\begin{exa}
Consider $\cB=\{e_1\coma e_2\}$ the canonical basis of $\C^2$. Let  $S=e_1\otimes e_1
\in \matrec{2}^+$  and $\ca=(2,1)$ (with $k=d=2$ again).
Then $S$ is a non-invertible operator.  
We shall see that 
$\cG_0=\{2\,e_1\coma e_2\}\in\mathbb T_2(\ca)$ is a global minimizer of $\tnsa$, 
for every u.i.n. $N$.
Indeed, 
$$
\la (S-S_{\cG_0}) = \la(-e_1\otimes e_1-e_2\otimes e_2) =(-1\coma -1) \implies s(S-S_{\cG_0})= |\la(S-S_{\cG_0})|=(1,1) 
$$ 
and, if $\cG\in \mathbb T_2(\ca)$ 
is arbitrary, then $\tr\, \la (S-S_{\cG}) =1-3=-2$, so that
 $\tr \, s(S-S_{\cG})\geq 2$. This last fact implies that 
$s(S-S_{\cG_0})\prec_ws(S-S_{\cG})$ and therefore $\tnsa(\cG_0)\leq \tnsa(\cG)$. 
Also this problem is co-feasible, with $p=1$, $s_1 =\rk\, S_{\cG_0}= 2$ 
and $c_1  =- 1$. 
Notice that in this case $\cG_0$ is a frame 
for $\C^2$ (even when $S\in\mathcal M_2(\C)^+$ is not an invertible operator).\EOE
\end{exa}
\subsection{The general case}\label{sec kmenord}

So far, we have considered the case of local minimizers of GFOD functions when the number of vectors $k$ is greater than or equal to
the dimension of the space $d$. This was essentially needed in Section \ref{sec cofea}.
In this section we add the case when $k<d$, thus covering all possible cases. Our approach is based on a reduction to the case considered in Section \ref{sec 4.1.}.

\begin{fed}\label{defi delta op kmenord} \rm
Let $S\in\matpos$ and let $\ca=(a_i)_{i\in\I_k}\in(\R_{>0}^k)\da$ with $k<d$.
Let $\cB=\{v_i\}_{i\in\I_d}$ be an ONB of $\C^d$ such that  
$S=\sum_{i\in\I_d}\la_i(S)\ v_i\otimes v_i\, $. Let 
$$
V_k=\text{span}\{v_i : i \in \I_k\}
\peso{and} S_k \igdef S|_{V_k}
=\sum_{i\in\I_k}\la_i(S)\ v_i\otimes v_i \in L(V_k)^+\ .
$$ 
Since $k = \dim V_k$ (the ``new $d$") we can take 
$\delta(\la(S_k)\coma \ca)\in\R^k$ using Definition \ref{def espec posta}, 
for the data $\la(S_k) = (\la_1(S)\coma \dots \coma \la_k(S)\, ) \in (\R_{\ge0}^k)\da$ 
and $\ca\in(\R_{>0}^k)\da$. 
We define the vector 
$$
\delta(\la(S)\coma\ca) \igdef \big(\, \delta(\la(S_k)\coma \ca)\coma 
 \la_{k+1} (S)\coma 
\dots \coma \la_d(S) 
\, \big)\ , 
$$
which does not really depends on $S_k$ and $\cB$, but only on $\la (S) $ and $\ca$. 
\EOE
\end{fed}

\begin{teo}\label{si kmenord}
Let $S\in\matpos$, let $\ca=(a_i)_{i\in\I_k}\in(\R_{>0}^k)\da$ 
and let $N$ be a strictly convex u.i.n. in $\mat$.
 Given $\cG_0=\{g_i\}_{i\in\I_k}\in\tcal$ the following are equivalent:
\ben
\item $\cG_0$ is a global minimizer of $\Theta_{(N\coma S\coma \ca)}$;
\item $\cG_0$ is a local minimizer of minimizer of $\Theta_{(N\coma S\coma \ca)}$;
\item $\la(S-S_{\cG_0})=\delta(\la(S)\coma\ca)\da$ 
(see Definition \ref{def espec posta} if $k\ge d$, and 
Definition \ref{defi delta op kmenord} if $k<d$). 
\een
\end{teo}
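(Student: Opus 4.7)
My plan is to prove $(1)\Rightarrow(2)$ trivially, reduce $(3)\Rightarrow(1)$ to $(2)\Rightarrow(3)$ by compactness, and establish $(2)\Rightarrow(3)$ by reducing to the $k\ge d$ case via restriction to the top-$k$ eigenspace of $S$. The case $k\ge d$ is precisely Theorem \ref{teo main1}, so I assume $k<d$. For $(3)\Rightarrow(1)$, the compact torus $\tcal$ admits a global minimizer $\cG^*$ of the continuous map $\Theta_{(N\coma S\coma \ca)}$; granting $(2)\Rightarrow(3)$ for this $\cG^*$, both $S-S_{\cG_0}$ and $S-S_{\cG^*}$ have the same eigenvalue vector $\delta(\la(S),\ca)\da$ and hence the same singular values, so $N(S-S_{\cG_0})=N(S-S_{\cG^*})$ and $\cG_0$ is globally optimal. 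Thus the core of the proof is $(2)\Rightarrow(3)$.

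For $(2)\Rightarrow(3)$, let $\cG_0=\{g_i\}_{i\in\I_k}$ be a local minimizer and $S_0=S_{\cG_0}$. I first apply Theorem \ref{teo applic1} and Proposition \ref{nueva pro 3.10 y 3.11} to obtain a common ONB $\{v_i\}_{i\in\I_d}$ of $\C^d$ diagonalizing $S$ and $S_0$ with sorted spectra, indexes $0=s_0<s_1<\cdots<s_p=\rk\,S_0\leq k$, and constants $c_1<\cdots<c_p$. Since $\rk\,S_0\leq k<d$, this gives $R(S_0)\inc V_k:=\gen\{v_1,\ldots,v_k\}$ and every $g_i$ lies in $V_k$. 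Moreover $V_k$ reduces both $S$ and $S_0$, yielding the block decomposition
\[
S-S_0\ =\ (S_k-S_0|_{V_k})\ \oplus\ S|_{V_k\orto},
\]
with $\sigma(S|_{V_k\orto})=\{\la_{k+1}(S),\ldots,\la_d(S)\}$. Theorem \ref{teo estruc min loc detallada}, which does not assume $k\ge d$, then recovers $s_1,\ldots,s_{p-1}$ and $c_1,\ldots,c_{p-1}$ from $s_{p-1}$ by precisely the recursion of Definition \ref{fed el espectro posta abstracto} applied to the restricted data $(\la(S_k),\ca)$.

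The crux is to identify $s_{p-1}$ as the unique co-feasible admissible index for the restricted problem $(\la(S_k),\ca)$ (for which ``new $d$''$=k$, so the framework of Section \ref{sec cofea} applies). By Proposition \ref{nueva pro 3.10 y 3.11}, $J_p=\{s_{p-1}+1,\ldots,k\}$, so $\{g_\ell\}_{\ell\in J_p}$ has $k-s_{p-1}$ vectors inside $W_p\inc V_k$ and frame operator $S_0|_{W_p}$ with eigenvalues $(\la_i(S)-c_p)_{i=s_{p-1}+1}^{s_p}$, all positive. Schur--Horn (Theorem \ref{teo SH para marcos}) gives $(a_i)_{i=s_{p-1}+1}^k\prec(\la_{s_{p-1}+1}(S)-c_p,\ldots,\la_{s_p}(S)-c_p)$. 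To upgrade this to the co-feasibility inequality $(a_i)_{i=s_{p-1}+1}^k\prec((\la_i(S)-c_p)^+)_{i=s_{p-1}+1}^k$ I need $\la_i(S)\leq c_p$ for $s_p<i\leq k$; this is the main obstacle, and I resolve it by invoking Theorem \ref{teo applic1}(5): if $s_p<k$ and $\la_{s_p+1}(S)>c_p$, then $c_p<\max\sigma(S-S_0)$, forcing $\{g_\ell\}_{\ell\in J_p}$ to be linearly independent, in contradiction with $|J_p|=k-s_{p-1}>s_p-s_{p-1}=\dim W_p$. Admissibility is automatic (either $s_{p-1}=0$, or $c_{p-1}<c_p$ by construction).

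With $s_{p-1}$ so identified, Theorem \ref{teo paren las rotativas} applied to the restricted problem yields $\delta(\la(S_k),\ca)=\delta(\la(S_k),\ca,s_{p-1})$, whose coordinates are the constants $c_j$ (with multiplicities $s_j-s_{j-1}$) followed by the tail $(\min\{\la_i(S),c_p\})_{i=s_{p-1}+1}^k$. Appending $(\la_{k+1}(S),\ldots,\la_d(S))$ per Definition \ref{defi delta op kmenord}, and splitting $\min\{\la_i(S),c_p\}$ into $c_p$ (for $s_{p-1}<i\leq s_p$) and $\la_i(S)$ (for $s_p<i\leq k$), I see that $\delta(\la(S),\ca)$ and $\delta(S,\ca,\cG_0)$ from Eq.~\eqref{eq el la de F} coincide as multisets. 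Therefore $\la(S-S_{\cG_0})=\delta(\la(S),\ca)\da$, which proves $(2)\Rightarrow(3)$.
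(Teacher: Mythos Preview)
Your proof is correct and follows essentially the same route as the paper's: reduce to $k\ge d$ by restricting to $V_k$, show that $s_{p-1}$ is a co-feasible and admissible index for the truncated data $(\la(S_k),\ca)$ via the dichotomy furnished by Theorem~\ref{teo applic1}(5), and then invoke the uniqueness in Theorem~\ref{teo paren las rotativas}. The only cosmetic difference is that you verify co-feasibility through the majorization condition of Definition~\ref{caso cofea} (via Schur--Horn on $\cG_p$), whereas the paper uses the equivalent operator-theoretic characterization of Remark~\ref{R3.15}; the key contradiction $|J_p|>\dim W_p$ when $c_p<\max\sigma(S-S_0)$ is the same in both.
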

\begin{proof}
If $k\ge d$ this is Theorem \ref{teo main1}. Let us assume that $k<d$. 

\pausa
Clearly $1.\Rightarrow 2$. If we assume $2$ we can apply Theorem \ref{teo applic1}, Proposition \ref{nueva pro 3.10 y 3.11}
and Theorem \ref{teo estruc min loc detallada} 
(these statements do not assume that $k\ge d$).   
With the notation of these results (i.e., with Notation \ref {nuevas notas 2}), 
there exists $\mathcal B=\{v_i\}_{i\in\I_d}$ an ONB of $\C^d$ such that 
$$
S=\sum_{i\in\I_d} \la_i(S) \ v_i\otimes v_i
\py  S_0=S_{\cG_0} = \sum_{i\in\I_d} \la_i(S_0) \ v_i\otimes v_i  
\,.
$$ 
We have that 
$r\igdef  \rk\, S_0\le k$, and $W=R(S_{\cG_0}) = 
\text{span}\{v_i\}_{i\in\I_r} \inc \text{span}\{v_i\}_{i\in\I_k}= V_k\,$,
as in Definition \ref{defi delta op kmenord}. 
Since $\la_i (S_0) = 0 $ for $i > k$, the vector  
$\delta\igdef \big(\,\la_i(S)-\la_i(S_0)\,\big)_{i\in\I_k}\in \R^k$ 
satisfies that 
\beq \label{trunc}
\la(S-S_{0})= \big( \,\la(S)-\la(S_{0})\, \big)\da \py \la(S)-\la(S_{0})  
=\big( \,\delta \coma \la_{k+1} (S)\coma \dots \coma \la_d(S)\, \big)\ .
\eeq
With the notation of Definition \ref{defi delta op kmenord}, we have to prove that 
$\delta = \delta(\la(S_k)\coma \ca)$. Since $r=s_p\le k <d$, we can apply 
 Remark \ref{el delta} (to $\la(S)-\la(S_{0})  \in \R^d$), so that 
\beq\label{cas1}
\delta = 
\big(\, c_1 \, \uno_{s_1} \coma c_2 \, \uno_{s_2-s_{1}} \coma 
\dots \coma c_p\, \uno_{s_p-s_{p-1}} \coma \la_{s_p+1} (S)\coma 
\dots \coma \la_k(S)\,\big) \peso{if} s_p<k\,
\eeq
or 
\beq\label{cas2}
\delta  = \big(\, c_1 \, \uno_{s_1} \coma c_2 \, \uno_{s_2-s_{1}} \coma 
\dots \coma c_p\, \uno_{s_p-s_{p-1}} \,\big)\peso{if} s_p=k\ ,
\eeq
where the indexes $s_1<\ldots<s_{p-2}$ and constants $c_1<\ldots c_{p-1}$ 
are constructed (for $\la(S-S_{\cG_0})$ and therefore also for $\delta$) 
in terms of the index $s_{p-1}\,$ (when $p>1$) using the algorithm given in   
Theorem \ref{teo estruc min loc detallada} (and also 
in Definition \ref{fed el espectro posta abstracto}, with respect to 
$\la(S_k)$, $\ca$ and $s_{p-1}$).
Also $c_{p-1}<c_p$ 
by Theorem \ref{teo applic1}.   

\pausa
Therefore, in order to show that $\delta = \delta(\la(S_k)\coma \ca)$, 
by Theorem \ref{teo paren las rotativas}  we just need to prove that
the index $s_{p-1}\in\I_{k-1}\cup\{0\}$ is co-feasible (and admissible) with respect to $S_k$ and $\ca$. 
By  Theorems \ref{teo applic1} and \ref{nueva pro 3.10 y 3.11} we know that
$(S-S_0)g_i = c_p \, g_i \iff s_{p-1}+1\leq i\leq k$, and 
$$
W_p =\gen\{g_i : \, s_{p-1}+1\leq i\leq k\ \} = \gen\{v_i : \, s_{p-1}+1\leq i\leq s_p\ \} \ .
 $$
Hence, if we let $X=
\gen\{v_i:\, s_{p-1}+1\leq i\leq k\ \}$ and 
$\cG_p=\{g_i\}_{i=s_{p-1}+1}^k\in\mathbb T_{X}(\ca^{(s_{p-1})})$
 then $$(S_k|_X-S_{\cG_p}) \, g_i= (S-S_0)\, g_i = c_p\, g_i\ , \peso{for} s_{p-1}+1\leq i\leq k\ .$$
By Remark \ref{R3.15} (for $S_k$ and $\ca$), 
we only need to show that $c_p =\max\limits_{s_{p-1}+1\leq i \leq k} \ \delta_i \ (=  \max\limits_{i \in \I_k} \ \delta_i \ )\,$. 

\pausa
Suppose that $c_p < \max \sigma (S-S_0)$. Then, by item 5 of Theorem \ref{teo applic1}, 
the set $\cG_0$ is linearly independent 
(since each set $\{g_j\}_{j\in J_j}$ is linearly independent, and 
they are  sets of  eigenvectors of the different eigenvalues $c_j\,$). Then
$s_p = \rk \, S_0 = k$, so we can apply Eq. \eqref{cas2}, and 
automatically $c_p = \max\limits_{i \in \I_k} \ \delta_i \,$. 

\pausa
Otherwise we have that  $c_p = \max \sigma (S-S_0) \ge \max\limits_{i \in \I_k} \ \delta_i \,$.  
Then, in any case $c_p = \max\limits_{i \in \I_k} \ \delta_i \,$. We have proved that 
the index $s_{p-1}$ is co-feasible (and also admissible, because $c_{p-1}<c_p$) 
with respect to $S_k$ and $\ca$. 
Then $\delta = \delta(\la(S_k)\coma \ca)$ by Theorem \ref{teo paren las rotativas} and 
$\la(S-S_{\cG_0})=\delta(\la(S)\coma\ca)\da$ by Eq. \eqref{trunc}.

\pausa
$3. \Rightarrow 1$. An argument analogous to that in the proof of Theorem \ref{teo main1} ($3. \Rightarrow 1.$) proves this implication.
\end{proof}

\begin{rem} \rm
The proof of $2. \Rightarrow 3.$ of Theorem \ref{si kmenord} becomes trivial if we assume 
that (the vectorial version of) the norm $N$ satisfies that, for $x\coma y\in \R^k$ and $z\in \R^{d-k}$, 
\beq\label{nuimala}
N(x\coma z) \le N(y\coma z) \implies N(x\coma 0) \le N(y\coma 0) \ ,
\eeq
since in this case $\cG_0$ is still a local minimizer for $S_k$ and $\ca$ in $V_k\,$.
The most usual strictly convex norms (for example $p$-norms, for $p\in (1,\infty)$) satisfy 
Eq. \eqref{nuimala}, but this property fails in general.  Take $N = \|\cdot \|_\infty + \|\cdot \|_2\,$
which is a strictly convex UIN. In this case, if $r = \frac{\sqrt{2}}{2}\ $, then ($d=3,\, k=2$)
\beq
N\big(\,(0\coma 1)\coma 1\big)=1+\sqrt {2}\  
=	 N\big(\,(r\coma r)\coma 1 \big)
\ \ \text{but} \ \ 
  N\big(\,(0\coma 1)\coma 0\big) =2 >r+1 =N\big(\,(r\coma r) \coma 0 \big) \ .
\EOEP
\eeq
\end{rem}

\begin{cor}\label{cor es min mayo} 
With the notation of Theorem 
\ref{si kmenord}, we have that 
$$
|\delta(\la\coma \ca)|\prec_w |\la(S-S_{\cG})|  \ \ , \peso{for every} \cG\in\tcal  \ .
$$
\end{cor}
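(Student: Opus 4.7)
The plan is to deduce the submajorization directly from Theorem~\ref{si kmenord} via a Ky~Fan dominance argument. First I would fix any global minimizer $\cG_0\in\tcal$ of $\Theta_{(N\coma S\coma \ca)}$ (its existence is guaranteed by the continuity of $\Theta$ on the compact manifold $\tcal$), and note that Theorem~\ref{si kmenord} gives $\la(S-S_{\cG_0})=\delta(\la\coma\ca)\da$. Since $S-S_\cG\in\matsad$, $s(S-S_\cG)=|\la(S-S_\cG)|\da$ for every $\cG\in\tcal$, so the desired inequality $|\delta(\la\coma\ca)|\prec_w|\la(S-S_\cG)|$ is the same as $s(S-S_{\cG_0})\prec_w s(S-S_\cG)$. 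By the Ky Fan dominance principle this reduces to showing $\|S-S_{\cG_0}\|_{(j)}\le\|S-S_\cG\|_{(j)}$ for every $j\in\I_d$, where $\|\cdot\|_{(j)}$ denotes the Ky Fan $j$-norm.

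Fix such a $j$ and $\epsilon>0$, and consider the perturbed norm
\[
N_{j\coma\epsilon}=\|\cdot\|_{(j)}+\epsilon\,\|\cdot\|_2\,.
\]
This is a u.i.n. on $\mat$, and it is strictly convex because its restriction to diagonal matrices is the sum of a convex norm and the (strictly convex) Euclidean norm on $\C^d$. Since the characterization of the global minimizers in Theorem~\ref{si kmenord} is the spectral condition $\la(S-S_{\cG_0})=\delta(\la\coma\ca)\da$, which depends neither on $N$ nor on $N_{j\coma\epsilon}\,$, the same $\cG_0$ is also a global minimizer of $\Theta_{(N_{j\coma\epsilon}\coma S\coma \ca)}\,$; thus
\[
N_{j\coma\epsilon}(S-S_{\cG_0})\le N_{j\coma\epsilon}(S-S_\cG) \peso{for every} \cG\in\tcal\ .
\]
Letting $\epsilon\to 0^+$ yields $\|S-S_{\cG_0}\|_{(j)}\le\|S-S_\cG\|_{(j)}\,$, as required.

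The argument is short once Theorem~\ref{si kmenord} is available; the only point that requires a small remark is that the non--strictly-convex norm $\|\cdot\|_{(j)}$ can be recovered as a limit of strictly convex u.i.n., which is the purpose of the $\epsilon\|\cdot\|_2$ perturbation. This is the ``main obstacle'', but it is truly minor: any strictly convex u.i.n. added to $\|\cdot\|_{(j)}$ works, and the limit $\epsilon\to 0^+$ is immediate because both Ky Fan and Frobenius norms are continuous on $\mat$.
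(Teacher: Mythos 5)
Your argument is correct and coincides with the paper's own proof: both use the perturbed norms $\|\cdot\|_{(j)}+\varepsilon\|\cdot\|_2$, invoke Theorem~\ref{si kmenord} to get the same minimizer for every strictly convex u.i.n., and pass to the limit $\varepsilon\to 0^+$ to recover the Ky Fan inequalities, hence the submajorization. The extra sentence about Ky Fan dominance only makes explicit what the paper leaves implicit; there is no substantive difference.
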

\begin{proof}
For $h\in\I_d$ and $\varepsilon>0$ let 
$$
N_{(h\coma \varepsilon)}(A)= 
N_{(h)}(A) +\varepsilon\, \|A\|_2 = \sum_{i\in\I_h} s_i(A)+\varepsilon\, \|A\|_2 \ , \peso{for} A\in\mat \ .
$$
Then, $N_{(h\coma \varepsilon)}$ is a strictly convex u.i.n. in $\mat$ such 
that $\lim\limits_{\varepsilon\rightarrow 0^+}N_{(h\coma \varepsilon)}(A)=
N_{(h)}(A)$, 
 for $A\in\mat$.
If we let $\cG_0\in\tcal$ be such that $\la(S-S_{\cG_0})=\delta(\la\coma\ca)\da$ then, by Theorem \ref{si kmenord},   
$$
\barr{rl}
\sum_{i\in\I_h}|\delta(\la\coma \ca)|\da_i & 
= \displaystyle N_{(h)}(S-S_{\cG_0})
=\lim_{\varepsilon\rightarrow 0^+}N_{(h\coma \varepsilon)}(S-S_{\cG_0}) \\&\\
& \leq \displaystyle 
\lim_{\varepsilon\rightarrow 0^+}N_{(h\coma \varepsilon)}(S-S_{\cG})
=\sum_{i\in\I_h}|\la(S-S_\cG)|_i\da
\ .
\earr
$$
Since this occurs for every $h\in\I_d\,$, then $|\delta(\la\coma \ca)|\prec_w |\la(S-S_{\cG})|$. 
\end{proof}

\section{Proof of some technical results}\label{Appendixity}

In this section we prove some results stated in Section \ref{sec inner struc}.
We begin by re-stating Notation \ref{nuevas notas 2}, 
 that we will use again
throughout this section.

\pausa
{\bf Notation \ref{nuevas notas 2} (repeated).}  
Fix $S\in\matpos$,  $\asubi$, 
and a strictly convex u.i.n. $N$ on $\mat$. Also consider 
the notions introduced in Theorem \ref{teo applic1}. As before, let 
\begin{enumerate}
\item $\tnsa=\Theta: \tcal \rightarrow \R_{\geq0}$ given by $\Theta(\cG)=N(S-S_{\cG})$.
\item A local minimizer $\cG_0=\llav{g_i}_{i\in\I_k}\in\tcal$ of $\tnsa$, with frame operator 
$S_0=S_{\cG_0}$.
\item We denote by  $\la=(\la_i)_{i\in\I_d} = \la(S) \in (\R_{\geq 0}^d)\da$  and 
$\mu=(\mu_i)_{i\in\I_d} = \la(S_0) \in (\R_{\geq 0}^d)\da$.

\item We fix $\cB=\{v_i\}_{i\in\I_d}$ an ONB of $\C^d$ as in Theorem \ref{teo applic1}. Hence, 
$$
S=\sum_{i\in\I_d} \la_i \ v_i\otimes v_i
\py S_{0}=\sum_{i\in\I_d} \mu_i \ v_i\otimes v_i  \ ,
$$

\item We consider
$W=R(S_0)$, $D=(S-S_0)|_W$ and 
$\sigma(D)=\{c_1\coma\ldots\coma c_p\}$ where $c_1<c_2<\ldots<c_p$. 

\item Let $ s_D= \max \, \{i\in\I_d: \mu_i \neq 0\} = \rk \, S_{0}\,$.

\item We denote by $  \delta= \la - \mu \in \R^d$ 
so that 
$$
S-S_0=\sum_{i\in\I_d} \delta_i \ v_i\otimes v_i \peso{and}
D=\sum_{i=1}^{s_D} \delta_i \ v_i\otimes v_i\ .
$$
Notice that $\delta$ is constructed by pairing the entries 
of ordered vectors (since $\la=\la(S)$ and $\mu=\la(S_0)$\,
Nevertheless, we have that $\la(S-S_0) = \delta\da$. 
In what follows we obtain some properties of (the unordered vector) $\delta$. 

\item For each $j\in \I_p\,$, we consider the following sets of indexes: 
$$K_j = \{ i \in \I_{s_D} :   \delta_i=\la_i -\mu_i = c_j\}  
\py J_j = \{i\in \I_k: D\,g_i = c_j \, g_i\}  \ .
$$ 
Theorem \ref{teo applic1} assures that   
$\I_{s_D}  
= \stackrel{\text{D \ \ \ \ \ }}{\bigcup_{j\in\I_p}} \ K_j \py \I_k  
= \stackrel{\text{D \ \ \ \ \ }}{\bigcup_{j\in\I_p}} \ J_j$ (disjoint unions).
\item By Eq. \eqref{el SF}, $R(S_{0})= \gen\{g_i : i \in \I_k\} = W = 
\bigoplus_{i\in\I_p} \ker \,(D-c_i\,I_W\,)$
then, for every $ j\in \I_p\ $, 
\beq\label{cajas2}
W_j =\gen\{g_i : i \in J_j\} = \ker \,(D-c_j\,I_W\,) = \gen\{v_i : i \in K_j\} \ ,
\eeq
because $g_i \in  \ker \,(D-c_j\,I_W\,)$ for every $i \in J_j\,$. 
Note that, by Theorem \ref{teo applic1}, each $W_j$ reduces both $S$ and $S_{0}\,$. \EOE 
\end{enumerate}

\pausa
In order to prove Proposition \ref{nueva pro 3.10 y 3.11} we first present the following two results.

\begin{pro}\label{pro apend 1} Let $S\in\matpos$ and let $\cG_0\in \tcal$ be as in Notation \ref{nuevas notas 2} and assume that $p>1$. 
Assume that there exist 
\beq\label{eq nueva pro a1}
i< r\leq p \ \ ,  \ \ h\in J_i \ \ ,  \ \  l\in J_r \peso{with} l<h \quad  (\then a_l\geq a_h)\,.
\eeq
 Then, there exists a
continuous curve $\cG(t):[0,1)\rightarrow \tcal$ such that $\cG(0)=\cG_0$ and 
$\la(S-S_{\cG(t)})\prec \la(S-S_0)$ with strict majorization for $t\in (0,\varepsilon)$ for some $\varepsilon >0$.
\end{pro}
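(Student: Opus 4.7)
The plan is to construct the curve by perturbing only the two vectors $g_l$ and $g_h$ inside the two-dimensional subspace $V = \gen\{u_l,u_h\}$, where $u_l = g_l/\sqrt{a_l}$ and $u_h = g_h/\sqrt{a_h}$, and keeping every other frame vector fixed. Since $g_l\in W_r$ and $g_h\in W_i$ with $i\neq r$, the orthogonality of the decomposition $W=\oplus_{j\in\I_p} W_j$ forces $u_l\perp u_h$. Moreover, for each $m\in\I_k\setminus\{l,h\}$ the vector $g_m$ lies in some $W_j$ with $j\notin\{i,r\}$ and is therefore orthogonal to $V$. Consequently $S_{\cG(t)}-S_0$ is supported on $V$, and $V$ reduces $S-S_0$ with $(S-S_0)|_V = c_r\,u_l u_l^* + c_i\,u_h u_h^*$.

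For parameters $a,b\in\R$ to be chosen later, I define
\begin{equation*}
g_l(t)=\sqrt{a_l}\,\big(\cos(at)\,u_l+\sin(at)\,u_h\big), \qquad g_h(t)=\sqrt{a_h}\,\big(\sin(bt)\,u_l+\cos(bt)\,u_h\big).
\end{equation*}
Since $\|g_l(t)\|^2=a_l$ and $\|g_h(t)\|^2=a_h$, together with the unchanged vectors this gives a smooth curve $\cG(t)\in\tcal$ with $\cG(0)=\cG_0$. A direct expansion in the basis $\{u_l,u_h\}$ yields
\begin{equation*}
(S - S_{\cG(t)})|_V = \begin{pmatrix} c_r + \gamma(t) & -\eta(t) \\ -\eta(t) & c_i - \gamma(t) \end{pmatrix},
\end{equation*}
with $\gamma(t)=a_l\sin^2(at)-a_h\sin^2(bt)$ and $\eta(t)=\tfrac{1}{2}\big(a_l\sin(2at)+a_h\sin(2bt)\big)$. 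The trace $c_r+c_i$ is preserved, and the spread $\sqrt{(c_r-c_i+2\gamma)^2+4\eta^2}$ of the two eigenvalues is strictly less than $c_r-c_i$ exactly when $\gamma(c_r-c_i)+\gamma^2+\eta^2<0$. Expanding this near $t=0$ gives at leading order $t^2\,Q(a,b)<0$, where
\begin{equation*}
Q(a,b)=a_l(c_r-c_i+a_l)\,a^2+2a_la_h\,ab+a_h(a_h-c_r+c_i)\,b^2.
\end{equation*}

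The main step is the algebraic check that the $2\times 2$ symmetric matrix associated with $Q$ has determinant $a_la_h(c_r-c_i)\big(a_h-a_l-(c_r-c_i)\big)$, which is strictly negative under the hypotheses $a_l\geq a_h$ and $c_r>c_i$ (both summands in the last factor are nonpositive and one is strictly negative). Hence $Q$ is indefinite, and we may pick $(a,b)\in\R^2$ with $Q(a,b)<0$. With this choice, for $t\in(0,\eps)$ sufficiently small, the eigenvalues of $(S-S_{\cG(t)})|_V$ are of the form $(c_r-\kappa(t),c_i+\kappa(t))$ with $\kappa(t)>0$ of order $t^2$, while the eigenvalues of $S-S_{\cG(t)}$ on $V^\perp$ coincide with those of $S-S_0$. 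Contracting the pair $(c_r,c_i)$ to $(c_r-\kappa,c_i+\kappa)$ while fixing the rest of the spectrum is a classical $T$-transform; a direct partial-sum check (the sorting order is preserved for small $\kappa$) gives $\la(S-S_{\cG(t)})\prec\la(S-S_0)$ with at least one strict inequality. The principal obstacle is the indefiniteness of $Q$; the rest is orthogonality bookkeeping plus a routine partial-sum argument once the hypotheses $l<h\Rightarrow a_l\geq a_h$ and $i<r\Rightarrow c_i<c_r$ are used.
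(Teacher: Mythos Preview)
Your approach is essentially the same as the paper's: both restrict to the two-dimensional span of $g_l/\|g_l\|$ and $g_h/\|g_h\|$, rotate the two vectors inside this plane, and reduce the question to showing that a certain quadratic in the rotation parameters is indefinite. Your determinant computation $a_la_h(c_r-c_i)\big(a_h-a_l-(c_r-c_i)\big)<0$ is exactly the paper's discriminant check, written in the equivalent language of the associated symmetric matrix rather than the quadratic's discriminant in $\gamma$.

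One small slip: the sentence ``for each $m\in\I_k\setminus\{l,h\}$ the vector $g_m$ lies in some $W_j$ with $j\notin\{i,r\}$'' is false in general, since $J_i$ and $J_r$ may contain indices other than $h$ and $l$. This does not damage the argument, because the conclusion you actually need---that $S_{\cG(t)}-S_0$ is supported on $V$---follows simply from the fact that only $g_l$ and $g_h$ are perturbed and both the original and perturbed vectors lie in $V$. Likewise $V$ reduces $S-S_0$ because $u_l,u_h$ are genuine eigenvectors of $S-S_0$ (with eigenvalues $c_r,c_i$), independently of what the other $g_m$ do. Just delete the incorrect sentence and replace it with this direct justification.
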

\begin{proof}
Consider 
$$w_h=g_h/\norm{g_h}= a_h^{-1/2} g_h \py w_l=g_l/\norm{g_l}= a_l^{-1/2} g_l,$$ 
(note that $\pint{w_h,w_l}=0$ because
$\pint{g_h,g_l}=0$). Now define, for $t\in \R$ and for some convenient $\gamma\in \R\setminus\{0\}$  (which will be explicitly calculated later),
$$
g_h(t)= \cos(t)\, g_h + \sin(t) \,\norm{g_h}\, w_l
\py
g_l(t)= \cos(\gamma t)\, g_l + \sin(\gamma t) \,\norm{g_l} \,w_h.
$$ 
Then consider the family $\cG_{\gamma}(t)$, which is obtained from $\cG_0$ by 
replacing the vectors $g_h$ and $g_l$ by $g_h(t)$ and $g_l(t)$ respectively, and denote by $S_{\gamma}(t)$ its frame operator. 
Note that $\cG_\gamma(t)\in\tcal$ for every $t\in\R$ and $\cG_\gamma(0)=\cG_0$.

\pausa
Let $W_{h,l}=\gen\llav{w_h,w_l}$, this subspace reduce 
both $S-S_0$ and $S-S_{\gamma}(t)$. The fact that $g_h(t), g_l(t)\in W_{h,l}$, allows us to 
represent the following matrix with respect to the basis $\llav{w_h,w_l}$ of $W_{h,l}$,
$$
g_h\otimes g_h=
\begin{pmatrix}
a_h & 0\\
0 & 0
\end{pmatrix},\,\,\,
g_h(t)\otimes g_h(t)= a_h
\begin{pmatrix}
\cos^2(t) & \cos(t)\sin(t)\\
\cos(t)\sin(t) & \sin^2(t)
\end{pmatrix},
$$
$$
g_l\otimes g_l=
\begin{pmatrix}
0 & 0\\
0 & a_l 
\end{pmatrix},\,\,\,
g_l(t)\otimes g_l(t)= a_l
\begin{pmatrix}
\sin^2(\gamma t) & \cos(\gamma t)\sin(\gamma t)\\
\cos(\gamma t)\sin(\gamma t) & \cos^2(\gamma t)
\end{pmatrix}.
$$
Then,
$$
S-S_{\gamma}(t) = S-S_0-g_h(t)\otimes g_h(t) -g_l(t)\otimes g_l(t) + g_h\otimes g_h + g_l\otimes g_l.
$$
Hence $(S-S_{0})|_{W_{h,l}^{\perp}}=(S-S_{\gamma}(t))|_{W_{h,l}^\perp}$. On the other hand
$
(S-S_{0})|_{W_{h,l}}=
\begin{pmatrix}
c_i & 0\\
0 & c_r
\end{pmatrix}
$
and
$$
(S-S_{\gamma}(t))|_{W_{h,l}}=
\begin{pmatrix}
c_i + a_h \sin^2(t) - a_l \sin^2(\gamma t) & - a_h\cos( t)\sin( t)-a_l\cos(\gamma t)\sin(\gamma t)\\
- a_h\cos( t)\sin( t)-a_l\cos(\gamma t)\sin(\gamma t)& c_r + a_l \sin^2(\gamma t) - a_h \sin^2(t)
\end{pmatrix}
\stackrel{def}{=} A_\gamma (t).
$$
Since $\tr(A_\gamma (t))=c_i+c_r$ for every $t\in\R$, then we have the strict majorization
$\la(A_\gamma (t))\prec (c_r,c_i)$ if and only if $\norm{A_\gamma (t)}^2_2<c_r^2+c_i^2$.
So consider the function $m_\gamma:\R\to\R$ given by 
$$m_\gamma(t)=\norm{A_\gamma (t)}^2_2=\tr(A_\gamma (t)^2)\,\,\,(\forall t\in\R).$$
Notice that $A_\gamma(0)= (S-S_0)|_{W_{h,l}}$, then $m_\gamma(0)=\tr((S-S_0)|_{W_{h,l}}^2)=c_r^2+c_i^2$. 
The next step is to find a convenient $\gamma\in \R\setminus \{0\}$ such that $m_\gamma^\prime(0)=0$ but $m_\gamma^{\prime\prime}(0)<0$; 
in this case we obtain 
the strict 
majorization $\la(A_\gamma (t))\prec (c_r,c_i)$ for $t\in(0,\varepsilon)$, for some $\varepsilon>0$.
This last fact implies that 
$\la(S-S_\gamma(t))\prec \la(S-S_0)$ strictly, for $t\in(0,\varepsilon)$, as desired.

\pausa
Start computing the derivatives of the entries $a_{ij}(t)$ of $A_\gamma(t)$, for $1\leq i,j\leq 2$:
\begin{align*}
a_{11}^\prime(t)&= a_h\sin(2\,t)-a_l\gamma\sin(2\,\gamma t) &\then &\  a_{11}^\prime(0)=0, \\
a_{12}^\prime(t)&= -a_h\cos(2\,t)-a_l\gamma \cos(2\,\gamma t)=a_{21}^\prime(0)&\then&\ a_{12}^\prime(0)=-a_h-a_l\gamma,\\
a_{22}^\prime(t)&= a_l\gamma\sin(2\,\gamma t)-a_h \sin(2\,t) &\then& \ a_{22}^\prime(0)=0,\\
a_{11}^{\prime\prime}(t)&= 2\, a_h \cos(2\,t)-2 \,a_l\gamma^2 \cos(2\,\gamma t)) &\then& \ a_{11}^{\prime\prime}(0)=2 \,(a_h- a_l\gamma^2), \\
a_{12}^{\prime\prime}(t)&= 2\, a_h \sin(2\,t) + 2\, a_l\,\gamma^2 \sin(2\,\gamma t) &\then&\ a_{12}^{\prime\prime}(0)=0,\\
a_{22}^{\prime\prime}(t)&= 2 \,a_l\gamma^2 \cos(2\,\gamma t)-2 a_h \cos(2\,t) &\then&\ a_{22}^{\prime\prime}(0)=2\,(a_l\gamma^2-a_h).
\end{align*}
Then
\begin{align*}
m_\gamma^\prime (0)&=2 \, a_{11}(0)\, a_{11}^\prime(0)+4\,a_{12}(0)\, a_{12}^\prime(0)+2 \, a_{22}(0)\, a_{22}^\prime(0)=0\, ,\\
m_\gamma^{\prime\prime} (0)&= 2\, a_{11}(0)\, a_{11}''(0)+ 4\, (a_{12}'(0))^2+ 2\, a_{22}(0)\, a_{22}''(0)
\\ &=4\, c_i(a_h-a_l\,\gamma^2)+4 (a_h+a_l\gamma)^2+4\, c_r(a_l\,\gamma^2 -a_h).
\end{align*}
Note that $m_\gamma^{\prime\prime} (0)$ is a quadratic function depending on $\gamma$ whose discriminant is
$$
a_h^2\, a_l^2\corch{a_h\, a_l- (a_l+c_r-c_i)(a_h+c_i-c_r)}>0,
$$
because we assume that $a_h\leq a_l$ (and we have that $c_r>c_i$),
$$
(a_l+(c_r-c_i))(a_h-(c_r-c_i))=  a_l\, a_h+ (c_r-c_i)(a_h-a_l)-(c_r - c_i)^2<a_l\, a_h.
$$
Then, there exists $\gamma\in\R\setminus\{0\}$ such that $m_\gamma^{\prime\prime}(0)<0$. 
\end{proof}

\pausa
The following result together with Proposition \ref{pro apend 1} will allow us to obtain a proof of Proposition \ref{nueva pro 3.10 y 3.11} (see below).

\begin{pro}\label{pro apend 2} Let $S\in\matpos$ and let $\cG_0\in \tcal$ be as in Notation \ref{nuevas notas 2} and assume that $p>1$.
Assume that there exist 
\beq\label{eq no vale estru kj}
i\in K_e \py j\in K_r \peso{with} e< r \peso{such that} j<i\,.\eeq
 In this case, we construct a 
continuous curve $\cG(t):[0,1)\rightarrow \tcal$ such that $\cG(0)=\cG_0$ and such that 
$\la(S-S_{\cG(t)})\prec \la(S-S_0)$ with strict majorization for $t\in (0,\varepsilon)$ for some $\varepsilon >0$.
\end{pro}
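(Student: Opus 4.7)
The plan is to construct an explicit perturbation in direct analogy with Proposition \ref{pro apend 1}, using the eigenvectors $v_i, v_j$ in the roles played by $w_l, w_h$ there. First, since $v_i \in W_e = \gen\{g_m : m \in J_e\}$ and $v_j \in W_r = \gen\{g_m : m \in J_r\}$, I would choose $h \in J_e$ with $\alpha := \langle g_h, v_i\rangle \neq 0$ and $l \in J_r$ with $\beta := \langle g_l, v_j\rangle \neq 0$; absorbing unitary phases into $v_i, v_j$, I may assume $\alpha, \beta > 0$. For $t\in [0,1)$ and a scalar $\gamma\in\R\setminus\{0\}$ to be chosen, define
\[
g_h(t) = \cos(t)\, g_h + \sin(t)\, \sqrt{a_h}\, v_j, \qquad
g_l(\gamma t) = \cos(\gamma t)\, g_l + \sin(\gamma t)\, \sqrt{a_l}\, v_i,
\]
and let $\cG_\gamma(t)$ be the result of replacing $g_h, g_l$ by these curves in $\cG_0$. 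Since $g_h\perp v_j$ and $g_l\perp v_i$ (both because $W_e\perp W_r$), we have $\|g_h(t)\|^2=a_h$ and $\|g_l(\gamma t)\|^2=a_l$, so $\cG_\gamma(t)\in\tcal$ with $\cG_\gamma(0)=\cG_0$.

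The analysis then proceeds by restricting attention to the two-dimensional subspace $U=\gen\{v_i,v_j\}$, which is invariant under $S-S_0$ with $(S-S_0)|_U=\mathrm{diag}(c_e,c_r)$. A direct computation shows that $(S-S_{\cG_\gamma(t)})|_U$ is a $2\times 2$ matrix $A_\gamma(t)$ of the same structural form as the matrix appearing in the proof of Proposition \ref{pro apend 1} (with $c_e,c_r$ in the roles of $c_i,c_r$ there), and with trace $c_e+c_r$ preserved for all $t$. Following that proof, I would set $m_\gamma(t)=\tr(A_\gamma(t)^2)$, check that $m_\gamma(0)=c_e^2+c_r^2$ and $m'_\gamma(0)=0$, and analyze $m''_\gamma(0)$ as a quadratic polynomial in $\gamma$ whose discriminant I would show to be positive. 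This yields a choice of $\gamma$ with $m''_\gamma(0)<0$, so that $\|A_\gamma(t)\|_2^2<c_e^2+c_r^2$ for small $t>0$; combined with trace preservation, this gives the strict majorization $\la(A_\gamma(t))\prec(c_r,c_e)$ on $U$. Since the restriction of $S-S_{\cG_\gamma(t)}$ to the orthogonal complement of the relevant invariant subspace agrees with that of $S-S_0$, the concatenation property of majorization (Remark \ref{desimayo} item 4) yields $\la(S-S_{\cG_\gamma(t)})\prec\la(S-S_0)$ with strict inequality, as required.

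The main obstacle is twofold. First, when $|K_e|>1$ or $|K_r|>1$, the vectors $g_h, g_l$ are not proportional to $v_i, v_j$ respectively, so the perturbation couples $U$ to $U^\perp$ through off-diagonal blocks involving the residuals $g_h'=g_h-\alpha v_i\in W_e$ and $g_l'=g_l-\beta v_j\in W_r$; one must verify that these cross terms either vanish at the needed orders, or can be accommodated by enlarging the invariant block to $W_e+W_r$ (or to $\gen\{v_i,v_j,g_h',g_l'\}$) without spoiling the strict-majorization conclusion. Second, whereas Proposition \ref{pro apend 1} used the frame-level inequality $a_l\ge a_h$ (equivalent to $l<h$) to force the discriminant positive, here only the basis-level hypothesis $j<i$ is available; translating this into a usable inequality governing $|\alpha|^2-|\beta|^2$ will go through the identity $\mu_\ell=\sum_m|\langle g_m,v_\ell\rangle|^2$ together with the monotonicity $\mu_j\ge\mu_i$ that follows from $j<i$ and $\mu\in(\R_{\ge 0}^d)\da$. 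Carrying out this translation is, I expect, the crux of the argument, after which the Frobenius-norm/majorization chain closes in the same style as in Proposition \ref{pro apend 1}.
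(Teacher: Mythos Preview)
Your proposal has a genuine gap that you yourself identify but do not close. When $|K_e|>1$, the vector $g_h\in J_e$ is in general not proportional to $v_i$; writing $g_h=\alpha\,v_i+g_h'$ with $0\neq g_h'\in W_e\cap v_i^\perp$, your perturbation $g_h(t)=\cos(t)\,g_h+\sin(t)\sqrt{a_h}\,v_j$ produces in $S_{\cG_\gamma(t)}-S_0$ first-order cross terms of the form $\cos(t)\sin(t)\sqrt{a_h}\,(g_h'\otimes v_j+v_j\otimes g_h')$, so the plane $U=\gen\{v_i,v_j\}$ is \emph{not} invariant for $S-S_{\cG_\gamma(t)}$ and the compression ``$(S-S_{\cG_\gamma(t)})|_U$'' does not capture the eigenvalues you need. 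Enlarging the block to $W_e+W_r$ destroys the $2\times 2$ structure on which the discriminant argument of Proposition \ref{pro apend 1} rests, and there is no visible mechanism by which the hypothesis $\mu_j\ge\mu_i$ alone would force a favourable sign in that larger block.

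The paper sidesteps both obstacles by a different device: rather than perturbing two individual frame vectors, it applies the single linear map
\[
V(t)=I+\big((1-t^2)^{1/2}-1\big)\,v_i\otimes v_i+t\,v_j\otimes v_i
\]
to \emph{all} frame vectors at once. Since $\langle g_l,v_i\rangle=0$ for $l\notin J_e$, only vectors in $J_e$ move; a direct check gives $\|V(t)g_l\|=\|g_l\|$ for every $l\in\I_k$, so $\cG(t)=V(t)\cdot\cG_0\in\tcal$, and crucially $S_{\cG(t)}=V(t)\,S_0\,V(t)^*$. Because $S_0$ is already diagonal in $\cB$, this congruence keeps the perturbation confined to the $2\times 2$ block on $\gen\{v_i,v_j\}$, with constant trace $\mu_i+\mu_j$ and determinant $(1-t^2)\,\mu_i\mu_j$; hence the larger eigenvalue $L(t)$ strictly increases from $L(0)=\mu_j$ (here $j<i\Rightarrow\mu_j\ge\mu_i$ is exactly what is used). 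A further unitary $U(t)$ re-diagonalizes this block in $\cB$, so $S-S_{\tilde\cG(t)}$ is diagonal in $\cB$ with the entries $c_r,c_e$ replaced by $c_r-E(t)$ and $c_e+E(t)$, $E(t)=L(t)-\mu_j>0$, giving the strict majorization without any discriminant computation.
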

\begin{proof} 
With the notation of the statement and Notation \ref{nuevas notas 2}, notice that
$$
\mu_i\leq \mu_j     \py  {c_e=\la_i-\mu_i < c_r=\la_j-\mu_j} \ . 
$$
As in 
Notation \ref{nuevas notas 2}, consider $\cB=\{v_l\}_{l\in\I_d}$ an ONB of $\C^d$ such that 
\beq
S=\sum_{\ell\in\I_d} \la_\ell \ v_\ell\otimes v_\ell
\py S_{0}=\sum_{\ell\in\I_d} \mu_\ell \ v_\ell\otimes v_\ell  \ .
\eeq 
For $t\in[0,1)$ we let 
\beq\label{defi glt}
 g_l(t)=  g_l + \big(\, (1-t^2)^{1/2}-1\,\big)\ \langle g_l,v_i\rangle \ v_i + t \ \langle g_l,v_i\rangle \ v_j \peso{for} l\in \I_k\,. 
\eeq
Notice that, if $l \in J_e\,$, then  
$(S-S_0) \, g_l = c_e\, g_l \implies \api g_l\coma v_j\cpi =0$. 
Similarly, if $l\in \I_k\setminus J_e$ then $\langle g_l\coma v_i\rangle =0$ (so that  $ g_l(t)=g_l$). 
Therefore the sequence   $\cG(t)=\{g_l(t)\}_{l\in\I_k}\in\tcal $ for $t\in[0,1)$. 
Let $P_i=v_i\otimes v_i$ and $P_{ji}=v_j\otimes v_i$ (so that $P_{ji}\,x=\langle x\coma v_i\rangle \ v_j$). Then, 
for every $t \in [0\coma 1)$,  
$$
g_l(t)= \big(\,I+ ((1-t^2)^{1/2}-1)\ P_i + t\ P_{ji}\,\big)\ g_l\peso{for every} l\in \I_k\ . 
$$
That is, if $V(t)=I+ ((1-t^2)^{1/2}-1)\ P_i + t\ P_{ji}\in\mat$ then $g_l(t)=V(t)\ g_l\,$ for every $l\in \I_k$ and $t\in[0,1)$.
Therefore, we get that
$$
\cG(t)=V(t)\,\cG=\{V(t)\, g_l\}_{l\in\I_k} \implies S_{\cG(t)}=V(t)\, S_\cG\, V(t)^*\peso{for} t\in[0,1)\ .
$$
Hence, we obtain the representation 
$$ 
S_{\cG(t)}=\sum_{\ell\in\I_d\setminus\{i,\,j\}}\mu_\ell \ v_\ell\otimes v_\ell 
+ \gamma_{11}(t)\ v_j\otimes v_j+ \gamma_{12}(t)\ v_j\otimes v_i 
+ \gamma_{21}(t)\ v_i\otimes v_j + \gamma_{22}(t)\ v_i\otimes v_i\ ,
$$ 
where the functions $\gamma_{rs}(t)$ are the entries of 
$A(t)=\big(\,\gamma_{rs}(t)\,\big)_{r\coma s=1}^2\in\cH(2)$ 
defined by 
$$
A(t)= \begin{pmatrix} 1 & t \\ 0 & (1-t^2)^{1/2}\end{pmatrix} \begin{pmatrix} \mu_j& 0 \\ 0&\mu_i\end{pmatrix} 
\begin{pmatrix}1& 0\\ t& (1-t^2)^{1/2}\end{pmatrix}  \peso{for every} t\in [0\coma 1)\ .
$$
It is straightforward to check that $\tr(A(t))=\mu_j+\mu_i$ and that $\det(A(t))=(1-t^2)\, \mu_j\,\mu_i\,$. 
These facts imply that 
if we consider the continuous function $L(t)=\la_{\max}(A(t))$ then $L(0)=\mu_j$ and $L(t)$ is strictly 
increasing in $[0,1)$. 
More straightforward computations show that we can consider continuous curves $x_i(t):[0,1)\rightarrow \C^2$ 
which satisfy that  $\{x_1(t),\,x_2(t)\}$ is ONB of $\C^2$ such that
$$A(t)\, x_1(t)=L(t)\, x_1(t) \peso{for} t\in[ 0,1) \py 
x_1(0)=e_1 \ ,\ \ x_2(0)=e_2\,.$$
For $t\in[0,1)$ we let $X(t)=(u_{r,s}(t))_{r,s=1}^2\in \cU(2)$ with columns $x_1(t) $ and $x_2(t)$. 
By construction, $X(t)=[0,1)\rightarrow \cU(2)$ 
is a continuous curve such that $X(0)=I_2\,$ and such that 
$$ X(t)^*\, A(t)\, X(t)=\begin{pmatrix} L(t) & 0 \\ 0 & \mu_i+\mu_j-L(t)\end{pmatrix}\,.$$
Finally, consider the continuous curve $U(t):[0,1)\rightarrow \matud$ given by 
$$
U(t)=  u_{11}(t) \,v_j\otimes v_j+u_{12}(t)\, v_j\otimes v_i + u_{21}(t)\, v_i\otimes v_j + u_{22}(t)\, v_i\otimes v_i
+\sum_{\ell\in\I_d\setminus\{i,\,j\}} v_l\otimes v_l \ .
$$ 
Notice that $U(0)=I$;  also, let $\tilde \cG(t)=U(t)^*\, \cG(t)\in\tcal $ for $t\in[0,1)$, which is a 
continuous curve such that $\tilde \cG(0)=\cG_0\,$.
 In this case, for $t\in[0,1)$ we have that 
$$
S_{\tilde \cG(t)}=U(t)^*\, S_{\cG(t)}\, U(t)
= L(t)\, v_j\otimes v_j + (\mu_i+\mu_j-L(t)) \,v_i\otimes v_i
+\sum_{\ell\in\I_d\setminus\{i,\,j\}}\mu_\ell \ v_\ell\otimes v_\ell \ .
$$ 
In other words, $U(t)$ is constructed in such a way that $\cB=\{v_l\}_{i\in\I_d}$ consists of eigenvectors of $S_{\tilde \cG(t)}$ for 
every $t\in[0,1)$. Hence, if 
$E(t) = L(t)-\mu_j\geq 0$ for $t\in [0\coma 1)$, we get that 
$$ 
S-S_{\tilde \cG(t)}= (c_r-E(t)\,)\, v_j\otimes v_j + (c_e+ E(t)\,)\, v_i\otimes v_i
+\sum_{\ell\in\I_d\setminus\{i,\,j\}}(\la_\ell - \mu_\ell) \ v_\ell\otimes v_\ell \ .
$$ 
Let $\varepsilon>0$ be such that 
$E(t)=L(t)-\mu_j \leq \frac{c_r-c_e}{2}$ for $t\in[0,\varepsilon]$. (recall that 
$L(0)=\mu_j $ and that $c_e<c_r$). %
Since $L(t)$ (and hence $E(t)$) is strictly 
increasing in $[0,1)$, we see that 
$$
(c_r- E(t)\coma c_e+ E(t))\prec (c_r\coma c_e)\implies \la(S-S_{\tilde \cG(t)})\prec \la(S-S_0) 
\peso{for} t\in(0\coma \varepsilon] \ , 
$$ 
where the majorization relations above are strict. 
\end{proof}

\pausa
\begin{proof}[Proof of Proposition \ref{nueva pro 3.10 y 3.11}]
Fix $S\in\matpos$,  $\asubi$  and a strictly convex u.i.n. $N$ on $\mat$. 
Consider $\cG_0$ a local minimizer of $\tnsa$ in $\tcal$. Then, $\cG_0$ satisfies
the assumptions in Notation \ref{nuevas notas 2}; with this notation, assume that $p>1$.
Then, we show that there exist $0=s_0<s_1<\ldots<s_{p-1}<s_p= \rk\, S_0\leq d$ such that 
\beq\label{los Kj3}
\barr{rl}
K_j &=J_j =\{s_{j-1}+1\coma \ldots \coma  s_j\} \ ,\quad 
\peso{for} j\in\IN{p-1}\  , \\&\\
K_p& =\{s_{p-1}+1 \coma \ldots \coma  s_p\} \ \ , \ \ J_p=\{s_{p-1}+1\coma \ldots\coma  k\} \ .
\earr 
\eeq
Indeed, in case the sets $J_j$ for $j\in\I_p$ do not have the structure described above (i.e. 
increasing sets formed by consecutive indexes) then, we get that there exist indexes $i,\, r\in\I_p$ and $h,\, l\in\I_k$ for which
Eq. \eqref{eq nueva pro a1} holds. In this case, Proposition  \ref{pro apend 1} shows that there exists a continuous curve $\cG(t):[0,1)\rightarrow \tcal$ such that $\cG(0)=\cG_0$ and such that 
$\la(S-S_{\cG(t)})\prec \la(S-S_0)$ with strict majorization for $t\in (0,\varepsilon)$ for some $\varepsilon >0$.
Since $N$ is a strictly convex u.i.n. we conclude that 
\beq \label{eq no es min loc apend}
\tnsa(\tilde \cG(t))=N(S-S_{\tilde \cG(t)})< N(S-S_0)
=\tnsa(\cG_0)\peso{for} t\in(0\coma \varepsilon]\ .
\eeq
This last fact contradicts the local minimality of $\cG_0$. Hence, there exist indexes  
$s_0=0<s_1<\ldots<s_{p-1}<s_p\leq d$ for which the representation of the sets $J_j$ for $j\in\I_p$ as in 
Eq. \eqref{los Kj3} holds. 

\pausa 
Similarly, in case $K_j$ for $j\in\I_p$ are not increasing sets formed by consecutive indexes then, using Proposition 
\ref{pro apend 2}, we also get that $\cG_0$ is not a local minimizer; this last fact contradicts the hypothesis on $\cG_0$.
Finally, notice that by Theorem \ref{teo applic1} we have that
 the family $\{g_i\}_{i\in J_j}$ is linearly independent for every $j\in\I_{p-1}$. 
In particular, by Eq. \eqref{cajas2}, we get that $\dim (W_j)=|K_j| =|J_j|  \peso{for} j\in\I_{p-1}$. Hence, 
we get that $J_j=K_j$ for $j\in\I_{p-1}$ and that $K_p=\{s_{p-1}+1 \coma \ldots \coma  s_p\}$ and the result follows.
\end{proof}

\pausa
In what follows, we show Theorem \ref{teo estruc min loc detallada}.  First, we consider a preliminary result. 

\begin{pro}\label{pro prom para c1}
Consider Notation \ref{nota los his} and \ref{nuevas notas 2}, and assume that $p>1$. Assume further that the sets $J_j$ and $K_j$, for $j\in\I_p$, satisfy Eq.
\eqref{los Kj3} above. Then, 
\ben
\item 
We have that  $(a_i)_{i\in J_j}\prec (\la_i-c_j)_{i\in K_j}$, for $j\in\I_p$.
\item If $0\leq r<s\leq d$ then, $(a_j)_{j=r+1}^s\prec(\la_j-P_{r+1,\, s})_{j=r+1}^s$ if and only if $$ P_{r+1,\,s}\leq P_{r,\, i}\ , \ \ r+1\leq i\leq s \iff P_{r+1,\, s}=\min\{P_{r+1,\, i}:\ r+1\leq i\leq s\}\,.$$
\een
\end{pro}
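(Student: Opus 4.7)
The two parts are essentially independent; the first is a structural fact about the reducing subspaces $W_j$ while the second is a purely arithmetic rewriting of the majorization inequalities.

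For part 1, the strategy is to apply the Schur--Horn theorem for finite frames (Theorem \ref{teo SH para marcos}) on each $W_j$. By Notation \ref{nuevas notas 2} and Proposition \ref{nueva pro 3.10 y 3.11} we have $W_j=\gen\{g_i: i\in J_j\}=\gen\{v_i: i\in K_j\}$, and Theorem \ref{teo applic1} says that $W_j$ reduces both $S$ and $S_0$. Therefore $S_0\big|_{W_j}\in L(W_j)^+$, and because $g_i\in W_j$ for every $i\in J_j$ (and $g_i\perp W_j$ for $i\notin J_j$, by the decomposition \eqref{Wj ort}), we may write
\[
S_0\big|_{W_j}=\sum_{i\in J_j} g_i\otimes g_i\ .
\]
Thus $\{g_i\}_{i\in J_j}$ is a frame-like family in $W_j$ with frame operator $S_0\big|_{W_j}$. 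Its eigenvalues are $(\mu_i)_{i\in K_j}$, and since $i\in K_j$ forces $\delta_i=\lambda_i-\mu_i=c_j$, we get $\lambda(S_0\big|_{W_j})=(\lambda_i-c_j)_{i\in K_j}$. Applying Theorem \ref{teo SH para marcos} to $\{g_i\}_{i\in J_j}\subset W_j$ with norms $\|g_i\|^2=a_i$ yields the desired majorization $(a_i)_{i\in J_j}\prec (\lambda_i-c_j)_{i\in K_j}$. The only mild subtlety is the case $j=p$, in which $|J_p|=k-s_{p-1}$ may differ from $|K_p|=s_p-s_{p-1}$; this is handled because Theorem \ref{teo SH para marcos} allows majorization between vectors of different lengths (the spectrum is implicitly padded with zeros, corresponding here to the kernel of $S_0\big|_{W_p}$).

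For part 2, the trace condition for $\prec$ holds automatically: directly from the definition \eqref{Pjr},
\[
\sum_{j=r+1}^s(\lambda_j-P_{r+1,s})=\sum_{j=r+1}^s\lambda_j-(s-r)\,P_{r+1,s}=\sum_{j=r+1}^s a_j\ .
\]
Since $\ca\in(\R^k_{>0})\da$ and $\lambda(S)\in(\R^d)\da$, both tuples $(a_j)_{j=r+1}^s$ and $(\lambda_j-P_{r+1,s})_{j=r+1}^s$ are already non-increasing, so the partial-sum inequalities of majorization read
\[
\sum_{j=r+1}^i a_j\le \sum_{j=r+1}^i(\lambda_j-P_{r+1,s})=\sum_{j=r+1}^i\lambda_j-(i-r)\,P_{r+1,s}
\quad\text{for every }r+1\le i\le s.
\]
Rearranging and using $\sum_{j=r+1}^i(\lambda_j-a_j)=(i-r)\,P_{r+1,i}$ gives the equivalence
\[
\sum_{j=r+1}^i a_j\le \sum_{j=r+1}^i(\lambda_j-P_{r+1,s}) \iff P_{r+1,s}\le P_{r+1,i}\ ,
\]
which is precisely the stated condition.

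Neither step presents a serious obstacle; the only point requiring care is the length mismatch when $j=p$ in part 1, and confirming that the reducing property of $W_j$ under $S_0$ lets us restrict the frame operator cleanly so that Schur--Horn applies verbatim on $W_j$.
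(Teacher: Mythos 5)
Your proof is correct and takes essentially the same route as the paper: in part~1 you restrict $S_0$ to the reducing subspace $W_j$, identify it as the frame operator of $\{g_i\}_{i\in J_j}$ with spectrum $(\lambda_i-c_j)_{i\in K_j}$, and invoke Theorem~\ref{teo SH para marcos}; in part~2 you verify the trace identity and translate the partial-sum inequalities into $P_{r+1,s}\leq P_{r+1,i}$, which is exactly the paper's computation. The only cosmetic difference is that the paper passes through $S\,Q_j=c_j\,Q_j+S_{\cG_j}$ to read off the spectrum, whereas you read it directly from $\mu_i=\lambda_i-c_j$ for $i\in K_j$.
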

\begin{proof}
For each $j\in\I_p$, consider $W_j=\text{span}\{g_i:\ i\in J_j\}=R(S_{\cG_j})$, so that $\dim W_j=|K_j|$ and let $Q_j$ be the orthogonal projection onto $W_j$;
 then, $W_j$ reduces both $S$, $S_0$ and notice that $(S-S_0)\, Q_j= c_j\, Q_j$ and $S_0\, Q_j=S_{\cG_j}$. Then, 
$$S\, Q_j=(S-S_0)Q_j+ S_0\, Q_j = c_j\, Q_j + S_{\cG_j} \implies \la(S_{\cG_j})=((\la_i-c_j)_{i\in K_j},\,0_{d-|K_j|})\in (\R^d)\da\,. $$
Hence, by the Schur-Horn theorem we get that $(a_i)_{i\in J_j}\prec \la(S_{\cG_j})$ which is equivalent to the 
majorization relation $(a_i)_{i\in\I_j}\prec(\la_i-c_j)_{i\in K_j}$, and item 1 follows. 

\pausa 
Let $0\leq r<s\leq d$ and notice that by construction $(a_j)_{j=r+1}^s,\, (\la_j-P_{r+1\coma s})_{j=r+1}^s\in (\R^{s-r})\da$. 
On the other hand, if $r+1\leq i\leq s$ then
$$ \sum_{j=r+1}^i a_j \leq \sum_{j=r+1}^i \la_j-P_{r+1\coma s} \iff (i-r) P_{r+1\coma s}\leq \sum_{j=r+1}^i h_i \iff P_{r+1\coma s}\leq P_{r+1\coma i}\,. $$
This last fact shows item 2.
\end{proof}

	\begin{proof}[Proof of Theorem \ref{teo estruc min loc detallada}]
In case $\cG_0$ is a local minimizer of $\tnsa$ on $\tcal$ for a strictly convex u.i.n., then
the previous results imply that the sets $J_j$ and $K_j$ associated with $\cG_0$ satisfy Eq.
\eqref{los Kj3}. Hence, we show that the following relations hold:
\ben
\item The index $s_1 = \max \, \big\{j \le s_{p-1} \, :\, 
P_{1\coma j} = \min\limits_{i\le s_{p-1}}  \, P_{1\coma i} \, \big\}$, and 
$c_1 = P_{1\coma s_1}\,$.
\item Recursively, if $s_j<s_{p-1}\,$, then
$$
s_{j+1} = \max \, \big\{s_j< r \le s_{p-1} \, :\, 
P_{s_j+1\coma r} = \min\limits_{s_j< i\le s_{p-1}}  \, P_{s_j+1 \coma i} \, \big\} 
\py c_{j+1} = P_{s_j+1\coma s_{j+1}}\ .
$$
\een
Indeed, consider an arbitrary $0\leq j\leq p-2$. By item 1. in Proposition \ref{pro prom para c1} and the fact that $J_{j+1}=K_{j+1}=\{s_{j}+1,\ldots, s_{j+1}\}$ then we see that 
\beq\label{hay mayo apend}
(a_i)_{i\in J_{j+1}}\prec (\la_i-c_{j+1})_{i\in K_{j+1}} \implies c_{j+1}=P_{s_{j}+1\coma  s_{j+1}}\,.
\eeq
Now, using the majorization relation in Eq. \eqref{hay mayo apend} an item 2 in Proposition \ref{pro prom para c1} we also get that 
$$P_{s_j+1\coma s_{j+1}}=\min\{P_{s_j+1\coma i}: s_j< i\leq s_j \}\,. $$
Therefore, in case the relations between the indexes $s_0=0<\ldots<s_{p-1}$ and the constants $c_1<\ldots<c_{p-1}$ in the statement do not hold, we get that there 
exists $0\leq j\leq p-2$ such that 
$$
s_{j+1} < \max \, \big\{s_j< r \le s_{p-1} \, :\, 
P_{s_j+1\coma r} = \min\limits_{s_j< i\le s_{p-1}}  \, P_{s_j+1 \coma i} \, \big\} =t\leq s_{p-1}\,.
$$ By definition of $t$ we get that 
\beq \label{eq para contra apend}
c_{j+1}=P_{s_j+1 \coma s_{j+1}}\geq  P_{s_j+1 \coma t}\,.
\eeq Also, there exists
$ j+1\leq \ell\leq p-2$ such that $s_{\ell}< t\leq s_{\ell+1}$. Using the majorization relation in Eq. \eqref{hay mayo apend}
we see that for $j\leq r\leq \ell-1$: 
$$(s_{r+1}-s_{r}) \, c_{r+1} = \sum_{i=s_{r}+1}^{s_{r+1}}h_i \py   (t-s_{\ell}) \, c_{\ell+1} \leq  \sum_{i=s_{\ell}+1}^{t} h_i\,.  $$
Then, the previous inequalities allow us to bound
$$
 P_{s_j+1\coma t}=\frac{1}{t-s_j} \ \sum_{i=s_j+1}^t  h_i\geq \sum_{r=j}^{\ell-1}   \frac{s_{r+1}-s_{r}}{t} \  c_{r+1} + \frac{t-s_{\ell}}{t} \  c_{\ell+1} =:\beta
$$ that represents the lower bound $\beta$ as a convex combination of the constants $c_{j+1}<\ldots <c_{\ell+1}$. 
This last fact clearly implies that $P_{s_j+1\coma t}\geq \beta > c_{j+1}$, that contradicts Eq. \eqref{eq para contra apend}.
\end{proof}


\begin{thebibliography}{}

{\scriptsize

\bibitem{Illi} J. Antezana, P. Massey, M. Ruiz and D. Stojanoff, 
The Schur-Horn theorem for operators and frames with prescribed norms and frame operator, 
 Illinois J.  Math., { 51} (2007),  537-560.
 

\bibitem{BF} J.J. Benedetto, M. Fickus,  Finite normalized tight frames, Adv. Comput. Math. 18, No. 2-4 (2003), 357-385 .


\bibitem{Bhat} R. Bhatia,  Matrix Analysis,
Berlin-Heildelberg-New York, Springer 1997.

\bibitem{BoC10} B.G. Bodmann,  P.G: Casazza, The road to equal-norm Parseval frames. J. Funct. Anal. 258 (2010), no. 2, 397-420.

\bibitem{CC13} J. Cahill,  P.G. Casazza, The Paulsen problem in operator theory. Oper. Matrices 7 (2013), no. 1, 117-130. 

\bibitem{CFM12} J. Cahill, M. Fickus, D.G. Mixon, Auto-tuning unit norm frames. Appl. Comput. Harmon. Anal. 32 (2012), no. 1, 1-15.


\bibitem{TaF} P.G. Casazza, The art of frame theory, Taiwanese J. Math. 4 (2000), no. 2, 129-201.



\bibitem{FinFram} P. G. Casazza and G. Kutyniok eds., Finite Frames: Theory and Applications. Birkhauser, 2012. xii + 483 pp.




\bibitem{Chr} O. Christensen, An introduction to frames and Riesz bases. Applied and Numerical Harmonic Analysis. Birkhäuser Boston, 2003. xxii+440 pp.

\bibitem{CMR} G. Corach, P. Massey and M. Ruiz, Procrustes problems and Parseval quasi-dual
frames, Acta Applicandae Mathematicae 131 (2014), 179–195


\bibitem{Eld} L. Eld\'en, H. Park, A Procrustes problem on the Stiefel manifold.Numer. Math. 82,
no. 4, 599-619 (1999)


\bibitem{GB} J. C. Gower,  G. B. Dijksterhuis,  Procrustes problems. Oxford Statistical Science
Series, 30. Oxford University Press, Oxford, (2004)

\bibitem{Hig} N.J. Higham,  Matrix nearness problems and applications. Applications of matrix
theory (Bradford, 1988), 1-27, Inst. Math. Appl. Conf. Ser. New Ser., 22, Oxford
Univ. Press, New York, (1989)

\bibitem{Ka} T. Kato, Perturbation Theory for Linear Operators, Springer-Verlag, Berlin Heidelberg New York 1980.

\bibitem{Ki} U. Kintzel,  Procrustes problems in finite dimensional indefinite scalar product
spaces. Linear Algebra Appl. 402, 1-28 (2005)


\bibitem {LPS}
C.K. Li, Y.T. Poon, T. Schulte-Herbrüggen, 
Least-squares approximation by elements from matrix orbits achieved by gradient flows on compact Lie groups. Math. Comp. 80 (2011), no. 275, 1601-1621. 

\bibitem {dnp}
P. Massey, N. B. Rios, D. Stojanoff; Frame completions with prescribed norms: local minimizers and applications. Adv. in Comput. Math. 44 (2018), no. 1, 54-86.

\bibitem {dnp2}
P. Massey, N. B. Rios, D. Stojanoff; Local Liskii's theorems for unitarily invariant norms.  Linear Algebra Appl. 557 (2018), 34-61. 

\bibitem{MR0}P. Massey, M.A. Ruiz, Tight frame completions with prescribed norms. Sampl. Theory Signal Image Process. 7 (2008), no. 1, 1-13.


\bibitem{mr2010}
P. Massey, M. Ruiz; Minimization of convex functionals over frame operators. Adv. Comput. Math. 32 (2010), no. 2, 131-153.



\bibitem {mrs2}
P.G. Massey, M.A. Ruiz, D. Stojanoff; Optimal frame completions.
Advances in Computational Mathematics 40 (2014), 1011-1042.



\bibitem {strawn}
N. Strawn; Optimization over finite frame varieties and structured dictionary design, Appl. Comput. Harmon. Anal. 32 (2012) 413-434.

\bibitem {strawn2} N. Strawn; Finite frame varieties: nonsingular points, tangent spaces, and explicit local parameterizations. J. Fourier Anal. Appl. 17 (2011), no. 5, 821-853.

\bibitem{Wat} G. A. Watson,  The solution of orthogonal Procrustes problems for a family of orthogonally invariant norms. Adv. Comput. Math. 2 , no. 4, 393-405 (1994)

}
\end{thebibliography}
\end{document}